\newtheorem{theorem}{Theorem}[section]
\newtheorem*{theoremA}{Theorem A}
\newtheorem*{theoremB}{Theorem B}
\newtheorem{lemma}[theorem]{Lemma}
\newtheorem{corollary}[theorem]{Corollary}
\theoremstyle{definition}
\newtheorem{definition}[theorem]{Definition}
\theoremstyle{remark}
\newcommand{\Hom}{\operatorname{Hom}}
\newcommand{\Ind}{\operatorname{Ind}}
\newcommand{\Span}{\operatorname{span}}
\newcommand{\conv}{\operatorname{conv}}
\newcommand{\C}{\mathbb{C}}
\newcommand{\R}{\mathbb{R}}
\newcommand{\N}{\mathbb{N}}
\newcommand{\group}[1]{\mathrm{#1}}
\newcommand{\Lie}[1]{\mathfrak{#1}}
\newcommand{\half}{\tfrac{1}{2}}
\newcommand{\Hil}{\mathcal{H}}
\newcommand{\cH}{\mathcal{H}}
\newcommand{\Alpha}{\mathrm{A}}
\newcommand{\norm}[1]{\left\Vert #1 \right\Vert}
\newcommand{\lnorm}{\left\Vert}
\newcommand{\rnorm}{\right\Vert}
\newcommand{\lip}{\left\langle}
\newcommand{\rip}{\right\rangle}
\newcommand{\lset}{\left\lbrace}
\newcommand{\rset}{\right\rbrace}
\newcommand{\lpar}{\left(}
\newcommand{\rpar}{\right)}
\newcommand{\bigglpar}{\biggl(}
\newcommand{\biggrpar}{\biggr)}
\newcommand{\labs}{\left\vert}
\newcommand{\rabs}{\right\vert}
\newcommand{\bigglabs}{\biggl\vert}
\newcommand{\biggrabs}{\biggr\vert}
\def\fn{\ignorespaces\,}
\newcommand{\ds}{\displaystyle}
\def\wrt{\ignorespaces\,\mathrm{d}}
\newcommand{\expe}{\mathrm{e}}
\def\rist{\bigr|_}
\newcommand{\afterbar}[1][x]{\overline{\phantom{#1}}}
\newcommand{\calH}{\mathcal{H}}
\newcommand{\fnspace}[1]{\mathsf{#1}}
\newcommand{\ad}{\operatorname{ad}}
\newcommand{\trace}{\operatorname{trace}}
\renewcommand{\Re}{\operatorname{Re}}
\renewcommand{\Im}{\operatorname{Im}}
\newcommand{\temp}{\mathrm{temp}}
\newcommand{\Her}{\mathrm{Her}}
\newcommand{\MQ}{M_Q}
\title[Decay estimates for matrix coefficients]{Decay estimates for matrix coefficients \\ of unitary representations  \\ of semisimple Lie groups}
\author{Michael G Cowling}
\address{School of Mathematics and Statistics, UNSW Sydney, Sydney NSW 2052, Australia}
\email{m.cowling@unsw.edu.au}
\thanks{The author was partially supported by the Alexander von Humboldt Stiftung and the Australian Research Council DP220100285.}
\thanks{Various individuals answered or asked interesting questions.
In particular thanks are due to Jean-Philippe Anker, Jeff Adams, Roger Howe, Tony Knapp, David Vogan, and Bob Zimmer.
Thanks are also due to the editor handling the paper and the anonymous referee for very helpful and constructive criticism.}
\keywords{$C^*$-algebras, semisimple Lie groups, unitary representations, matrix coefficients.}
\subjclass[2020]{22E45, 22E46, 46L05}
\begin{document}

\begin{abstract}
Let $G$ be a connected semisimple Lie group with finite centre and $K$ be a maximal compact subgroup thereof.
Given a function $u$ on $G$, we define $\mathcal{A} u$ to be the root-mean-square average over $K$, acting both on the left and the right, of $u$.
We take a positive-real-valued spherical function $\phi_{\lambda}$ on $G$, and study the Banach convolution algebra of $\fnspace{C}_c(G)$-functions $u$ with the norm $\norm{u}_{(\lambda)} := \int_G \mathcal{A} u(x) \fn \phi_{\lambda}(x) \,dx$.
The $C^*$ completion of this algebra is an exotic $C^*$-algebra on $G$, in the sense that it lies ``between'' the reduced $C^*$-algebra of $G$ and the full $C^*$-algebra of $G$, and in the sense that it arise as the completion of a star-algebra that does not contain an approximate identity.

Using functional analysis and representation theory, we show that for all unitary representations $\pi$ of $G$, there exists a unique minimal positive-real-valued spherical function $\phi_{\lambda}$ on $G$ such that $\mathcal{A} \lip \pi(\cdot) \xi, \eta\rip  \leq \lnorm \xi \rnorm_{\Hil_\pi} \lnorm \eta \rnorm_{\Hil_\pi}    \phi_{\lambda}$.
This estimate has nice features of both asymptotic pointwise estimates and Lebesgue space estimates; indeed it is equivalent to pointwise estimates
$\labs \lip \pi(\cdot) \xi, \eta\rip \rabs \leq C(\xi, \eta)    \fn\phi_{\lambda}$
for $K$-finite or smooth vectors $\xi$ and $\eta$, and it exhibits different decay rates in different directions at infinity in $G$.
Further, if we assume the latter inequality with  arbitrary $C( \xi, \eta )$, we can prove the former inequality and then return to the latter inequality with explicit knowledge of $C( \xi, \eta )$.
Finally, it holds everywhere in $G$, in contrast to asymptotic estimates which are not global and to $\fnspace{L}^p$ estimates which carry no pointwise information.

\end{abstract}

\maketitle

\section{Introduction}

Throughout in this paper, a \emph{semisimple Lie group} means a noncompact connected real semisimple Lie group $G$ with finite centre.
However, because the methods are quite abstract, the ideas apply more generally, and in particular to the $p$-adic case and to the more general reductive groups considered by Harish-Chandra to enable inductive arguments.

To present our main results, we need some notation; details may be found below.
Let $K$ be a maximal compact subgroup of a semisimple Lie group $G$.
Then $G$ has a Cartan decomposition $K A^+ K$, where $A^+$ is a cone in a simply connected abelian subgroup $A$ of $G$.
We write $\Lie{a}^+$ for the corresponding cone in the Lie algebra $\Lie{a}$ of $A$.
We take $\rho$ to be a particular element of $\Lie{a}^* := \Hom(\Lie{a},\R)$;
each $\lambda$ in $\Lie{a}^*_{\C} := \Hom(\Lie{a},\C)$ induces a homomorphism $\exp H \mapsto \expe^{\lambda(H)}$ from $A$ to $\C$.

Here and later, $\wrt y$ denotes the element of Haar measure on the group over which $y$ varies.

A $K$-bi-invariant function $\phi$ on $G$ is said to be a (zonal) spherical function if and only if
\begin{equation}\label{eq:spherical-functions-0}
\int_K \phi(xky) \wrt k = \phi(x) \fn \phi(y)
\qquad\forall x, y  \in G,
\end{equation}
and a spherical function $\phi$ on $G$ is said to be hermitean if $\phi(x^{-1}) = \bar{\phi}(x)$ for all $x \in G$.
Harish-Chandra gave an integral formula for spherical functions $\phi_\lambda$, where $\lambda \in \Lie{a}^*_{\C}$; these exhaust the spherical functions on $G$.
There is a closed subset $\Lie{a}^{*,\Her}$ of $\Lie{a}^*_{\C}$, which we describe later, such that $\phi_\lambda$ is positive-real-valued and hermitean if and only if $\lambda \in \Lie{a}^{*,\Her}$.
For $\lambda \in (\Lie{a}^*)^+$, the function $\phi_\lambda(\exp H)$ is comparable in the whole of $(\Lie{a}^*)^+$ to the product of an exponential $\expe^{(\lambda-\rho)(H)}$ and a polynomial $p_\lambda(H)$; see Theorem \ref{thm:NPP} for details.

Recall that, given unitary representations $\sigma$ and $\pi$ of $G$, we say that $\sigma$ is weakly contained in $\pi$, written $\sigma \preceq \pi$, if and only if all positive definite functions associated to $\sigma$ may be approximated uniformly on compacta in $G$ by positive definite functions associated to $\pi$.

Given a continuous function $u$ on $G$, we define its \emph{root-mean-square average} $\mathcal{A} u$:
\[
\mathcal{A} u (x)
:= \lpar \int_K \int_K  \labs u(k x k') \rabs^2 \wrt k \wrt k' \rpar^{1/2}
\qquad\forall x \in G.
\]
We now state our main theorems, which were inspired by \cite{Hz70} and \cite{SW18}.

\begin{theoremA}
Let $\pi$ be a unitary representation of a semisimple Lie group $G$, and suppose that $\lambda \in \Lie{a}^{*}$.
Consider the condition
\begin{equation}\label{eq:strong-assumption-1}
\mathcal{A} \lip \pi(\cdot) \xi, \eta\rip
\leq \lnorm \xi \rnorm_{\Hil_\pi} \lnorm \eta \rnorm_{\Hil_\pi}
    \phi_\lambda
\qquad\forall \xi, \eta \in \cH_\pi.
\end{equation}
Then \eqref{eq:strong-assumption-1} holds if and only if all $\sigma \in \bar{G}$ such that $\sigma \preceq \pi$ satisfy \eqref{eq:strong-assumption-1} with $\pi$ replaced by $\sigma$.
\end{theoremA}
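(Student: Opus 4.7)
The plan is to prove each implication separately, using direct-integral methods for ($\Leftarrow$) and Fell's approximation theorem for ($\Rightarrow$); both sides are driven by Minkowski's integral inequality for the $L^2(K\times K)$-norm that underlies $\mathcal{A}$.

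For ($\Leftarrow$), I would invoke that a semisimple Lie group is of type~I to decompose $\pi$ as a central direct integral
\[
\pi \cong \int^{\oplus}_{\bar G} \sigma \, \mathrm{d}\mu(\sigma), \qquad \mathcal{H}_\pi \cong \int^{\oplus} \mathcal{H}_\sigma \, \mathrm{d}\mu(\sigma),
\]
with spectral measure $\mu$ supported on $\{\sigma \in \bar G : \sigma \preceq \pi\}$. Writing $\xi, \eta \in \mathcal{H}_\pi$ as measurable sections $(\xi_\sigma),(\eta_\sigma)$, the matrix coefficient disintegrates as $\langle \pi(g)\xi,\eta \rangle = \int \langle \sigma(g)\xi_\sigma, \eta_\sigma\rangle\, \mathrm{d}\mu(\sigma)$. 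Minkowski's integral inequality then gives $\mathcal{A}\langle \pi(\cdot)\xi,\eta\rangle \leq \int \mathcal{A}\langle \sigma(\cdot) \xi_\sigma, \eta_\sigma\rangle\, \mathrm{d}\mu$, and applying the hypothesis on each $\sigma$ followed by Cauchy--Schwarz in $\mathrm{d}\mu$ produces the bound $\|\xi\| \|\eta\| \phi_\lambda$.

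For ($\Rightarrow$), fix $\sigma \in \bar G$ with $\sigma \preceq \pi$ and $v \in \mathcal{H}_\sigma$. By Fell's direct-sum characterization of weak containment, for any compact $C \ni e$ and $\varepsilon > 0$ there exist $w_1, \dots, w_n \in \mathcal{H}_\pi$ with $\bigl|c_{v,v}^\sigma - \sum_i c_{w_i,w_i}^\pi\bigr| < \varepsilon$ uniformly on $C$; evaluating at $e$ forces $\sum_i \|w_i\|^2 \to \|v\|^2$. Applying \eqref{eq:strong-assumption-1} for $\pi$ to each $c_{w_i,w_i}^\pi$ and using Minkowski in $L^2(K \times K)$ yields $\mathcal{A}\bigl(\sum_i c_{w_i,w_i}^\pi\bigr) \leq \bigl(\sum_i \|w_i\|^2\bigr) \phi_\lambda$; passing to the limit (on any $C$ containing a given $KxK$) delivers the diagonal bound $\mathcal{A} c_{v,v}^\sigma \leq \|v\|^2 \phi_\lambda$ for $\sigma$.

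The main obstacle is promoting this diagonal estimate for $\sigma$ to the full off-diagonal inequality \eqref{eq:strong-assumption-1} with the \emph{sharp} constant $1$. A naive polarization argument, even after optimising over real rescalings $\xi \mapsto t\xi$, $\eta \mapsto t^{-1}\eta$, loses a factor of $2$. I would circumvent this by reinterpreting $\mathcal{A} c_{\xi,\eta}^\sigma(x)$ as the Hilbert--Schmidt norm of the composition $\Phi_\eta^* \sigma(x) \Phi_\xi$, where $\Phi_v \colon L^2(K) \to \mathcal{H}_\sigma$ sends $\phi \mapsto \int_K \phi(k) \sigma(k) v \, \mathrm{d}k$ and satisfies $\|\Phi_v\|_{HS} = \|v\|$; equivalently, by showing that \eqref{eq:strong-assumption-1} for $\pi$ is equivalent to the $C^*$-algebra norm bound $\|\pi(u)\|_{\mathrm{op}} \leq \|u\|_{(\lambda)}$ on $\fnspace{C}_c(G)$, which is automatically preserved under weak containment. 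The latter approach conceptually explains both directions at once and sits naturally with the $C^*$-algebraic framework flagged in the abstract.
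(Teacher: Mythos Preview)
Your direct-integral argument for $(\Leftarrow)$ is correct. The paper does something slightly different but in the same spirit: it uses that $\pi$ is weakly \emph{equivalent} to the direct sum of all irreducible $\sigma\preceq\pi$, and then runs the same approximation argument as in the other direction. Your version is more concrete (and uses type~I), the paper's is more symmetric; both work.

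The $(\Rightarrow)$ direction is where your proposal has a genuine gap. You correctly identify that Fell's characterisation via positive definite functions yields only the diagonal estimate $\mathcal{A}\langle\sigma(\cdot)v,v\rangle\leq\|v\|^2\phi_\lambda$, and that polarisation costs a factor. But neither of your two fixes closes the gap as stated. The Hilbert--Schmidt reformulation is accurate --- indeed $\mathcal{A}\langle\sigma(\cdot)\xi,\eta\rangle(x)=\|\Phi_\eta^*\sigma(x)\Phi_\xi\|_{HS}$ with $\|\Phi_v\|_{HS}=\|v\|$ --- yet it only rewrites the problem: passing from $\|\Phi_v^*\sigma(x)\Phi_v\|_{HS}\leq\|\Phi_v\|_{HS}^2\,\phi_\lambda(x)$ to the off-diagonal bound with constant $1$ is the \emph{same} polarisation obstacle, and $\sigma(x)$ is unitary, not positive, so no operator-monotonicity trick applies. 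Your second fix, showing that \eqref{eq:strong-assumption-1} is equivalent to $\|\pi(f)\|\leq\|f\|_{(\lambda)}$ and then invoking the operator-norm form of weak containment, does work --- but the equivalence relies on $\|\cdot\|_{(\lambda)}$ being a \emph{star}-algebra norm so that the spectral-radius formula (Lemma~\ref{lem:SW}) can be applied, and this requires $\phi_\lambda$ to be hermitean. Theorem~A is stated for arbitrary $\lambda\in\Lie{a}^*$, so at best you would prove the hermitean special case.

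The paper sidesteps the polarisation issue altogether by using a stronger characterisation of weak containment (Arsac): if $\sigma\preceq\pi$ then every $u\in\fnspace{A}_\sigma(G)$ --- not only the diagonal coefficients --- is a locally uniform limit of $v\in\fnspace{A}_\pi(G)$ with $\|v\|_{\fnspace{B}}\leq\|u\|_{\fnspace{B}}$. Since \eqref{eq:strong-assumption-1} for $\pi$ upgrades (via the triangle inequality for $\mathcal{A}$ and Cauchy--Schwarz) to $\mathcal{A}v\leq\|v\|_{\fnspace{B}}\,\phi_\lambda$ for all $v\in\fnspace{A}_\pi(G)$, passing to the limit gives $\mathcal{A}u\leq\|u\|_{\fnspace{B}}\,\phi_\lambda$ for all $u\in\fnspace{A}_\sigma(G)$, and specialising to $u=\langle\sigma(\cdot)\xi,\eta\rangle$ (whose $\fnspace{B}$-norm is at most $\|\xi\|\,\|\eta\|$) finishes the argument with the sharp constant, no hermitean hypothesis needed.
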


In particular, the inequality \eqref{eq:strong-assumption-1} passes to the irreducible representations of $G$ that appear in the direct integral decomposition of $\pi$.

\begin{theoremB}
Let $\pi$ be a unitary representation of a semisimple Lie group $G$, and suppose that $\lambda \in \Lie{a}^{*,\Her}$.
Then the following are equivalent:
\begin{equation}\label{eq:cond-1-1}
\mathcal{A} \lip \pi(x) \xi, \eta\rip
\leq \lnorm \xi \rnorm_{\Hil_\pi} \lnorm \eta \rnorm_{\Hil_\pi}
    \phi_\lambda(x)
\end{equation}
for all $x \in G$ and for all $\xi$ and $\eta$ in $\cH_\pi$, and
\begin{equation}\label{eq:cond-2-1}
\sup_{k,k' \in K} \labs \lip \pi (k x k') \xi, \eta\rip\rabs  \leq C(\xi, \eta) \fn\phi_\lambda(x)
\end{equation}
for all $x \in G$ and all $k,k' \in K$, and for all $K$-finite $\xi, \eta \in \cH_\pi$.
Further, if these conditions hold, then we may take $C(\xi, \eta) $ to be given by
\[
C(\xi, \eta) = \dim(\Span(\pi(K)\xi))^{1/2} \lnorm \xi \rnorm_{\Hil_\pi} \dim(\Span(\pi(K)\eta))^{1/2}  \lnorm \eta \rnorm_{\Hil_\pi},
\]
and further, \eqref{eq:cond-2-1} holds for all smooth $\xi, \eta \in \cH_\pi$.

If moreover $\pi$ is irreducible, then conditions \eqref{eq:cond-1-1} and \eqref{eq:cond-2-1} both hold provided that there exist $\xi$ and $\eta$ in $\cH_\pi \setminus \{0\}$
such that
\begin{equation}\label{eq:cond-3-1}
\mathcal{A} \lip \pi(x) \xi, \eta\rip
\leq C(\xi,\eta)  \fn\phi_\lambda(x)
\qquad\forall x \in G.
\end{equation}
\end{theoremB}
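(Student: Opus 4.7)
My plan is to prove the equivalence between \eqref{eq:cond-1-1} and \eqref{eq:cond-2-1} by combining the Peter--Weyl decomposition of $\pi$ restricted to $K$ with Schur orthogonality on $K$, and then to handle the final clause via the leading-exponent theory for irreducible admissible representations.

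For the direction \eqref{eq:cond-1-1} $\Rightarrow$ \eqref{eq:cond-2-1} with the explicit constant, I take $K$-finite $\xi,\eta$, set $V_\xi := \Span(\pi(K)\xi)$ and $V_\eta := \Span(\pi(K)\eta)$ with orthonormal bases $\{\xi_i\}$ and $\{\eta_j\}$ of dimensions $d_\xi$ and $d_\eta$, and expand
\[
\langle \pi(kxk')\xi, \eta \rangle
= \sum_{i,j} \langle\pi(k')\xi,\xi_i\rangle\,\overline{\langle\pi(k^{-1})\eta,\eta_j\rangle}\, c_{ij}(x),
\qquad c_{ij}(x) := \langle\pi(x)\xi_i,\eta_j\rangle.
\]
Cauchy--Schwarz, together with the Parseval identities $\sum_i|\langle\pi(k')\xi,\xi_i\rangle|^2=\|\xi\|^2$ and $\sum_j|\langle\pi(k^{-1})\eta,\eta_j\rangle|^2=\|\eta\|^2$, yields $|\langle\pi(kxk')\xi,\eta\rangle|^2 \leq \|\xi\|^2\|\eta\|^2 S(x)$, where $S(x):=\sum_{i,j}|c_{ij}(x)|^2$. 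The decisive point is that $S$ is $K$-bi-invariant---it is the squared Hilbert--Schmidt norm of $P_{V_\eta}\pi(x)P_{V_\xi}$, and the $K$-actions on $V_\xi,V_\eta$ are unitary---so
\[
S(x) = \int_{K\times K} \sum_{i,j}|c_{ij}(kxk')|^2\,\wrt k\,\wrt k' = \sum_{i,j}(\mathcal{A}c_{ij})^2(x) \leq d_\xi d_\eta\,\phi_\lambda(x)^2
\]
by applying \eqref{eq:cond-1-1} to the unit vectors $\xi_i,\eta_j$. This gives $C(\xi,\eta)=\sqrt{d_\xi d_\eta}\,\|\xi\|\|\eta\|$. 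For smooth $\xi=\sum_\tau\xi_\tau$ I would exploit that every vector in $\cH_{\pi,\tau}$ is automatically $K$-finite with $\dim\Span(\pi(K)\xi_\tau)\leq d_\tau^2$, and sum the above bound against the rapid decay of $\|\xi_\tau\|$ characteristic of smoothness.

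For \eqref{eq:cond-2-1} $\Rightarrow$ \eqref{eq:cond-1-1}, I first reduce to $K$-finite $(\xi,\eta)$ by density, using the Minkowski-based Lipschitz dependence of $\mathcal{A}\langle\pi(\cdot)\xi,\eta\rangle$ on $(\xi,\eta)$ in the Hilbert norm, uniformly in $x$. For $K$-finite $\xi,\eta$, orthogonality of matrix coefficients of distinct $K$-types in $L^2(K\times K)$ yields $(\mathcal{A}\langle\pi(\cdot)\xi,\eta\rangle)^2 = \sum_{\tau,\sigma}(\mathcal{A}\langle\pi(\cdot)\xi_\tau,\eta_\sigma\rangle)^2$, reducing to the case of $\xi \in V_\tau$, $\eta \in V_\sigma$ lying in single irreducible $K$-copies. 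A further Schur orthogonality computation gives the exact identity $(\mathcal{A}\langle\pi(\cdot)\xi,\eta\rangle)^2(x) = (d_\tau d_\sigma)^{-1}\|\xi\|^2\|\eta\|^2\,S(x)$, so the goal reduces to $S(x)\leq d_\tau d_\sigma\,\phi_\lambda(x)^2$. Since \eqref{eq:cond-2-1} only supplies $|c_{lm}|\leq C_{lm}\phi_\lambda$ with unspecified $C_{lm}$, I would obtain the sharp dimensional constant via the dual formulation $|\langle\pi(u)\xi,\eta\rangle|\leq\|\xi\|\|\eta\|\int_G\mathcal{A}u\,\phi_\lambda\,\wrt x$ for $u\in\fnspace{C}_c(G)$---equivalent to $\pi$ extending to a $*$-representation of the exotic $C^*$-algebra from the abstract---using the $K$-finite pointwise bounds to build this extension and then dualizing back to recover \eqref{eq:cond-1-1}.

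For the final clause concerning irreducible $\pi$ with a nonzero pair satisfying \eqref{eq:cond-3-1}, I would invoke leading-exponent theory (Harish-Chandra, Trombi--Varadarajan, Casselman--Wallach): all $K$-finite matrix coefficients of an irreducible admissible $\pi$ share a common asymptotic determined by its leading exponent, so the averaged bound by $\phi_\lambda$ for one pair forces this exponent to be dominated by $\lambda$. Then \eqref{eq:cond-2-1} propagates to every $K$-finite pair via the $K$-type machinery of the first paragraph, and \eqref{eq:cond-1-1} follows via the previous step. \emph{The main obstacle} is extracting the sharp constant $\|\xi\|\|\eta\|$ in the passage \eqref{eq:cond-2-1} $\Rightarrow$ \eqref{eq:cond-1-1}: going from a qualitative pointwise bound with unspecified $C(\xi,\eta)$ to the sharp averaged estimate is not a purely elementary Cauchy--Schwarz computation, but requires the $C^*$-algebraic framework built around $\|\cdot\|_{(\lambda)}$ and the positive-definiteness of $\phi_\lambda$.
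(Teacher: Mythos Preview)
Your treatment of \eqref{eq:cond-1-1} $\Rightarrow$ \eqref{eq:cond-2-1} with the explicit constant is correct and is exactly the argument the paper invokes (by citation to \cite{CHH89}); likewise the extension to smooth vectors via the $K$-type decomposition with rapidly decaying norms matches the paper's route. For \eqref{eq:cond-2-1} $\Rightarrow$ \eqref{eq:cond-1-1} you correctly identify that the $C^*$-algebraic framework around $\|\cdot\|_{(\lambda)}$ is what produces the sharp constant, but you do not name the actual mechanism. The paper's engine is the spectral radius formula (Lemma~\ref{lem:SW}): from $|\langle\pi(\cdot)\xi,\xi\rangle|\leq C(\xi)\phi_\lambda$ one passes to the GNS representation $\sigma_\xi$ and computes
\[
\|\sigma_\xi(f)\| = \lim_{n\to\infty}\langle\pi((f^**f)^{*n})\xi,\xi\rangle^{1/2n}
\leq \lim_{n\to\infty}\bigl(C(\xi)\|f\|_{(\lambda)}^{2n}\bigr)^{1/2n} = \|f\|_{(\lambda)},
\]
so the unspecified constant $C(\xi)$ disappears under the $2n$th root. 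This is the step that converts qualitative domination into the sharp bound; ``building the extension and dualizing back'' is not enough of a description to count as a proof.

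For the irreducible clause your proposed route via leading-exponent theory diverges from the paper and has a gap. The paper's argument is purely functional analytic and much lighter: from a single pair $(\xi,\eta)$ satisfying \eqref{eq:cond-3-1}, the submultiplicativity $\mathcal{A}(f'*u*f)\leq\mathcal{A}f'*\mathcal{A}u*\mathcal{A}f$ (Lemma~\ref{lem:A-conv}) and the functional equation $\mathcal{A}f'*\phi_\lambda*\mathcal{A}f = c\,\phi_\lambda$ give
\[
\mathcal{A}\langle\pi(\cdot)\pi(f')\xi,\pi(f^*)\eta\rangle \leq C(f,f',\xi,\eta)\,\phi_\lambda
\]
for all $f,f'\in\fnspace{C}_c(G)$; irreducibility makes $\{\pi(f')\xi\}$ and $\{\pi(f^*)\eta\}$ dense, and then Lemma~\ref{lem:B-lambda-fns} (via the spectral radius step above) yields \eqref{eq:cond-1-1}. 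Your approach instead tries to read off the leading exponent from the single bound and propagate via asymptotics, but the asymptotic expansion of Theorem~\ref{thm:asymptotics} is only valid in proper subcones of $\Lie{a}^+$, away from the walls; converting ``leading exponent dominated by $\lambda$'' into a \emph{global} pointwise bound $|\langle\pi(\cdot)\xi',\eta'\rangle|\leq C'\phi_\lambda$ on all of $G$ is precisely the hard analytic content that the convolution argument sidesteps. So the asymptotic route, as stated, is incomplete.
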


From these theorems, it follows that
\begin{enumerate}
  \item[(a)] if $\pi$ is an irreducible unitary representation, then an estimate of the form \eqref{eq:strong-assumption-1} holds, where $\lambda \in \Lie{a}^{*,\Her}$ and $\phi_\lambda$ has the same exponential decay at infinity as the $K$-finite matrix coefficients of $\pi$, and
  \item[(b)] if $\pi$ is a general unitary representation, then an estimate of the form \eqref{eq:strong-assumption-1} holds, and there is an optimal $\phi_\lambda$ where $\lambda \in (\Lie{a}^*)\afterbar$.
      Unless $\pi$ weakly contains the trivial representation,
      $\phi_\lambda(\exp H)$ decays exponentially at infinity in $H$.
\end{enumerate}

Sharp estimates for the spherical functions $\phi_\lambda$, where $\lambda \in \Lie{a}^*$, due to Narayanan, Pasquale and Pusti \cite{NPP14}, may be found in Theorem \ref{thm:NPP} below.

\section{Notation and Background}

In order to go into more detail and prove our results, we need quite a lot of standard notation.
If there is no explicit reference given, the results on abstract harmonic analysis and $C^*$-algebras may be found in standard texts such as \cite{HR63} or \cite{Di60} and those on Lie groups and spherical functions may be found in \cite{He84} or \cite{Kn86}.

\subsection{Unitary representations and $\fnspace{B}(G)$}\label{ssec:B-of-G}

In this section, we let $G$ be a locally compact group.
For (suitable) functions $f$ and $f'$ on $G$, we write $f * f'$ for their usual convolution and $f^*$ for the function $x \mapsto \Delta(x)^{-1} \bar{f}(x^{-1})$, where $\Delta$ is the modular function.

Denote by $\bar G$ the ``set'' of all continuous unitary representations $\pi$ of $G$ on Hilbert spaces $\cH_\pi$, and by $\hat G$ the subset of $\bar G$ consisting of irreducible representations, all modulo unitary equivalence.
(We write ``set'' to point out that care is required; treating this as a set can lead to some unedifying problems in set theory.)

For any unitary representation $\pi$ of $G$ and $f \in \fnspace{L}^1(G)$, we define the operator $\pi(f)$:
\[
\pi(f) := \int_{G} f(x) \fn \pi(x) \wrt x;
\]
the integral converges in the weak operator topology.
Then $\pi(f*f') = \pi(f) \pi(f')$ and $\pi(f^*) = \pi(f)^*$, so that $\pi(\fnspace{L}^1(G))$ is a star-algebra of operators on $\calH_\pi$.

A \emph{matrix coefficient of a unitary representation $\pi$ of $G$} is a function $u$ of the form $\lip \pi(\cdot) \xi, \eta \rip$, that is,
\begin{equation*}
u(x) = \lip \pi(x) \xi, \eta \rip
\quad\forall x \in G,
\end{equation*}
where $\pi\in \bar G$ and  $\xi, \eta \in \cH_\pi$.
We abbreviate this formula to $u = \lip \pi(\cdot) \xi, \eta \rip$.

The \emph{Fourier--Stieltjes algebra} $\fnspace{B}(G)$ is the space of all matrix coefficients of all unitary representations:
\[
\fnspace{B}(G) = \{ u \in C(G) : u = \lip\pi(\cdot) \xi,\eta\rip, \  \pi \in \bar G, \  \xi, \eta \in \cH_\pi \} ;
\]
the same function $u$ may arise in different ways.
Pointwise addition and multiplication of matrix coefficients correspond to direct sums and tensor products of representations.
Finally, we norm $\fnspace{B}(G)$: for $u \in \fnspace{B}(G)$,
\[
\lnorm u \rnorm_\fnspace{B}
:= \min\{ \lnorm \xi\rnorm \lnorm \eta\rnorm :  u = \lip \pi(\cdot) \xi, \eta \rip, \  \pi \in \bar G, \  \xi, \eta \in \cH_\pi \}.
\]
The theory of $C^*$-algebras shows that the minimum is attained.

The set of matrix coefficients of a fixed representation $\pi$ is not \emph{a priori} a vector space, and to allow us to use the tools of functional analysis, we consider the closed linear span of the matrix coefficients in $\fnspace{B}(G)$.
Following Arsac \cite{Ar76}, we define $\fnspace{A}_\pi(G)$ to be the linear space of all infinite linear combinations
\[
\sum_{j\in\N} \lip \pi(\cdot) \xi_j, \eta_j \rip
\qquad\text{such that}\qquad
\sum_{j\in\N} \norm{\xi_j}_{\calH_\pi} \norm{\eta_j}_{\calH_\pi} < \infty;
\]
we norm $\fnspace{A}_\pi(G)$ by setting
\[
\norm{u}_{\fnspace{A}_\pi}
:= \inf\lset \sum_{j\in\N} \norm{\xi_j}_{\calH_\pi} \norm{\eta_j}_{\calH_\pi} : u = \sum_{j\in\N} \lip \pi(\cdot) \xi_j, \eta_j \rip \rset.
\]
Then $\fnspace{A}_\pi(G)$ may be identified with the space of all functions of the form
\[
u(x) = \trace( T \pi(x))
\qquad\forall x \in G,
\]
where $T$ is a trace class operator on $\cH_\pi$, and $\norm{u}_{\fnspace{A}_\pi}$ coincides with the trace norm of $T$.
Alternatively, $\fnspace{A}_\pi(G)$ may be identified with the space of all matrix coefficients of the infinite sum of copies of $\pi$.
The theory of $C^*$-algebras shows that $\lnorm u \rnorm_{\fnspace{A}_\pi}  = \lnorm u \rnorm_\fnspace{B}$ for all $u \in \fnspace{A}_\pi(G)$.
See \cite{Ey64} for more on this.

Weak containment of group representations was developed by Fell \cite{Fe60} and then Arsac \cite{Ar76}, and the following theory is contained in their work.
A unitary representation $\sigma$ of a locally compact group $G$ is \emph{weakly contained} in a unitary representation $\pi$ of $G$ if any one of the following equivalent conditions holds:
\begin{enumerate}
  \item[(a)] $\norm{\sigma(f)} \leq \norm{\pi(f)}$ (the norms here are the operator norms on $\calH_\sigma$ and $\calH_\pi$);
\item[(b)] every positive definite function $\lip\sigma(\cdot)\xi,\xi\rip$ (where $\xi \in \calH$) associated to $\sigma$ is the locally uniform limit of sums $\sum_j\lip\pi(\cdot)\xi_j,\xi_j\rip$ of positive definite functions associated to $\pi$;
\item[(c)] every $u \in \fnspace{A}_\sigma(G)$ is the locally uniform limit of $v \in \fnspace{A}_\pi(G)$ such that $\norm{v}_\fnspace{B} \leq \norm{u}_\fnspace{B}$.
\end{enumerate}
We write $\sigma \preceq \pi$ if $\sigma$ is weakly contained in $\pi$, and say that representations are weakly equivalent if each is weakly contained in the other.
Then every unitary representation $\pi$ is weakly equivalent to the direct sum of all irreducible unitary representations $\sigma$ that are weakly contained in $\pi$.

\subsection*{Discrete series}
Let $\pi$ be an irreducible unitary representation of a locally group $G$.  Then $\pi$ is unitarily equivalent to a subrepresentation of the regular representation of $G$ if and only if one of its nontrivial matrix coefficients lies in $\fnspace{L}^2(G)$, or equivalently, all its matrix coefficients lie in $\fnspace{L}^2(G)$.
When any and hence all of these conditions hold, we say that $\pi$ belongs to the \emph{discrete series} of representations of $G$.

\subsection*{Tempered representations}
We say that a representation $\sigma$ of a locally compact group $G$ is \emph{tempered} if it is weakly contained in the regular representation of $G$.

If $H$ is a closed subgroup of a locally compact group $G$, then we define $\bar{G}_{H,\temp}$ to be the subcollection of $\bar{G}$ of $H$-tempered unitary representations of $G$, that is, the representations of $G$ whose restrictions to $H$ are tempered on $H$.
The following lemma is obvious.

\begin{lemma}\label{lem:h-temp}
Let $H$ be a closed subgroup of a locally compact group $G$ and $\bar{G}_{H,\temp}$  be as defined above.
The collection $\bar{G}_{H,\temp}$ representations is closed under direct sums and integrals, and under taking tensor products with arbitrary unitary representations of $G$.
Every unitary representation of $G$ that is weakly contained in an $H$-tempered representation of $G$ is also $H$-tempered.
If $H_1 \subseteq H_2$, then $\bar{G}_{H_1,\temp} \supseteq \bar{G}_{H_2,\temp}$.
\end{lemma}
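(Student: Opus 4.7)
The plan is to deduce each of the four assertions directly from well-known permanence properties of weak containment, applied to the relation $\pi|_H \preceq \lambda_H$, where $\lambda_H$ denotes the regular representation of $H$. The proof is essentially a bookkeeping exercise once one recalls that weak containment is characterised by local uniform approximation of positive-definite functions (condition (b) of the definition in Section \ref{ssec:B-of-G}), so that restriction to a closed subgroup behaves well.

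First I would handle direct sums and direct integrals: if $\pi_i|_H \preceq \lambda_H$ for each $i$, then $(\bigoplus_i \pi_i)|_H = \bigoplus_i (\pi_i|_H)$, and since a direct sum (or integral) of representations weakly contained in $\lambda_H$ is again weakly contained in $\lambda_H$ (a countable sum of copies of $\lambda_H$ is weakly equivalent to $\lambda_H$), the resulting restriction is tempered. For tensor products with an arbitrary unitary representation $\tau$ of $G$, I would invoke Fell's absorption principle: $\lambda_H \otimes \tau|_H$ is unitarily equivalent to a multiple of $\lambda_H$, so if $\sigma|_H \preceq \lambda_H$ then $(\sigma \otimes \tau)|_H = \sigma|_H \otimes \tau|_H \preceq \lambda_H \otimes \tau|_H \simeq \text{(multiple of } \lambda_H) \preceq \lambda_H$.

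For the statement that weak containment preserves $H$-temperedness, the key observation is that restriction to the closed subgroup $H$ respects weak containment: a positive-definite function on $G$ restricts to a positive-definite function on $H$, and a sequence converging locally uniformly on $G$ converges locally uniformly on $H$. Hence $\pi \preceq \sigma$ implies $\pi|_H \preceq \sigma|_H$, and transitivity of weak containment closes the argument. For the final assertion, if $H_1 \subseteq H_2$ are closed subgroups and $\pi|_{H_2} \preceq \lambda_{H_2}$, then restricting further yields $\pi|_{H_1} \preceq \lambda_{H_2}|_{H_1}$; the classical fact that the regular representation of $H_2$ restricted to a closed subgroup $H_1$ is weakly contained in $\lambda_{H_1}$ (indeed, it decomposes as a multiple of $\lambda_{H_1}$ via a quasi-invariant measure on $H_2/H_1$) then gives $\pi|_{H_1} \preceq \lambda_{H_1}$.

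The only step that is not completely formal is the restriction of the regular representation to a closed subgroup in the last assertion; with modular functions in play one must be mildly careful, but the result is standard and justifies the author's label \emph{obvious}. Everything else is a direct application of the equivalent forms of weak containment recalled in Section \ref{ssec:B-of-G}.
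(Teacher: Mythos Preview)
Your proposal is correct and is precisely the kind of verification the paper has in mind: the paper gives no proof at all, simply declaring the lemma ``obvious'', and the permanence properties of weak containment (closure under direct sums/integrals, Fell absorption for tensor products, compatibility with restriction, and the decomposition of $\lambda_{H_2}|_{H_1}$ as a multiple of $\lambda_{H_1}$) that you invoke are exactly the standard facts that make it so.
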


\subsection*{Structure of semisimple Lie groups}
Lie groups and their Lie algebras are denoted by upper case italic letters and the corresponding lower case fraktur letter.

Let $G$ be a semisimple Lie group.
Its Lie algebra $\Lie{g}$ admits a Cartan involution $\theta$, that is, a nontrivial Lie algebra isomorphism such that $\theta^2 = 1$, and $\Lie{g}$ splits as a direct sum $\Lie{k} \oplus \Lie{p}$, where $\Lie{k}$ is the $+1$ eigenspace and $\Lie{p}$ is the $-1$ eigenspace; $\Lie{k}$ is a subalgebra but $\Lie{p}$ is not.

Let $\Lie{a}$ be a maximal commutative subspace of $\Lie{p}$, of dimension $r$, say; then the endomorphisms $\ad(H)$ of $\Lie{g}$ are simultaneously diagonalisable.
For $\alpha \in \Lie{a}^*$, we define
\[
\Lie{g}_\alpha = \{ X \in \Lie{g} : \ad(H) X = \alpha(H) X \}.
\]
A \emph{(restricted) root} $\alpha$ is a nonzero element of $\Lie{a}^*$ such that $\Lie{g}_\alpha \neq \{0\}$.
Then
\[
\Lie{g} = \Lie{g}_0 \oplus \sum_{\alpha \in \Sigma} \Lie{g}_\alpha,
\]
where $\Sigma$ is the root system formed by all the roots.

The inner product $\lip X, Y \rip := \trace(\ad (X) \ad (\theta Y))$ on $\Lie{g}$ enables us to identify $\Lie{a}$ with $\Lie{a}^*$ and to put an inner product on $\Lie{a}^*$.
Each element $\lambda$ of $\Lie{a}^*$ or $\Lie{a}^*_{\C}$ gives rise to a homomorphism  from $A$ to $\C\setminus\{0\}$:
\[
\exp H \mapsto \expe^{\lambda (H)}.
\]

The hyperplanes $\{ H \in \Lie{a} : \alpha(H) = 0 \}$, where $\alpha \in \Sigma$, divide $\Lie{a}$ into \emph{Weyl chambers}.
We pick an arbitrary chamber, and say that it is positive; we write it as $\Lie{a}^+$.
We equip $\Lie{a}^*$ with a partial order, by defining $\beta \leq \gamma$ if and only if $\beta(H) \leq \gamma(H)$ for all $H \in \Lie{a}^+$.
We write $\Sigma^+$ for the set of positive roots, that is, the roots $\alpha$ such that $0 \leq \alpha$.
We may choose a set $\Delta$ of \emph{simple positive roots} in $\Sigma^+$ such that every positive root is a sum, with nonnegative integer coefficients, of roots in $\Delta$.
Let $\rho = \half \sum_{\alpha \in \Sigma^+} \dim(\Lie{g}_\alpha) \alpha$; then $\rho \in (\Lie{a}^*)^+$, the cone in $\Lie{a}^*$ corresponding to $\Lie{a}^+$ under the identification of $\Lie{a}$ and $\Lie{a}^*$.

The \emph{Weyl group} $W$ is the finite group of orthogonal transformations of $\Lie{a}$ generated by the reflections in the hyperplanes $\{ H \in \Lie{a} : \alpha(H) = 0 \}$, where $\alpha \in \Sigma$; it acts simply transitively on the set of Weyl chambers.
There is an obvious induced action on $\Lie{a}^*$.
In some cases, $W$ contains $-I$ (where $I$ denotes the identity) and in some cases it does not.
When $G$ is simple, $W$ does not contain $-I$ when the root system is of type $A_r$ (where $r \geq 2$), $D_r$ (where $r$ is odd), and $E_6$; see \cite[Exercise 5, page 71]{Hu72}.

We define $\Lie{n} := \sum_{\alpha \in \Sigma^+} \Lie{g}_\alpha$.
This is a Lie algebra because $[ \Lie{g}_\alpha, \Lie{g}_\beta] \subseteq \Lie{g}_{\alpha+\beta}$.
The subgroup $K$ of $G$ with Lie algebra $\Lie{k}$ is a maximal compact subgroup of $G$, and is connected; the subgroup $A$ of $G$ whose Lie algebra is $\Lie{a}$ is a maximal simply connected abelian subgroup; the subgroup $N$ of $G$ whose Lie algebra is $\Lie{n}$ is nilpotent.
We write $A^+$ for the image of $\Lie{a}^+$ in $A$ under the exponential map, $(A^+)\afterbar$ for its closure in $A$, and $M$ for the centraliser of $A$ in $K$.
The groups $M$ and $A$ normalise $N$.

The continuous map $(k,a,k') \mapsto kak'$ from $K \times (A^+)\afterbar \times K$ to $G$ is surjective, that is, every element $x$ of $G$ may be written in the form
\[
x = kak'
\]
where $k, k' \in K$ and $a \in (A^+)\afterbar{}$.
If $x \in K$, then $a$ is the identity, and there is lots of ambiguity in the choice of $k$ and $k'$; otherwise, there is less ambiguity.
In particular, for elements of the form $kak'$ where $a\in A^+$, the element $a$ is uniquely determined while $kak' = k''ak'''$ if and only if there exists $m \in M$ such that $k'' = km$ and $k' = mk'''$.
This is known as the \emph{Cartan decomposition}.
A Haar measure on $G$ is given by
\begin{equation}\label{eq:Haar-measure}
\int_G f(x) \wrt x
= \int_K \int_{\Lie{a}^+} \int_K f(k \exp(H)k') \fn \mathrm{w}(H) \wrt k  \wrt H \wrt k' ,
\end{equation}
where $\mathrm{w}(H) = \prod_{\alpha\in\Sigma^+} (\sinh \alpha(H))^{\dim\Lie{g}_\alpha}$, which may be rewritten as a weighted sum of exponentials.
The dominant term, that is, the biggest exponential on $\Lie{a}^+$, is $\expe^{2\rho (H)}$.

The map $(k,a,n) \mapsto kan$ from $K \times A \times N$ to $G$ is a diffeomorphism.
Hence we may write each $x$ in $G$ uniquely as a product $k(x) a(x) n(x)$, where $k(x) \in K$, $a(x) \in A$ and $n(x) \in N$.
This is known as the \emph{Iwasawa decomposition} of $G$.
The Haar measure on $G$ may be written in terms of the Haar measures on $K$, $A$ and $N$.

The subgroup $MAN$ of $G$ is called a \emph{minimal parabolic subgroup}, and any closed subgroup of $G$ that contains $MAN$ is called a \emph{parabolic subgroup}.
Every such subgroup $Q$ may be parametrised by a subset $\Delta(Q)$ of the set $\Delta$ of simple positive roots:
at the Lie algebra level, $\Lie{q} = \Lie{m} + \Lie{a} + \Lie{n} + \sum_{\alpha \in \Span(\Delta(Q))} \Lie{g}_{\alpha}$; we may write $Q$ as $M_Q A_QN_Q$ where $M_Q$ is reductive and contains $M$, $A_Q \subseteq A$ and $N_Q \subseteq N$.
Further, $M_Q$ and $A_Q$ commute, and both normalise $N_Q$.
Since $Q \supseteq P$, every element of $G$ admits a (not necessarily not unique) decomposition of the form $kq$, where $k \in K$ and $q\in Q$.
For more details, see, for instance, \cite[Section V.5]{Kn86}.

\subsection*{Unitary representations}

We recall that $K$ has a maximal torus $T$, and the irreducible unitary representations of $K$ are parametrised by a lattice in $\Lie{t}^*$, modulo the action of a Weyl group.
The inner product on $\Lie{g}$ induces inner products and norms on $\Lie{k}$ and $\Lie{t}$ and their dual spaces; we write $\norm{\tau}$ for the norm of the parameter in $\Lie{t}^*$ of $\tau$ in $\hat{K}$.
We need two standard results of analysis on compact Lie groups.
See, for instance, \cite[Theorem 7.43 and Proposition 10.6]{Ha15}.

First, the (negative of the) Laplace--Beltrami operator $\Delta_K$ on $K$ is a canonical second order elliptic operator that acts by a scalar $C(\tau)$ on the space of matrix coefficients of the irreducible representation $\tau$ of $K$, and
\begin{equation}\label{eq:Lap-Bel-eigenvalue}
c_1 \norm{\tau}^2 \leq C(\tau) \leq c_2 \norm{\tau}^2  .
\end{equation}
Second, the Weyl dimension formula shows that
\begin{equation}\label{eq:Weyl-dim-formula}
\dim(\cH_\tau) \leq c_3 \norm{\tau}^{(\dim(K) - \dim(T))/2} .
\end{equation}
Here $c_1$, $c_2$ and $c_3$ are constants that depend on $K$.

If $\pi \in \bar G$, then $\pi \rist{K}$, its restriction to $K$, decomposes as a sum of irreducible representations of $K$.
That is, $\cH_\pi = \bigoplus_{\tau \in \hat K} n_\tau \cH_\tau$, and $\pi \rist{K} = \bigoplus_{\tau \in \hat K} n_\tau \tau$.
Let $P_\tau$ be the orthogonal projection of $\cH_\pi$ onto $n_\tau \cH_\tau$.
We say that $\xi \in \cH_\pi$ is \emph{$\tau$-isotypic} if $P_\tau \xi = \xi$, and \emph{$K$-finite} if it is a finite linear combination of isotypic vectors.
In general, the multiplicities $n_\tau$ may be infinite; however, if $\xi$ is $\tau$-isotypic, then the dimension of the space spanned by $\{ \pi(x)\xi : x \in K \}$ is no more than $\dim(\cH_\tau)^2$, as every cyclic representation of a compact group is equivalent to a subrepresentation of the regular representation of the compact group.

A $K$-finite matrix coefficient is a matrix coefficient $\lip \pi(\cdot) \xi, \eta \rip$ where both the vectors are $K$-finite.
The span of the set  of left and right translates by $K$ of a $K$-finite matrix coefficient is finite-dimensional.
If $\cH_\pi^0$ is a dense $\pi(K)$-invariant subspace of $\cH_\pi$, then the subspace of $\cH_\pi^0$ of $K$-finite vectors in $\cH_\pi^0$ is also a dense $\pi(K)$-invariant subspace of $\cH_\pi$.
In general, if $\xi$ is $K$-finite and $x \in G$, then $\pi(x)\xi$ need not be $K$-finite.

A vector $\xi$ in $\cH_\pi$ is said to be \emph{smooth} if the mapping $x \mapsto \pi(x) \xi$ is infinitely differentiable.
If $\xi$ is smooth and $x \in G$, then $\pi(x)\xi$ is also smooth.
The smooth vectors form a dense subspace of $\cH_\pi$.
By identifying the restriction of $\pi$ to the closure of the space spanned by $\{ \pi(x)\xi : x \in K \}$ with a subrepresentation of the regular representation, and using \eqref{eq:Lap-Bel-eigenvalue} and the fact that $\Delta_K^N \xi \in \cH_\pi$ for all $N \in \N$, we see that it is possible to write
\begin{equation}\label{eq:xi-decomp}
\xi =  \bigoplus_{\tau \in \hat{K}} \xi_\tau,
\end{equation}
where $\xi_\tau$ is $\tau$-isotypic and
\begin{equation}\label{eq:smooth-decomp}
\norm{\xi_\tau}_{\cH_\pi} = O(\norm{\tau}^{-N})
\qquad\forall \tau \in \hat{K}.
\end{equation}
As noted above, the $K$-finite smooth vectors are dense in the space of smooth vectors.

\subsection*{Irreducible unitary representations}
For irreducible representations $\pi$ of a semisimple Lie group $G$, we can be more precise:
Harish-Chandra's subquotient theorem (which may be sharpened to a subrepresentation theorem) implies that $n_\tau \leq \dim(\cH_\tau)$ for all
$\tau \in \hat K$.
Similarly, if $\xi$ is $\tau$-isotypic, then $\dim(\Span \pi(K) \xi) \leq \dim(\cH_\tau)$.

Take $\pi \in \hat G$ and $\sigma , \tau \in \hat K$.
Define the smooth function $\Phi_{\sigma,\tau}: A \to \Hom(n_\sigma \cH_\sigma, n_\tau \cH_\tau)$ by
\[
\Phi_{\sigma,\tau}(a) =  P_\tau \pi(a) P_\sigma
\quad\forall a \in A .
\]
If $\xi$ is $\sigma$-isotypic and $\eta$ is $\tau$-isotypic, then
\[
\begin{aligned}
\lip\pi(k a k') \xi,\eta\rip
&=  \sum_{i,j} \lip \theta_i, \pi(k') \xi\rip
\lip \zeta_j, \pi(k^{-1}) \eta\rip \lip \Phi_{\sigma,\tau}(a) \theta_i , \zeta_j \rip \\
&=  \sum_{i,j} \lip \theta_i, \sigma(k') \xi\rip
\lip \tau(k)\zeta_j,  \eta\rip \lip \Phi_{\sigma,\tau}(a) \theta_i , \zeta_j \rip
\end{aligned}
\]
for all $k, k' \in K$ and all $a \in A$, where the $\theta_i$ and $\zeta_j$ form orthonormal bases for the spaces $n_\sigma \cH_\sigma$ and $n_\tau \cH_\tau$.
Thus the collection of matrix-valued functions $\Phi_{\sigma,\tau}$  encapsulates the behaviour of $K$-finite matrix coefficients of $\pi$.

Now we consider the behaviour of matrix coefficients of irreducible representations at infinity.
The following theorem is essentially contained in \cite[Chapter VIII, Section 8]{Kn86}.

\begin{theorem}\label{thm:asymptotics}
Suppose that $\pi \in \hat G$ and $\sigma , \tau \in \hat K$.
Then
\begin{equation}\label{eq:asymptotics-sum}
\Phi_{\sigma,\tau} = \sum_{ l} \Phi_{\sigma,\tau, l} ,
\end{equation}
where for each $l$ there exists a \emph{leading exponent} $\lambda_l \in \Lie{a}^*_\C$ and a nontrivial polynomial $p_l$, independent of $\sigma$ and $\tau$, and a function $\phi_l: \Lie{a} \to \Hom(n_\sigma \cH_\sigma, n_\tau \cH_\tau)$ such that
\begin{equation}\label{eq:asymptotics}
\Phi_{\sigma,\tau,l} (\exp H)
= p_l (H) \fn\expe^{ (\lambda_l - \rho) H} \fn\phi_{l}(H)
\quad\text{as $H \to \infty$}.
\end{equation}
Here $\phi_{l}(H)$ may be written as a convergent power series near $0$ in the variables $\expe^{-\alpha_1(H)}$, \dots, $\expe^{-\alpha_r(H)}$ with nonzero constant term.
The indices $l$ may be chosen such that $\Re \lambda_{1}$ lies in the closure of  $(\Lie{a}^*)^+$, and each $\lambda_l$ is of the form $w\lambda_1$ for some $w \in W$; the $\lambda_l$ do not coincide.
There is a fixed $N$, which depends on $G$, such that $\deg p_l \leq N$.
\end{theorem}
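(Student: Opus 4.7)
The plan is to follow the classical Harish-Chandra / Frobenius approach to the asymptotic behaviour of $K$-finite matrix coefficients on $A^+$, which is laid out in \cite[Chapter VIII]{Kn86}. The first observation is that, because $\pi$ is irreducible, Harish-Chandra's subquotient theorem gives $n_\sigma \leq \dim(\cH_\sigma)$ and $n_\tau \leq \dim(\cH_\tau)$; in particular, $\Phi_{\sigma,\tau}$ takes values in a fixed finite-dimensional space, so the whole problem reduces to the analysis of a matrix-valued smooth function on the abelian group $A$.

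Next I would exploit the centre $Z(\Lie{g}_\C)$ of the universal enveloping algebra. Since $\pi$ is irreducible, every $z \in Z(\Lie{g}_\C)$ acts on the smooth vectors of $\cH_\pi$ by the scalar $\chi_\pi(z)$ (the infinitesimal character). Applying $z$ to a $K$-finite matrix coefficient and restricting to $A^+$ produces a differential equation for $\Phi_{\sigma,\tau}$. Using the Iwasawa splitting $\Lie{g} = \Lie{k} + \Lie{a} + \Lie{n}$ together with $K$-finiteness to replace the $\Lie{k}$-components by their action as $K$-equivariance operators, one obtains the radial part of $z$ as a differential operator on $A$ acting on $\Hom(n_\sigma\cH_\sigma, n_\tau\cH_\tau)$-valued functions. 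After conjugation by the character $\expe^{\rho(H)}$, the resulting system has \emph{regular singularities at infinity}: its coefficients are convergent power series in the variables $\expe^{-\alpha_i(H)}$ with $\alpha_i \in \Delta$, and the leading (constant-term) part is the scalar operator $\gamma(z)$ coming from the Harish-Chandra isomorphism.

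Given such a regular-singular system on the polydisc in the $\expe^{-\alpha_i(H)}$, the Frobenius method produces formal solutions of the form $\expe^{\lambda(H)}\sum_{\mu} c_\mu \expe^{-\mu(H)}$, where $\mu$ ranges over nonnegative integer combinations of simple roots. The indicial equation forces $\lambda$ to satisfy $\gamma(z)(\lambda) = \chi_\pi(z)$ for all $z \in Z(\Lie{g}_\C)$, and under the Harish-Chandra isomorphism the solution set is a single $W$-orbit in $\Lie{a}^*_\C$; one labels its elements as $\lambda_l = w_l\lambda_1$ with $\Re\lambda_1$ chosen in the closure of $(\Lie{a}^*)^+$ and the distinct translates enumerated by $l$. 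The coefficients $c_\mu$ are then found recursively from the higher-order terms of the system, and standard majorant arguments together with the finite-dimensionality of the target show that the resulting series converge on a neighbourhood of infinity in $A^+$, so that $\phi_l(H) := \sum_\mu c_\mu^{(l)} \expe^{-\mu(H)}$ is a convergent power series with nonzero constant term. The extra factor $\expe^{-\rho(H)}$ appearing in \eqref{eq:asymptotics} is precisely the conjugation performed above.

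The main obstacle is the \emph{resonance} phenomenon: when two candidate exponents $w\lambda_1$ and $w'\lambda_1$ differ by a nonzero nonnegative integer combination of simple roots, the naive Frobenius recursion develops singularities. As in the one-variable theory of ODEs with indicial roots differing by an integer, these are absorbed by allowing polynomial prefactors $p_l(H)$ in $H \in \Lie{a}$. The classical bookkeeping (see \cite[Chapter VIII, Section 8]{Kn86}) shows that the degree of each $p_l$ is bounded in terms of the combinatorics of $W$ and $\Sigma^+$ only, hence by a constant $N$ depending on $G$ alone and not on $\pi$, $\sigma$ or $\tau$. Once the full formal expansion has been constructed and its convergence established, direct estimates on each $\phi_l$ yield the asymptotic expansion \eqref{eq:asymptotics-sum}--\eqref{eq:asymptotics} as $H \to \infty$ in $\Lie{a}^+$.
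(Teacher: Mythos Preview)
The paper does not prove this theorem at all: it is stated as background, with the remark that it ``is essentially contained in \cite[Chapter VIII, Section 8]{Kn86}''. Your outline is a faithful sketch of precisely the Harish-Chandra/Frobenius argument carried out in that reference (radial reduction via $Z(\Lie{g}_\C)$, regular-singular system in the variables $\expe^{-\alpha_i(H)}$, indicial roots forming a $W$-orbit, resonance handled by polynomial prefactors of bounded degree), so your approach is the same as the one the paper appeals to.
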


In the theorem, the expression $H \to \infty$ means that $|H| \to \infty$ and $H$ \emph{stays away from the walls of the Weyl chamber}, that is, $H$ is constrained to lie in a proper open subcone $\Lie{c}$ of $\Lie{a}^+$.
It is easier to handle many analytic phenomena in $G$ when one does this.
Similarly, when $\lambda_1$ does not lie on a wall of the Weyl chamber, there are exactly $|W|$ summands in \eqref{eq:asymptotics-sum}, and the polynomial terms are constants.
When $\pi$ is tempered, the $\lambda_l$ are purely imaginary or negative.
We are interested in nontempered representations; in this case, some of the exponential terms decay much faster than others.
The parameters $\lambda_l$ control the decay rate of the $K$-finite matrix coefficients.

The asymptotic expansion was used (particularly by Langlands and Knapp) to provide the following partial \emph{Langlands classification} of irreducible unitary representations of a semisimple Lie group $G$.
See \cite[Chapter XIV, Section 17]{Kn86} for more details.

\begin{theorem}\label{thm:Langlands-Knapp-clsfctn}
Suppose that $\pi$ is an irreducible representation of a semisimple Lie group $G$.
Then there is a parabolic subgroup ${Q} = M_{Q}A_{Q}N_{Q}$ of $G$, a discrete series representation $\sigma$ of $M_{Q}$ and a not necessarily unitary character $\mu: a \mapsto \expe^{\lambda_1\log(a)}$ of $A_{Q}$ such that $\pi$ may be identified with a quotient of the induced representation $\Ind_{Q}^G \sigma \otimes \mu \otimes \iota$; here $\iota$ is the trivial representation of $N_{Q}$.
\end{theorem}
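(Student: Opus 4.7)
The plan is to deduce the classification from the asymptotic expansion of $K$-finite matrix coefficients in Theorem~\ref{thm:asymptotics}, by realizing the contragredient $\pi^\vee$ as a subrepresentation of a suitable induced representation and then dualizing. First, among the indices in \eqref{eq:asymptotics-sum} I would single out the leading exponent $\lambda_1 \in \Lie{a}^*_{\C}$ with $\Re \lambda_1$ in the closure of $(\Lie{a}^*)^+$; up to a Weyl translation, this is well defined. The parabolic $Q = \MQ A_Q N_Q$ is then forced on us: take $\Delta(Q)$ to be the set of simple roots on which $\Re \lambda_1$ vanishes. Under the induced splitting $\Lie{a} = \Lie{a}_Q \oplus \Lie{a}^Q$ with $\Lie{a}^Q \subseteq \Lie{m}_Q$, the restriction $\lambda_1\rist{\Lie{a}_Q}$ is strictly dominant, and $\mu$ is the character $a \mapsto \expe^{\lambda_1(\log a)}$ of $A_Q$.

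Next I would extract a candidate representation $\sigma$ of $\MQ$ by isolating the leading term of the asymptotics along $A_Q$. For $K$-finite vectors $\xi, \eta \in \cH_\pi$ in appropriate isotypic subspaces, and any $H_0 \in \Lie{a}_Q$ strictly dominant for the roots outside $\Delta(Q)$, the limit
\[
F_{\xi,\eta}(m) := \lim_{t \to \infty} \expe^{-(\lambda_1 - \rho)(tH_0)} \lip \pi(m \exp(tH_0)) \xi, \eta \rip
\qquad m \in \MQ
\]
exists by the uniform expansion \eqref{eq:asymptotics}, and is nonzero for some choice of $\xi, \eta$ because the leading coefficient $\phi_1$ in Theorem~\ref{thm:asymptotics} is nontrivial. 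Standard manipulations of the Harish-Chandra series, together with the fact that the leading term $\phi_1(H)$ has a power-series expansion in the $\expe^{-\alpha_j(H)}$, show that each $F_{\xi,\eta}$ is a $K \cap \MQ$-finite matrix coefficient of some admissible Harish-Chandra module $\sigma$ on $\MQ$, extended to $\MQ A_Q$ with $A_Q$ acting through $\mu$.

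The main obstacle is verifying that $\sigma$ is actually a discrete series representation of $\MQ$. The $K \cap \MQ$-finite matrix coefficients of $\sigma$ inherit an asymptotic expansion of the form \eqref{eq:asymptotics} on $\MQ$, whose leading exponents are the projections onto $(\Lie{a}^Q)^*_\C$ of the Weyl translates of $\lambda_1$ that do not contribute to the $\Lie{a}_Q$-direction. The maximality of $\Re \lambda_1$ and the defining property of $\Delta(Q)$ force each of these projections to satisfy the Harish-Chandra $\fnspace{L}^2$-criterion for $\MQ$; combined with the polynomial bound $\deg p_l \leq N$ in Theorem~\ref{thm:asymptotics}, this places all $K \cap \MQ$-finite matrix coefficients of $\sigma$ in $\fnspace{L}^2(\MQ)$, so $\sigma$ is in the discrete series. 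Once $\sigma$ is so identified, Frobenius reciprocity converts the assignment $(\xi,\eta) \mapsto F_{\xi,\eta}$ into a nonzero intertwiner $\pi^\vee \to \Ind_Q^G (\sigma^\vee \otimes \mu^{-1} \otimes \iota)$; irreducibility of $\pi^\vee$ promotes this to an embedding, and taking contragredients (using admissibility) realizes $\pi$ as a quotient of $\Ind_Q^G (\sigma \otimes \mu \otimes \iota)$, yielding the Langlands parameters $(Q,\sigma,\mu)$.
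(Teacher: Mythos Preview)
The paper does not actually prove this theorem: it is stated as background and attributed to Langlands and Knapp, with the pointer ``See \cite[Chapter XIV, Section 17]{Kn86} for more details.'' So there is no proof in the paper to compare against; your sketch is an attempt to reconstruct the classical argument.

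Your outline follows the standard route (extract the leading exponent, form the associated parabolic, take a boundary value/Jacquet-type limit along $A_Q$, and apply Frobenius reciprocity), and most of it is reasonable as a plan. There is, however, a genuine gap at the step where you claim $\sigma$ is a \emph{discrete series} representation of $\MQ$. The construction of $Q$ from $\Re\lambda_1$ ensures that $\Re\lambda_1$ restricted to $\Lie{a}^Q$ vanishes; feeding this into the asymptotics of the boundary-value representation on $\MQ$ gives leading exponents whose real parts lie on the \emph{wall}, i.e.\ $\sigma$ is \emph{tempered}, not that its $K\cap\MQ$-finite matrix coefficients are in $\fnspace{L}^2(\MQ)$. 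Temperedness is exactly what Langlands' original argument yields; upgrading ``tempered'' to ``discrete series of a possibly smaller Levi'' is a separate and substantial theorem (Knapp--Zuckerman), and only after invoking it and using induction in stages do you arrive at the form stated here with $\sigma$ in the discrete series. Your sentence ``the maximality of $\Re\lambda_1$ and the defining property of $\Delta(Q)$ force each of these projections to satisfy the Harish-Chandra $\fnspace{L}^2$-criterion for $\MQ$'' is precisely where this conflation occurs: maximality and the choice of $\Delta(Q)$ give real parts $\leq 0$, not $<0$.

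A smaller point: your limit $F_{\xi,\eta}(m)$ as written presupposes that the polynomial $p_1$ in \eqref{eq:asymptotics} is constant; otherwise the normalisation $\expe^{-(\lambda_1-\rho)(tH_0)}$ is off by a power of $t$ and the limit does not exist. You would need to divide by $p_1(tH_0)$ as well, or restrict to the generic case where $\lambda_1$ is regular.
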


The induced representation is a completion of the left translation representation of $G$ on the space of smooth $\cH_\sigma$-valued functions $f$ on $G$ with the invariance property that
\begin{equation}\label{eq:induced-rep-invariance}
f(xman)
= \sigma(m)^{-1} \expe^{-(\lambda_1-\rho_Q)(\log(a)) } f(x) ;
\end{equation}
here $\rho_Q \in \Lie{a}^*$ is defined by $\rho_Q = \half \sum_{\alpha \in \Sigma^+ \setminus \Sigma(Q)} \dim(\Lie{g}_\alpha) \alpha$.
Note that if $f$ and $f'$ have this property, then $\lip f(km), f'(km) \rip_{\cH_\sigma} = \lip f(k), f'(k) \rip_{\cH_\sigma}$ for all $k \in K$ and all $m \in M_Q$.


This result reduces the classification of irreducible unitary representations of $G$ to the question whether there is a suitable inner product on the space above relative to which the translations act unitarily.
At the time of writing of this paper, there is no complete description of when such an inner product exists.
However, necessary conditions on $\lambda_1$ are known, and in particular there must be an element $w$ of $W$ that normalises $A_Q$ and satisfies $w\Re\lambda_1 = -\Re\lambda_1$ (see \cite[Theorem 16.6]{Kn86}).

The inner product, when it exists, may be written as
\[
\lip f, f' \rip = \int_{K} \lip f(k), \mathcal{C} f'(k)\rip_{\cH_\sigma} \wrt k,
\]
where $\mathcal{C}$ is the operator of convolution with a distribution on $K$.
The representation $\pi$ is a proper quotient of the induced representation if and only if $\mathcal{C}$ has a nontrivial kernel.
This implies that many matrix coefficients $u$ of $\pi$ may be written as an integral of the form
\begin{equation}\label{eq:LK-matrix-coeff}
u(x) = \int_{K} \lip  f(x^{-1} k) , f'(k) \rip_{\cH_\sigma}  \wrt k ,
\end{equation}
where $f$ and $f'$ are functions satisfying the invariance condition above.
The remark after \eqref{eq:induced-rep-invariance} implies that we are effectively dealing with an integral on $K/ K \cap M$.

For irreducible representations, $K$-finite vectors are smooth. 

\subsection*{Zonal spherical functions}

The space $\fnspace{L}^1(G)^\natural$ of integrable $K$-bi-invariant functions on a semisimple Lie group $G$ forms a commutative star-algebra.
The Gel$'$fand space of multiplicative linear functionals on $\fnspace{L}^1(G)^\natural$ may be identified with the set of all bounded spherical functions on $G$, that is,
\[
\int_G f*f'(x) \fn \phi_\lambda(x) \wrt x
= \int_G f(x) \fn \phi_\lambda(x) \wrt x
\times \int_G f'(x') \fn \phi_\lambda(x') \wrt x'
\]
for all $f, f'$ in $\fnspace{L}^1(G)^\natural$ and all bounded spherical functions $\phi_\lambda$.
This formula coupled with some simple changes of variable in $G$ implies that
\begin{equation}\label{eq:functional}
f * \phi_\lambda(x) = f * \phi_\lambda(e) \fn\phi_\lambda(x)
\qquad\text{and}\qquad
\phi_\lambda * f(x) = \phi_\lambda * f(e) \fn\phi_\lambda(x)
\end{equation}
for all $x \in G$; here $e$ denotes the identity element of $G$.

The spherical functions $\phi_\lambda$ are all matrix coefficients of not necessarily unitary representations of $G$, the so-called class one representations, that is, representations which have nontrivial $K$-invariant vectors.

Harish-Chandra's integral formula for the zonal spherical functions (almost) states that
\begin{equation}\label{eq:HC-spherical-fn-int-formula}
\phi_\lambda( g) = \int_{K}   \expe^{-(\lambda -\rho) (H(g^{-1}k))} \wrt k,
\end{equation}
where $x = k(x) \exp(H(x)) n(x)$ is the Iwasawa decomposition of $x \in G$.
From this formula and the asymptotic behaviour of spherical functions (a particular case of the behaviour considered above), it follows that $\phi_\lambda$ is positive-real-valued if and only if $\lambda \in \Lie{a}^*$.
Next, the intrinsic symmetries in semisimple groups mean that $\phi_{\lambda} = \phi_{w\lambda}$ for all $w \in W$.
Thus to parametrise positive-real-valued spherical functions, it suffices to consider $\lambda$ in $((\Lie{a}^*)^+)\afterbar$, the closure of $(\Lie{a}^*)^+$, and ambiguity in the parametrisation arises only if $\lambda$ lies on the boundary of this region.

We shall use spherical functions to measure decay rates, and so it is of interest to be able to compare and estimate spherical functions.
In the next theorem, $\mathrm{conv}(W \mu)$ denotes the convex hull in $\Lie{a}^*$ of the set $\{ w \mu: w \in W\}$, and $\phi_\lambda \leq \phi_\mu$ means that $\phi_\lambda(x) \leq \phi_\mu(x)$ for all $x \in G$.

\begin{theorem}\label{thm:compare-phis}
Suppose that $\lambda, \mu \in \Lie{a}^*$.
Then $\phi_\lambda \leq \phi_\mu$ if and only if $\lambda \in \mathrm{conv}(W \mu)$.
\end{theorem}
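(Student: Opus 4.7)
The plan is to convert the pointwise inequality between spherical functions into a support-function inequality on $\Lie{a}^*$, and then invoke Hahn--Banach separation.

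For the sufficiency of $\lambda \in \conv(W\mu)$, I would read off from Harish-Chandra's integral formula \eqref{eq:HC-spherical-fn-int-formula} that $\phi_\lambda(g)$ is, for each fixed $g$, an average over $K$ of exponentials $\expe^{-(\lambda-\rho)(H(g^{-1}k))}$ whose exponents are affine in $\lambda$.  Since the exponential of an affine function is convex, so is $\lambda\mapsto\phi_\lambda(g)$, and this convex function is $W$-invariant because $\phi_\lambda = \phi_{w\lambda}$.  Writing $\lambda = \sum_i t_i w_i \mu$ and applying Jensen's inequality then yields $\phi_\lambda(g) \leq \sum_i t_i \phi_{w_i\mu}(g) = \phi_\mu(g)$ for every $g \in G$.

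For the converse, I would first use $W$-invariance to assume that $\lambda$ and $\mu$ both lie in $((\Lie{a}^*)^+)\afterbar$, and then extract exponential rates at infinity.  Theorem \ref{thm:NPP} supplies the sharp two-sided bound $\phi_\lambda(\exp H) \asymp p_\lambda(H) \fn \expe^{(\lambda-\rho)(H)}$ on $(\Lie{a}^+)\afterbar$, so that $\lim_{t\to\infty} t^{-1}\log \phi_\lambda(\exp tH) = (\lambda - \rho)(H)$ for every $H \in \Lie{a}^+$.  Applying the hypothesis at $\exp(tH)$ and passing to the limit yields $\lambda(H) \leq \mu(H)$ for all $H \in \Lie{a}^+$, and then for all $H \in (\Lie{a}^+)\afterbar$ by continuity.

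Finally, I would translate this into a convex-geometric conclusion.  For $\nu \in ((\Lie{a}^*)^+)\afterbar$ and any $w \in W$, the vector $H - w^{-1}H$ is a nonnegative combination of positive roots (a standard length induction on Weyl group elements), so $(w\nu)(H) \leq \nu(H)$ for all $H \in (\Lie{a}^+)\afterbar$; equivalently, the support function $h_{W\nu}(H) := \max_{w \in W}(w\nu)(H)$ coincides with $\nu(H)$ on the closed chamber.  Hence the inequality $\lambda(H) \leq \mu(H)$ upgrades to $h_{W\lambda}(H) \leq h_{W\mu}(H)$ on $(\Lie{a}^+)\afterbar$, and since support functions of $W$-orbits are themselves $W$-invariant, it extends to all of $\Lie{a}$.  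A Hahn--Banach separation in the finite-dimensional space $\Lie{a}^*$ then forces $\conv(W\lambda) \subseteq \conv(W\mu)$, so $\lambda \in \conv(W\mu)$.

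The main obstacle I foresee is the clean extraction of the exponential rate of $\phi_\lambda$ uniformly in the directions of $\Lie{a}^+$: a weak one-sided bound would allow a polynomial factor to swamp the exponential on some rays near the walls.  The two-sided comparison of Theorem \ref{thm:NPP} is precisely designed to handle this, and I would lean on it rather than attempt a direct analysis from the asymptotic expansion of Theorem \ref{thm:asymptotics}, whose leading coefficients can vanish on the walls.
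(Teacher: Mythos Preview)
Your argument is correct, but both halves differ from the paper's.

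For the sufficiency direction, the paper argues via complex interpolation: from the integral formula one has $|\phi_\lambda| \leq \phi_{\Re\lambda}$, and then the maximum principle applied to the analytic family $z \mapsto \phi_{\lambda + z\lambda'}$ yields the convexity along real segments. You instead observe directly that $\lambda \mapsto \phi_\lambda(g)$ is a $K$-average of exponentials of affine functions, hence convex, and conclude by Jensen together with $W$-invariance. Your route is shorter and avoids complex analysis; the paper's route has the side benefit of simultaneously producing the inequality $|\phi_\lambda| \leq \phi_{\Re\lambda}$ for complex $\lambda$, which is used elsewhere.

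For the necessity direction, the paper simply invokes the asymptotic expansion of Theorem~\ref{thm:asymptotics}: if $\lambda \notin \conv(W\mu)$ then some leading exponent of $\phi_\lambda$ dominates every leading exponent of $\phi_\mu$ along a suitable direction, so $\phi_\lambda \not\leq \phi_\mu$. You instead extract the exponential rate from the two-sided bounds of Theorem~\ref{thm:NPP}, obtain $\lambda(H) \leq \mu(H)$ on the closed chamber, and then run a support-function argument to upgrade this to $\conv(W\lambda) \subseteq \conv(W\mu)$. Your version makes the convex-geometric content explicit and sidesteps the wall issues in the asymptotic expansion, at the cost of importing the sharper external result of \cite{NPP14}; the paper's version is terser and stays within the Harish-Chandra expansion already set up. Note that Theorem~\ref{thm:NPP} is quoted from \cite{NPP14} and does not depend on the present theorem, so there is no circularity.
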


\begin{proof}
On the one hand, from the integral formula \eqref{eq:HC-spherical-fn-int-formula}, $|\phi_\lambda| \leq \phi_{\Re(\lambda)}$ for all $\lambda\in \Lie{a}^*_{\C}$.
It then follows that if $\lambda, \lambda' \in \Lie{a}^*$, then
\[
|\phi_{\lambda + z \lambda'}| \leq \max\{ \phi_\lambda, \phi_{\lambda +\lambda'} \} ,
\]
first if $\Re(z)$ is either $0$ or $1$ and then if $0 \leq \Re(z) \leq 1$ by the maximum principle, applied pointwise.
Iterating this interpolation result yields one direction of the theorem.

On the other hand, if $\lambda \notin \mathrm{conv}(W \mu)$, then the formula \eqref{eq:asymptotics} for the asymptotic behaviour of matrix coefficients, applied to the spherical functions, which are matrix coefficients of class one representations of $G$, shows that $\phi_\lambda \not\leq \phi_\mu$.
\end{proof}

\begin{theorem}[{see \cite[Theorem 3.4]{NPP14}}]\label{thm:NPP}
Suppose that $\lambda \in ((\Lie{a}^*)^+)^-$, let $\Sigma_\lambda$ be the set of all positive roots $\alpha$ such that $\lip\lambda, \alpha \rip = 0$, and define the polynomial $p_\lambda$ on $\Lie{a}$ by
\[
p_\lambda(H) =
\prod_{\alpha \in \Sigma_\lambda } (1 +  \alpha(H) )
\qquad\forall H \in \Lie{a}.
\]
Then there are constants $C_1$ and $C_2$, depending on $\lambda$, such that
\[
C_1 \fn p_\lambda(H) \expe^{(\lambda -\rho) (H)}
\leq \phi_\lambda(\exp H)
\leq C_2 \fn p_\lambda(H) \expe^{(\lambda -\rho) (H)}
\qquad\forall H \in (\Lie{a}^+)^-.
\]
\end{theorem}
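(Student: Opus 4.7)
The plan is to derive the estimate from Harish--Chandra's classical series expansion
\[
\phi_\lambda(\exp H)
= \sum_{w \in W} c(w\lambda) \fn \expe^{(w\lambda - \rho)(H)} \fn \Phi(w\lambda, H)
\qquad\forall H \in \Lie{a}^+,
\]
valid for regular $\lambda$; here $c$ is Harish--Chandra's $c$-function and $\Phi(\mu, H) = 1 + \sum_{\nu \neq 0} a_\nu(\mu) \expe^{-\nu(H)}$ is the Gangolli expansion in the variables $\expe^{-\alpha(H)}$, which converges uniformly and tends to $1$ as $H$ drifts to infinity inside any proper subcone of $\Lie{a}^+$. For singular $\lambda$ the individual $c$-factors develop poles that exactly cancel zeros introduced when several Weyl translates of $\lambda$ coincide, so one instead passes to an appropriate limiting form in which those cancellations are carried out in advance.

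My first step is to identify the dominant exponential. For $H \in (\Lie{a}^+)\afterbar$ and $\lambda \in ((\Lie{a}^*)^+)\afterbar$ one has $(\lambda - w\lambda)(H) \geq 0$ for every $w \in W$, with equality exactly on the stabiliser $W_\lambda := \{ w \in W : w\lambda = \lambda\}$, which is the subgroup generated by the reflections $s_\alpha$ with $\alpha \in \Sigma_\lambda$. Hence in the expansion only the terms indexed by $w \in W_\lambda$ contribute at the top order $\expe^{(\lambda - \rho)(H)}$; all remaining terms decay strictly faster, with the decay rate controlled by the distance of $H$ from the walls of $\Lie{a}^+$. To extract the correct coefficient of the dominant exponential, I would evaluate $\lim_{\mu \to \lambda}\sum_{w \in W_\lambda} c(w\mu) \expe^{(w\mu-\rho)(H)}$ by applying L'H\^opital's rule once for each $\alpha \in \Sigma_\lambda$: the Gindikin--Karpelevich product formula for $c$ isolates the singular $c$-factors, and each differentiation in the $\alpha$-direction of $\expe^{w\mu(H)}$ under the antisymmetrisation over $W_\lambda$ produces a factor $\alpha(H)$, so the limit is a positive constant multiple of $\prod_{\alpha \in \Sigma_\lambda} \alpha(H) \fn \expe^{(\lambda - \rho)(H)}$.

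This yields the asymptotic $\phi_\lambda(\exp H) \asymp \prod_{\alpha \in \Sigma_\lambda}\alpha(H) \fn \expe^{(\lambda - \rho)(H)}$ when $H$ tends to infinity away from all walls. To convert this into the global two-sided inequality on $(\Lie{a}^+)\afterbar$ asserted in the theorem, I would interpolate between this regime and the regime in which some $\alpha(H)$ with $\alpha \in \Sigma_\lambda$ stays bounded; in the latter regime positivity and continuity of $\phi_\lambda$ give the control needed. The combined effect is to replace the monomial $\prod_{\alpha \in \Sigma_\lambda}\alpha(H)$ by $\prod_{\alpha \in \Sigma_\lambda}(1 + \alpha(H)) = p_\lambda(H)$ in both the upper and lower bounds simultaneously. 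The hardest part will be making the L'H\^opital/residue computation uniform in $H$ and verifying that no cancellation suppresses the explicit factor $\prod_{\alpha \in \Sigma_\lambda}\alpha(H)$ — one must track precisely which simple reflections act on which $c$-factors on each wall, and this combinatorial bookkeeping is where the work of Narayanan, Pasquale and Pusti is the most delicate.
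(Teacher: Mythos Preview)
The paper does not prove this theorem at all: it is stated with the attribution ``see \cite[Theorem 3.4]{NPP14}'' and used as a black box, so there is no proof in the paper to compare your attempt against. Your outline is a reasonable sketch of the strategy that Narayanan, Pasquale and Pusti actually follow (Harish--Chandra series expansion, Gindikin--Karpelevich product for the $c$-function, careful limiting procedure at singular $\lambda$, and then a separate argument near the walls), and you correctly flag that the uniform control near the walls and the residue bookkeeping are where the real work lies; but none of that is reproduced in the present paper, which simply quotes the result.
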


Finally, we are interested in star-linear functionals on $\fnspace{L}^1(G)^\natural$, that is, in hermitean spherical functions.
Kostant \cite{Ko69} showed that $\phi_\lambda$ is hermitean if and only if there exists $w \in W$ such that $w\lambda = -\bar\lambda$, that is, $w \Re(\lambda) = -\Re(\lambda)$ and $w \Im(\lambda) = \Im(\lambda)$.

As noted above, in some semisimple Lie groups, there exists $w$ such that $w\lambda = -\lambda$ for all $\lambda \in \Lie{a}^*$, and all $\phi_\lambda$ are hermitean when $\lambda$ is real.
In others, the set of real $\lambda$ for which $\phi_\lambda$ is hermitean is a union of subspaces of dimension strictly less than $r$.
However, in all cases $\phi_\rho$ is the constant function $1$, and $\phi_{t\rho}$ is hermitean for all real $t$ and bounded if and only if $|\Re(t)| \leq 1$.
If $\phi_\lambda$ is positive definite and positive-real-valued, then $\phi_\lambda$ is \emph{a fortiori} hermitean, and so in some cases, the set of $\lambda$ for which $\phi_\lambda$ is positive definite is a union of subsets of dimension strictly less than $r$.
As noted above, if $\pi$ is an irreducible unitary representation of $G$, and $\lambda$ is a leading exponent in the asymptotic expansion of the $K$-finite matrix coefficients of $\pi$, as in Theorem \ref{thm:asymptotics}, then $\phi_\lambda$ is a hermitean positive-real-valued spherical function.

Generally, $\Xi$ is used rather than $\phi_0$.
We prefer $\phi_0$, as this notation emphasises that we are dealing with just one of many positive-real-valued spherical functions.

\subsection*{History of growth estimates for matrix coefficients}

If $G$ is an amenable locally compact group, then its Fourier algebra $\fnspace{A}(G)$, that is, the set of matrix coefficients of the regular representation of $G$ on $\fnspace{L}^2(G)$, which may be appropriately normed (see \cite{Ey64}), contains an approximate identity for multiplication.
There are many ways in which we can make this precise; one is to affirm that there exists a net of functions $(u_\alpha: \alpha \in \Alpha)$ such that $\lnorm u_\alpha \rnorm_{\fnspace{A}} \leq 1$ and
$u_\alpha \to 1$ locally uniformly on $G$.
In particular, if $G$ is not compact, then $\lnorm u_\alpha \rnorm_q \to \infty$ for all finite $q$.
There are many results on amenability of this kind; see, e.g., \cite{Gr69}, \cite{Pi84}.

We now suppose that $\pi$ is an irreducible unitary representation of a semisimple Lie group $G$.
Theorem \ref{thm:asymptotics}, coupled with \eqref{eq:Haar-measure}, suggests that the matrix coefficient $\lip \pi(\cdot) \xi, \eta \rip$ belongs to $\fnspace{L}^{q}(G)$ for certain $q \in [2,\infty]$; the asymptotic expression breaks down as $H$ approaches the walls of $\Lie{a}^+$, and so more information is needed to draw this conclusion in the higher rank case, but the conclusion is still correct.
The unsatisfactory aspect of this observation is that the asymptotic expansion does not hold for all matrix coefficients, nor for all unitary representations.
It would be nice to have estimates of the form
\[
\labs \lip \pi(k_1\exp(H)k_2) \xi, \eta \rip \rabs
\leq \lnorm\xi\rnorm_{\Hil_\pi} \lnorm\eta\rnorm_{\Hil_\pi} p(H) \fn\expe^{ (\Re\lambda - \rho)(H)}
\]
for all $H \in \Lie{a}^+$, all $k_1, k_2 \in K$, and all vectors $\xi, \eta \in \calH_\pi$.
Unfortunately, such estimates are impossible---a translate of a matrix coefficient is another matrix coefficient and the new vectors have the same norms as the old vectors, but the ``bump'' where the matrix coefficient is ``large'' can be moved out to infinity, contradicting this decay estimate.
It is possible to give estimates for $K$-finite vectors---indeed, Howe \cite{Ho82} does just this with his concept of $(\Phi, \Psi)$-boundedness.

The Kunze--Stein phenomenon (see \cite{C78}) shows that matrix coefficients of the regular representation belong to $\fnspace{L}^{2+}(G)$, that is, they belong to $\fnspace{L}^{2+\epsilon}(G)$ for all $\epsilon \in \R^+$.
Conversely (see \cite{CHH89}), if sufficiently many matrix coefficients of a unitary representation $\pi$  belong to $\fnspace{L}^{2+}(G)$, then all matrix coefficients do.


To each nontrivial irreducible unitary representation $\pi$ of a semisimple Lie group $G$ on a Hilbert space $\Hil_\pi$, there exist $q \in [2,\infty)$ and a constant $C(\epsilon)$ (both depending on $\pi$) such that
\begin{equation}\label{eq:Lp+-estimate}
\lnorm \lip \pi(\cdot) \xi, \eta \rip \rnorm_{q + \epsilon}
        \leq C(\epsilon) \lnorm\xi\rnorm_{\Hil_\pi} \lnorm\eta\rnorm_{\Hil_\pi}
\qquad\forall \xi, \eta \in \Hil_\pi
\end{equation}
for all positive $\epsilon$ (see \cite{C79}).
We abbreviate this condition to $\lip \pi(\cdot) \xi, \eta \rip  \in \fnspace{L}^{q+}(G)$.
When \eqref{eq:Lp+-estimate} holds only for some $q > 2$, we speak of complementary series representations; such representations do not appear in the Plancherel formula.
In \cite{C78}, it was shown that, if one nonzero matrix coefficient $\lip \pi(\cdot) \xi, \eta \rip$ of an irreducible unitary representation lies in $\fnspace{L}^{q+}(G)$, then all matrix coefficients satisfy the same estimate, but with $q$ replaced by $2N$, where $N$ is an integer such that $q < 2N$.
Various improvements were made to this, such as replacing $q < 2N$ by $q \leq 2N$ (see \cite{CHH89}), and removing the need to increase $q$ for real-rank $1$ groups (see \cite{C83}, which uses a detailed analysis of the representations of these groups).
It is also possible to replace irreducible representations by more general unitary representations as long as we assume that $\lip \pi(\cdot) \xi, \eta \rip  \in \fnspace{L}^{q+}(G)$ for all vectors $\xi$ and $\eta$ in a dense subset of $\Hil_\pi$.
This kind of information has been used in representation theory and its applications (see, for example, \cite{MNS00, BK15, GN15, GGN18}).

Recently, Samei and Wiersma \cite{SW18} found a functional analytic proof that if one matrix coefficient of an irreducible representation satisfies an $\fnspace{L}^{q+}(G)$ estimate for some $q \in [2,+\infty)$, then all matrix coefficients do.
Similar results appear in de Laat and Siebenand \cite{LS21}.
Their arguments involve the construction of a family of ``exotic $C^*$-algebras".
We are going to use a different form of their ideas.
Their elegant argument does not apply to all locally compact groups, but it does to many, including all semisimple Lie groups.
The key is understanding various $C^*$-algebras associated to group representations.
The crucial step is a formula of \cite[Lemma 2.3]{SW18} that generalises a result of \cite[proof of Theorem 1]{CHH89}, as follows.

\begin{lemma}\label{lem:SW}
Suppose that $\pi$ is a unitary representation of a locally compact group $G$ on a Hilbert space $\mathcal{H}_\pi$, and that $\mathcal{H}_0$ is a dense subspace of $\mathcal{H}_\pi$.
Then for all $f \in \fnspace{L}^1(G)$,
\begin{equation}\label{eq:SW}
  \lnorm \pi(f) \rnorm
= \sup_{\xi \in \mathcal{H}_0}  \lim_{n \to \infty}
            \lip  \lpar \pi(f^* * f)^{(*n)}\rpar \xi, \xi \rip^{1/2n}  .
\end{equation}
\end{lemma}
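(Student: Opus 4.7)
\medskip

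\noindent\textbf{Proof plan.} The plan is to reduce the identity to a standard spectral-theoretic statement about the positive self-adjoint operator $T := \pi(f^**f) = \pi(f)^*\pi(f)$, whose operator norm is $\lnorm \pi(f)\rnorm^2$, and then to use density of $\mathcal{H}_0$ to produce test vectors that probe arbitrarily close to the top of the spectrum of $T$.

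First I would rewrite the right-hand side of \eqref{eq:SW}. Since $\pi$ is a $*$-homomorphism from the convolution star-algebra $\fnspace{L}^1(G)$ to bounded operators on $\mathcal{H}_\pi$, the $n$-fold convolution power $(f^**f)^{(*n)}$ is sent to $T^n$, and $\lnorm T \rnorm = \lnorm \pi(f) \rnorm^2$. Thus \eqref{eq:SW} is equivalent to
\[
\lnorm T\rnorm
= \sup_{\xi \in \mathcal{H}_0} \lim_{n \to \infty}
        \lip T^n \xi, \xi \rip^{1/n}.
\]

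Next I would apply the spectral theorem to $T$. Write $T = \int_0^{\lnorm T \rnorm} \lambda \wrt E(\lambda)$, and for each $\xi \in \mathcal{H}_\pi$ let $\mu_\xi$ be the finite positive Borel measure on $[0,\lnorm T\rnorm]$ determined by $\mu_\xi(B) = \lip E(B)\xi, \xi\rip$, so that $\lip T^n\xi, \xi\rip = \int \lambda^n \wrt \mu_\xi(\lambda)$. Denoting by $\ell(\xi)$ the supremum of the support of $\mu_\xi$ (with $\ell(0) = 0$), the classical fact that $L^n$-norms of a bounded function converge to the $L^\infty$-norm yields
\[
\lim_{n \to \infty}\lip T^n\xi, \xi\rip^{1/n} = \ell(\xi) \leq \lnorm T \rnorm.
\]
This gives the easy inequality in \eqref{eq:SW}.

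For the reverse inequality, which I view as the only real content of the lemma, I would argue as follows. Fix $\epsilon > 0$. Because $\lnorm T\rnorm$ belongs to the spectrum of the positive operator $T$, the spectral projection $P_\epsilon := E\lpar (\lnorm T\rnorm - \epsilon, \lnorm T\rnorm] \rpar$ is nonzero; choose a unit vector $\eta$ in its range. Since $\mathcal{H}_0$ is dense in $\mathcal{H}_\pi$, I can pick $\xi \in \mathcal{H}_0$ with $\lnorm \xi - \eta\rnorm < \delta$ for any prescribed $\delta > 0$. Because $E([0,\lnorm T\rnorm - \epsilon])\eta = 0$, one has
\[
\lnorm E([0,\lnorm T\rnorm - \epsilon])\xi\rnorm
= \lnorm E([0,\lnorm T\rnorm - \epsilon])(\xi - \eta)\rnorm
\leq \delta,
\]
whereas $\lnorm \xi\rnorm \geq 1 - \delta$. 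Taking $\delta$ small enough forces $\mu_\xi\lpar (\lnorm T\rnorm - \epsilon, \lnorm T\rnorm]\rpar > 0$, so $\ell(\xi) \geq \lnorm T\rnorm - \epsilon$. Letting $\epsilon \to 0$ gives $\sup_{\xi \in \mathcal{H}_0} \ell(\xi) \geq \lnorm T\rnorm$, completing the proof after taking the square root.

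The main obstacle I anticipate is purely bookkeeping in this last step: ensuring that a vector chosen from the range of a spectral projection can be approximated by elements of $\mathcal{H}_0$ without losing its spectral localisation. The density of $\mathcal{H}_0$ together with the continuity of the spectral projections in the strong operator topology handles this, but it is the one place where the hypothesis on $\mathcal{H}_0$ enters. Everything else is standard spectral calculus and the identity $\pi((f^**f)^{(*n)}) = (\pi(f)^*\pi(f))^n$.
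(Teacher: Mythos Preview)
Your argument is correct: reducing to the positive operator $T=\pi(f)^*\pi(f)$, identifying $\langle T^n\xi,\xi\rangle^{1/n}$ with the top of the support of the spectral measure $\mu_\xi$, and then using density of $\mathcal{H}_0$ to hit spectral intervals near $\lnorm T\rnorm$ is exactly the right route, and your handling of the approximation step is clean. One cosmetic point: you might dispose of the degenerate case $\lnorm T\rnorm=0$ separately, since then the spectral projection $E\big((\lnorm T\rnorm-\epsilon,\lnorm T\rnorm]\big)$ is the whole identity and the inequality is trivial anyway.

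As for comparison with the paper: the paper does not prove this lemma at all. It is stated with attribution to Samei--Wiersma \cite[Lemma~2.3]{SW18}, noted as a generalisation of an argument in \cite[proof of Theorem~1]{CHH89}, and then used as a black box (notably inside the proof of Lemma~\ref{lem:B-lambda-fns}). Your spectral-radius argument is essentially the standard one and is what one would expect to find in those references, so there is no genuine methodological divergence to discuss---you have simply supplied the omitted proof.
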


The reader will observe that this is a variant of the spectral radius formula.
The powerful---almost magical---aspect of this is that the taking of $2n$th roots means that the norm of $\xi$ becomes irrelevant; also, if we have estimates for the matrix coefficients of the form
\[
\labs \lip \pi(x) \xi, \xi \rip \rabs \leq C \phi_\lambda(x)
\]
(for a positive constant $C$ and a positive-real-valued hermitean spherical function $\phi_\lambda$), then it will follow that
\[
\labs\lip  \pi\lpar (f^* * f)^{(*n)}\rpar \xi, \xi \rip \rabs^{1/2n}
\leq \lpar C \int_G (f^* * f)^{(*n)} \phi_\lambda(x) \wrt x \rpar^{1/2n} ,
\]
and the constant $C$ will become irrelevant as $n \to \infty$.

Unfortunately, $\fnspace{L}^{q+}(G)$ estimates are not very precise, especially when the real-rank $r$ of $G$ is greater than $1$.
Figure \ref{fig:Kn-Sp} (due to Knapp and Speh \cite{KS83}) shows the $\lambda \in ((\Lie{a}^*)^+)\afterbar$ that parametrise the positive definite positive-real-valued spherical functions $\phi_\lambda$ in the case in which $G$ is $\group{SU}(8,2)$.
The behaviour of these spherical functions at infinity involves $\lambda$ playing the role of $\lambda_1$ in Theorem \ref{thm:asymptotics}.
It is clear that on this group there are positive definite spherical functions that belong to the same $\fnspace{L}^{q+}(G)$ spaces but have very different asymptotic behaviour.

\begin{figure}
\begin{center}
\begin{tikzpicture}[scale=0.5]
\draw[dashed] (0,0) -- (0,10);
\draw [dashed](0,0) -- (10,10);
\draw (0,3) -- (4,7);
\draw (0,5) -- (2,7);

\draw[color=red] (0,6.75) -- (5.25,6.75) -- (6,6);
\path (6.5,5.2) node [color=red]{$p=6$}  ;

\filldraw (0,0) -- (0,3) -- (1.5,1.5) -- cycle ;
\filldraw (0,3) -- (0,5) -- (1,4) -- cycle ;
\filldraw (0,5) -- (0,7) -- (1,6) -- cycle ;
\path (6,10) node {$(\Lie{a}^*)^+$} ;
\end{tikzpicture}
\end{center}
\caption{Parameters for the class one complementary series}
\end{figure}\label{fig:Kn-Sp}

Another unsatisfactory aspect of $\fnspace{L}^{q+}(G)$ estimates is that they contain no pointwise information.

%
%
%
%

\subsection*{Other notation}
Expressions like $c$ and $C(\xi,\eta)$ denote \emph{constants} that  may vary from one instance to another: these are positive numbers that may depend on the ambient group, or the representation $\pi$, but not on any specifically quantified parameters.

\section{Proofs}

The proofs are mostly functional analytic.
We write $\fnspace{C}_c(G)$ and $\fnspace{L}^p(G)$ for the usual space of compactly supported continuous functions and the standard Lebesgue space on $G$.

\subsection*{Proof of Theorem A}
We restate Theorem A for the reader's convenience.

\begin{theoremA}
Let $\pi$ be a unitary representation of a semisimple Lie group $G$, and suppose that $\lambda \in (\Lie{a}^{+})\afterbar$.
Consider the condition
\begin{equation}\label{eq:strong-assumption}
\mathcal{A} \lip \pi(\cdot) \xi, \eta\rip
\leq \lnorm \xi \rnorm_{\Hil_\pi} \lnorm \eta \rnorm_{\Hil_\pi}
    \phi_\lambda
\qquad\forall \xi, \eta \in \cH_\pi.
\end{equation}
Then \eqref{eq:strong-assumption} holds if and only if all $\sigma \in \bar{G}$ such that $\sigma \preceq \pi$ satisfy \eqref{eq:strong-assumption} with $\pi$ replaced by $\sigma$.
\end{theoremA}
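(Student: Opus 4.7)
The reverse implication is immediate: since $\pi \preceq \pi$, the assumption that every $\sigma \preceq \pi$ satisfies \eqref{eq:strong-assumption} applied to $\sigma = \pi$ is exactly the desired conclusion. All the content lies in the forward implication, which I plan to establish by transferring the estimate first from single matrix coefficients of $\pi$ to all of $\fnspace{A}_\pi(G)$ using Minkowski's inequality, and then from $\fnspace{A}_\pi(G)$ to matrix coefficients of any weakly contained $\sigma$ using the locally uniform approximation characterisation of weak containment, property (c) in Subsection \ref{ssec:B-of-G}.

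For the first step, fix $v \in \fnspace{A}_\pi(G)$ and a representation $v = \sum_{j \in \N} \lip \pi(\cdot)\xi_j, \eta_j \rip$ with $\sum_j \norm{\xi_j}\norm{\eta_j} < \infty$. Since $\mathcal{A}v(x)$ is the $\fnspace{L}^2(K \times K)$-norm of $(k,k') \mapsto v(kxk')$, Minkowski's inequality together with the hypothesis \eqref{eq:strong-assumption} yields
\[
\mathcal{A}v(x) \leq \sum_{j} \mathcal{A} \lip \pi(\cdot)\xi_j, \eta_j \rip (x) \leq \Bigl(\sum_{j} \norm{\xi_j}\norm{\eta_j}\Bigr) \phi_\lambda(x)
\]
for each $x \in G$. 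Taking the infimum over admissible decompositions of $v$, and invoking the identity $\norm{v}_{\fnspace{A}_\pi} = \norm{v}_{\fnspace{B}}$, we conclude that $\mathcal{A}v \leq \norm{v}_{\fnspace{B}} \fn\phi_\lambda$ for every $v \in \fnspace{A}_\pi(G)$.

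For the second step, let $\sigma \preceq \pi$ and let $u = \lip \sigma(\cdot)\xi, \eta\rip$ with $\xi, \eta \in \cH_\sigma$, so that $\norm{u}_{\fnspace{B}} \leq \norm{\xi}\norm{\eta}$. By property (c), there is a sequence $v_j \in \fnspace{A}_\pi(G)$ with $\norm{v_j}_{\fnspace{B}} \leq \norm{u}_{\fnspace{B}} \leq \norm{\xi}\norm{\eta}$ converging to $u$ uniformly on compact subsets of $G$. The first step gives $\mathcal{A}v_j \leq \norm{\xi}\norm{\eta} \fn\phi_\lambda$ for every $j$. For each fixed $x \in G$ the set $KxK$ is compact, so $v_j \to u$ uniformly on $KxK$, whence $\labs v_j(kxk') \rabs^2 \to \labs u(kxk') \rabs^2$ uniformly on $K \times K$ and $\mathcal{A}v_j(x) \to \mathcal{A}u(x)$. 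The pointwise limit preserves the bound, giving $\mathcal{A}u(x) \leq \norm{\xi}\norm{\eta}\fn\phi_\lambda(x)$ for every $x \in G$, which is \eqref{eq:strong-assumption} for $\sigma$.

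No step appears to present a genuine obstacle: both the extension to $\fnspace{A}_\pi(G)$ and the passage to a weakly contained representation are formal once the standard facts about weak containment and the $\fnspace{A}_\pi$-norm recorded in Subsection \ref{ssec:B-of-G} are in place. The only point that warrants a little care is verifying that the $\mathcal{A}$-operation commutes with the approximation, and this is settled by the compactness of $K \times K$ together with the locally uniform convergence supplied by weak containment.
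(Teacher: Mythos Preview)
Your proof is correct and follows essentially the same route as the paper: approximate $u \in \fnspace{A}_\sigma(G)$ locally uniformly by $v_i \in \fnspace{A}_\pi(G)$ with $\norm{v_i}_\fnspace{B} \le \norm{u}_\fnspace{B}$, observe that $\mathcal{A}$ is continuous for this convergence because $KxK$ is compact, and pass to the limit. You are in fact more explicit than the paper on two points: you spell out the Minkowski step extending the hypothesis from single matrix coefficients of $\pi$ to all of $\fnspace{A}_\pi(G)$ (the paper leaves this implicit, relying on the identification of $\fnspace{A}_\pi(G)$ with matrix coefficients of $\bigoplus \pi$), and your reverse implication via $\pi \preceq \pi$ is cleaner than the paper's appeal to weak equivalence with $\bigoplus_{\sigma \preceq \pi} \sigma$.
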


\begin{proof}
Suppose that $\sigma \in \bar{G}$ and that $\sigma \preceq \pi$.
For all $u \in \fnspace{A}_\sigma$, there exists a net $v_i$ in $\fnspace{A}_\pi$ such that
$\norm{v_i}_{\fnspace{B}} \leq \norm{u}_{\fnspace{B}}$ and $v_i \to u$ locally uniformly on $G$.
It follows immediately that $\mathcal{A} v_i \to \mathcal{A} u$ locally uniformly on $A$, and hence
\[
\mathcal{A} u \leq \norm{u}_{\fnspace{B}} \phi_{\lambda}
\qquad\forall u \in \fnspace{A}_\pi(G).
\]
This implies that \eqref{eq:strong-assumption} holds with $\sigma$ in place of $\pi$.

Conversely, since $\pi$ is weakly equivalent to the sum of all $\sigma \in \hat G$ such that $\sigma \preceq \pi$, part (1) of the theorem follows by a similar argument.
\end{proof}

\subsection*{Construction of an exotic algebra}

\begin{lemma}\label{lem:A-conv}
Suppose that $f, f' \in \fnspace{C}_c(G)$.
Then
\[
\mathcal{A} (f*f')
\leq \mathcal{A} (f) * \mathcal{A} (f')   .
\]
Hence, if $\phi_\lambda$ is a positive-real-valued spherical function, then
\[
\int_G \mathcal{A} (f*f')(x) \fn \phi_\lambda(x) \wrt x
\leq \int_G \mathcal{A} (f)(y) \fn \phi_\lambda(y) \wrt y
\times \int_G \mathcal{A} (f')(x) \fn \phi_\lambda(x) \wrt x  .
\]
\end{lemma}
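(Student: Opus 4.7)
The strategy is to establish the pointwise bound $\mathcal{A}(f*f') \leq \mathcal{A}(f) * \mathcal{A}(f')$ first; the integrated inequality will then follow immediately by pairing with $\phi_\lambda$ and invoking the multiplicative property of spherical functions on $K$-bi-invariant $\fnspace{L}^1$-functions.

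For the pointwise bound, fix $x \in G$ and $k, k' \in K$. A naive application of Minkowski's integral inequality to the identity $(f*f')(kxk') = \int_G f(kw) \fn f'(w^{-1}xk') \wrt w$ (obtained by the left-translation $y \mapsto kw$) produces only single-sided $K$-averages of $f$ and $f'$, which are not controlled by $\mathcal{A}(f)$ and $\mathcal{A}(f')$. The key is to insert an auxiliary averaging variable $k'' \in K$, exploiting unimodularity of $G$. Combining the substitutions $y \mapsto yk''^{-1}$ (right-invariance) and $y \mapsto kw$ (left-invariance) yields
\[
(f*f')(kxk') = \int_K \int_G f(k w k''^{-1}) \fn f'(k'' w^{-1} x k') \wrt w \wrt k'' .
\]
Applying Cauchy--Schwarz in $k''$ gives
\[
\labs (f*f')(kxk') \rabs \leq \int_G \lpar \int_K \labs f(kwk''^{-1}) \rabs^2 \wrt k'' \rpar^{1/2} \lpar \int_K \labs f'(k'' w^{-1} x k') \rabs^2 \wrt k'' \rpar^{1/2} \wrt w ,
\]
and then taking $L^2(K \times K, \wrt k \wrt k')$-norms and interchanging with the $\wrt w$-integral by Minkowski's integral inequality causes the two factors to separate completely: the $k$ and $k''$ integrations produce $\mathcal{A}(f)(w)$, while the $k'$ and (renamed) $k'''$ integrations produce $\mathcal{A}(f')(w^{-1}x)$. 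The resulting estimate is exactly $\mathcal{A}(f*f')(x) \leq (\mathcal{A}(f) * \mathcal{A}(f'))(x)$.

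For the integrated inequality, I first observe that $\mathcal{A}(f)$ and $\mathcal{A}(f')$ are $K$-bi-invariant (immediate from the definition and the $K$-invariance of Haar measure on $K$) and lie in $\fnspace{L}^1(G)^\natural$ since $f, f' \in \fnspace{C}_c(G)$. Integrating the pointwise inequality against $\phi_\lambda$ and applying the spherical functional identity \eqref{eq:functional}, namely that $\phi_\lambda$ acts as a multiplicative character on $\fnspace{L}^1(G)^\natural$, immediately factorises the right-hand side into the product of $\int_G \mathcal{A}(f)(y) \fn \phi_\lambda(y) \wrt y$ and $\int_G \mathcal{A}(f')(x) \fn \phi_\lambda(x) \wrt x$, which is the stated inequality.

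The main obstacle is identifying the correct symmetrisation in the first step. Without the extra $k''$-averaging, Minkowski's inequality produces only one-sided $K$-integrals of $f$ and $f'$, and these have no general order relation with $\mathcal{A}(f)$ and $\mathcal{A}(f')$; it is precisely the insertion of the additional $K$-variable, made possible by unimodularity, that converts the $L^2$-norm over $K \times K$ into a product of genuine double $K$-integrals matching the definition of $\mathcal{A}$.
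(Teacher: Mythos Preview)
Your proposal is correct and follows essentially the same route as the paper: insert an auxiliary $K$-variable by invariance of Haar measure, apply Cauchy--Schwarz in that variable, then Minkowski's integral inequality to push the $L^2(K\times K)$-norm inside the $G$-integral so that the two factors separate into $\mathcal{A}(f)$ and $\mathcal{A}(f')$; the integrated inequality is then immediate from the multiplicativity of $\phi_\lambda$ on $K$-bi-invariant compactly supported functions. The only differences are cosmetic (your outer variables are $k,k'$ and the auxiliary one is $k''$, while the paper reverses these roles), and the paper dispatches the second part with a single sentence just as you do.
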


\begin{proof}
This is an exercise in integration theory.
Indeed, for $k, k', k'' \in K$,
\begin{align*}
|f*f'(k'xk'')|
&\leq \int_G |f(y)| \fn |f'(y^{-1}k'xk'')| \wrt y  \\
&= \int_G |f(k'yk)| \fn |f'(k^{-1} y^{-1}xk'')| \wrt y  ,
\end{align*}
whence
\begin{align*}
|f*f'(k'xk'')|
&\leq \int_K \int_G |f(k'yk)| \fn |f'(k^{-1}y^{-1}xk'')| \wrt y \wrt k  \\
&= \int_G \int_K |f(k'yk)| \fn |f'(k^{-1}y^{-1}xk'')| \wrt k \wrt y  \\
&\leq \int_G \lpar \int_K |f(k'yk)|^2 \wrt k \rpar^{1/2} \lpar\int_K |f'(k^{-1}y^{-1}xk'')|^2 \wrt k\rpar^{1/2} \wrt y.
\end{align*}
We deduce that
\begin{align*}
\mathcal{A} (f&*f')(x) \\
&= \lpar\int_K \int_K |f*f'(k'xk'')|^2 \wrt k' \wrt k'' \rpar^{1/2} \\
&\leq \bigglpar\int_K \int_K \bigglabs \int_G \lpar \int_K |f(k'yk)|^2 \wrt k \rpar^{1/2}
\lpar\int_K |f'(k^{-1}y^{-1}xk'')|^2 \wrt k\rpar^{1/2} \wrt y \biggrabs^2 \wrt k' \wrt k'' \biggrpar^{1/2} \\
&\leq \int_G \bigglpar\int_K \int_K \bigglabs \lpar \int_K |f(k'yk)|^2 \wrt k \rpar
\lpar\int_K |f'(k^{-1}y^{-1}xk'')|^2 \wrt k\rpar \biggrabs \wrt k' \wrt k'' \biggrpar^{1/2} \wrt y  \\
&= \int_G \lpar\int_K \int_K |f(k'yk)|^2 \wrt k \wrt k'\rpar^{1/2}
\lpar\int_K \int_K |f'(k^{-1}y^{-1}xk'')|^2 \wrt k \wrt k''\rpar^{1/2}  \wrt y  \\
& = \mathcal{A} (f) * \mathcal{A} (f') (x)        .
\end{align*}

The second part of the lemma follows immediately.
\end{proof}

For $f \in \fnspace{C}_c(G)$, we define
\begin{equation}\label{eq:def-norm-lambda}
\norm{f}_{(\lambda)} = \int_{G} \mathcal{A}(f)(x) \fn \phi_{\lambda}(x) \wrt x.
\end{equation}

\begin{lemma}\label{lem:C-star-alg}
Suppose that $\lambda \in (\Lie{a}^+)\afterbar$ and the spherical function $\phi_\lambda$ is bounded.
Then the norm $\norm{\cdot}_{(\lambda)}$ is a Banach algebra norm on $\fnspace{C}_c(G)$, that is,
\begin{enumerate}
  \item[(a)] $\norm{f}_{(\lambda)} \geq 0$ and equality holds if and only if $f = 0$
  \item[(b)] $\norm{f+ g}_{(\lambda)} \leq \norm{f}_{(\lambda)}  +\norm{g}_{(\lambda)}$
  \item[(c)] $\norm{cf}_{(\lambda)} = |c| \norm{f}_{(\lambda)}$
  \item[(d)] $\norm{f * g}_{(\lambda)} \leq \norm{f}_{(\lambda)} \norm{g}_{(\lambda)}$
\end{enumerate}
for all $f, g \in \fnspace{C}_c(G)$ and all $c \in\C$.
Further, if $\lambda \in \Lie{a}^{*,\Her}$, then
\begin{enumerate}
\item[(e)]
$\norm{f} = \norm{f^*}$ for all $f \in \fnspace{C}_c(G)$,
\end{enumerate}
and $\norm{\cdot}_{(\lambda)}$ is a star-algebra norm.
\end{lemma}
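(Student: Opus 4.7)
The plan is to verify the five properties in turn, exploiting that Lemma \ref{lem:A-conv} already contains the crucial submultiplicative estimate.

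The norm properties (a)--(c) should be routine. Since $\lambda \in ((\Lie{a}^*)^+)\afterbar$ forces $\phi_\lambda > 0$ on $G$, and since $\mathcal{A}(f) \geq 0$, positivity in (a) is immediate; if $\norm{f}_{(\lambda)} = 0$ then $\mathcal{A}(f)\fn\phi_\lambda$ vanishes almost everywhere, and the strict positivity of $\phi_\lambda$ combined with the continuity of $f$ (and hence of $\mathcal{A}(f)$) will force $f \equiv 0$. The triangle inequality (b) follows by applying Minkowski's inequality pointwise to the $\fnspace{L}^2(K\times K)$ norm that defines $\mathcal{A}$ and then integrating against the positive weight $\phi_\lambda$; homogeneity (c) is a direct calculation since $\mathcal{A}(cf) = |c|\fn\mathcal{A}(f)$.

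Property (d) is essentially a restatement of the second conclusion of Lemma \ref{lem:A-conv}. That conclusion combines the pointwise bound $\mathcal{A}(f * g) \leq \mathcal{A}(f) * \mathcal{A}(g)$ with the fact that $u \mapsto \int_G u \fn \phi_\lambda$ is a multiplicative functional on the commutative convolution algebra of $K$-bi-invariant functions, which is exactly the identity \eqref{eq:functional} for spherical functions. Unwinding the definition \eqref{eq:def-norm-lambda} of $\norm{\cdot}_{(\lambda)}$ then delivers the submultiplicative inequality $\norm{f*g}_{(\lambda)} \leq \norm{f}_{(\lambda)} \norm{g}_{(\lambda)}$.

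For property (e), I would first note that $G$ is semisimple and hence unimodular, so $f^*(x) = \overline{f(x^{-1})}$. The substitutions $k \mapsto k^{-1}$ and $k' \mapsto k'^{-1}$ in the double integral defining $\mathcal{A}(f^*)(x)$ yield $\mathcal{A}(f^*)(x) = \mathcal{A}(f)(x^{-1})$. Since $\lambda \in \Lie{a}^{*,\Her}$, the function $\phi_\lambda$ is hermitean, and being real-valued this means $\phi_\lambda(x^{-1}) = \overline{\phi_\lambda(x)} = \phi_\lambda(x)$. Changing variables $x \mapsto x^{-1}$ in $\norm{f^*}_{(\lambda)} = \int_G \mathcal{A}(f)(x^{-1}) \phi_\lambda(x) \wrt x$ and using unimodularity once more then gives $\norm{f^*}_{(\lambda)} = \norm{f}_{(\lambda)}$. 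The only ``obstacle'' worth mentioning is careful bookkeeping with the change of variables in (e) and a sanity check that the definiteness argument in (a) really uses continuity of $f$; every other step is a direct application of the preceding groundwork.
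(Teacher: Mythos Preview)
Your proposal is correct and follows essentially the same route as the paper's proof: items (a)--(c) are declared obvious there (you spell out the easy details), (d) is exactly the second conclusion of Lemma~\ref{lem:A-conv}, and (e) is the same change-of-variables computation using $\mathcal{A}(f^*)(x)=\mathcal{A}(f)(x^{-1})$, unimodularity of $G$, and the hermitean property $\phi_\lambda(x^{-1})=\phi_\lambda(x)$.
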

\begin{proof}
Items (a) to (c) are obvious, and (d) follows from Lemma 4.1.
Finally, if $\phi_\lambda$ is hermitean, then
\begin{align*}
\int_{G} \mathcal{A}(f^*)(x) \fn \phi_{\lambda}(x) \wrt x
= \int_{G} \mathcal{A}(f)(x^{-1}) \fn \phi_{\lambda}(x) \wrt x
= \int_{G} \mathcal{A}(f)(x) \fn \phi_{\lambda}(x) \wrt x
\end{align*}
for all $f \in \fnspace{C}_c(G)$, as claimed.
\end{proof}

We define $\fnspace{E}_{(\lambda)}(G)$ to be the convolution algebra $\fnspace{C}_c(G)$, equipped with the norm  $\norm{\cdot}_{(\lambda)}$.
Note that there is no reason to suppose that $\norm{f*f^*}_{(\lambda)} = \norm{f}_{(\lambda)}^2$, nor does the star-algebra $\fnspace{E}_{(\lambda)}(G)$ have a bounded approximate identify.
We wish to construct the enveloping $C^*$-algebra of $\fnspace{E}_{(\lambda)}(G)$, and first we examine the dual space of $\fnspace{E}_{(\lambda)}(G)$.

\begin{lemma}\label{lem:duality}
Suppose that $u$ is a continuous function on $G$ and $\lambda \in (\Lie{a}^+)\afterbar$.
Then the following are equivalent:
\begin{enumerate}
  \item[(a)] $\ds\labs\int_{G} u(x) \fn f(x) \wrt x \rabs \leq C \norm{f}_{(\lambda)}$  for all $f \in \fnspace{C}_c(G)$;
  \item[(b)] $\ds \mathcal{A} u \leq C \phi_\lambda$.
\end{enumerate}
\end{lemma}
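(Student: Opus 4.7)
The approach is to prove (b) $\Rightarrow$ (a) by a pointwise Cauchy--Schwarz argument and (a) $\Rightarrow$ (b) by duality, testing against approximate deltas concentrated near the double coset $Kx_0K$ for arbitrary $x_0 \in A^+$. For (b) $\Rightarrow$ (a): by left- and right-invariance of Haar measure on $G$, for each $k, k' \in K$ one has $\int_G |u(x)f(x)| \wrt x = \int_G |u(kxk')|\,|f(kxk')| \wrt x$; averaging over $K \times K$, interchanging the order of integration and applying Cauchy--Schwarz in the $K \times K$ variables pointwise in $x$ gives $\int_G |uf| \wrt x \leq \int_G \mathcal{A}u(x) \fn \mathcal{A}f(x) \wrt x$, and the hypothesis $\mathcal{A}u \leq C\phi_\lambda$ then bounds this by $C \int_G \mathcal{A}f \fn \phi_\lambda \wrt x = C\norm{f}_{(\lambda)}$.

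For (a) $\Rightarrow$ (b), since $\mathcal{A}u$ and $\phi_\lambda$ are continuous and $K$-bi-invariant and $KA^+K$ is open and dense in $G$, it suffices to verify the inequality at $x_0 = \exp(H_0)$ with $H_0 \in \Lie{a}^+$. As $M$ centralises $A$, the function $F(k, k'):= u(k x_0 k')$ on $K \times K$ is $M$-invariant in the sense that $F(km, m^{-1}k') = F(k, k')$ for all $m \in M$, and by duality in $L^2(K \times K)$,
\[
\mathcal{A}u(x_0) = \sup\lset \bigglabs \int_K \int_K F(k,k')\fn h(k,k') \wrt k \wrt k' \biggrabs : h \in L^2(K \times K)^M,\ \norm{h}_{L^2} = 1 \rset.
\]
Given such an $h$ (taken continuous by density) and small $\epsilon > 0$, I define $f_\epsilon \in \fnspace{C}_c(G)$ by $f_\epsilon(k \exp(H) k') := \psi_\epsilon(H - H_0) h(k, k')/\mathrm{w}(H_0)$, where $\psi_\epsilon$ is a nonnegative smooth bump on $\Lie{a}$ with integral $1$ and support shrinking to $\{0\}$; the $M$-invariance of $h$ makes $f_\epsilon$ well-defined on a small $K$-bi-invariant neighbourhood of $x_0$ via the Cartan decomposition, and extended by zero elsewhere.

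A direct calculation using the Cartan form \eqref{eq:Haar-measure} of the Haar measure shows that as $\epsilon \to 0$, $\int_G u f_\epsilon \wrt x \to \int_K \int_K F(k, k') h(k, k') \wrt k \wrt k'$, while $\mathcal{A}(f_\epsilon)(k \exp(H) k') = \psi_\epsilon(H - H_0) \norm{h}_{L^2}/\mathrm{w}(H_0)$ depends only on $H$, so that $\norm{f_\epsilon}_{(\lambda)} \to \norm{h}_{L^2} \phi_\lambda(x_0)$. Applying hypothesis (a) to $f_\epsilon$ and letting $\epsilon \to 0$ yields $|\int F h| \leq C\norm{h}_{L^2}\phi_\lambda(x_0)$, and taking the supremum over $h$ gives $\mathcal{A}u(x_0) \leq C\phi_\lambda(x_0)$. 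The main technical hurdle is the bookkeeping required to respect the $M$-ambiguity in the Cartan decomposition when defining $f_\epsilon$; once that is handled, the two limit computations are routine consequences of \eqref{eq:Haar-measure} and Fubini.
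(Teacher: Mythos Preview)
Your proposal is correct and is precisely the kind of argument the paper intends: the paper's own proof consists of the single sentence ``This is an exercise in standard inequalities in integration theory,'' and your two steps---Cauchy--Schwarz over $K\times K$ for (b)$\Rightarrow$(a), and testing against approximate deltas supported near a fixed $K$-double coset for (a)$\Rightarrow$(b)---are exactly the standard inequalities in question. Indeed, the inequality $\labs\int_G uf\rabs \leq \int_G \mathcal{A}u\,\mathcal{A}f$ that you establish for the forward direction is used verbatim elsewhere in the paper (in the proof of Lemma~\ref{lem:dual-of-A-lambda}), confirming that this is the intended route.
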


\begin{proof}
This is an exercise in standard inequalities in integration theory.
\end{proof}

The next step is to show that $\fnspace{E}_{(\lambda)}(G)$ has a nondegenerate representation on a Hilbert space if $\lambda \in (\Lie{a}^{+})\afterbar$.

\begin{lemma}\label{lem:dual-of-A-lambda}
If $\lambda \in (\Lie{a}^{+})\afterbar$ and $f \in \fnspace{E}_{(\lambda)}(G)$, then for all $g \in \fnspace{L}^2(G)$, $f*g \in \fnspace{L}^2(G)$ and
\[
\norm{f * g}_{\fnspace{L}^2(G)}
\leq \norm{f}_{(\lambda)} \norm{g}_{\fnspace{L}^2(G)}.
\]
Further, if $f*g = 0$ in $\fnspace{L}^2(G)$ for all $g \in \fnspace{L}^2(G)$, then $f = 0$ in $\fnspace{E}_{(\lambda)}(G)$.
\end{lemma}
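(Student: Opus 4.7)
The plan is to realise $\fnspace{E}_{(\lambda)}(G)$ as an algebra of bounded convolution operators on $\fnspace{L}^2(G)$ via the left regular representation $\lambda_G$, using a comparison of spherical functions together with the Cowling--Haagerup--Howe majoration principle for matrix coefficients of the regular representation.

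First, since $G$ is semisimple, the restricted roots span $\Lie{a}^*$, so the Weyl group $W$ has only the origin as a fixed point in $\Lie{a}^*$. Hence $\sum_{w\in W} w\lambda = 0$, and in particular $0 \in \conv(W\lambda)$ for any $\lambda \in (\Lie{a}^+)\afterbar$. By Theorem \ref{thm:compare-phis}, this forces $\phi_0 \leq \phi_\lambda$ on all of $G$. Second, since $\lambda_G$ is tempered, the result of \cite{CHH89} yields
\[
\mathcal{A}\lpar \lip \lambda_G(\cdot) g, h\rip\rpar (x) \leq \norm{g}_{\fnspace{L}^2} \norm{h}_{\fnspace{L}^2} \phi_0(x)
\qquad\forall g, h \in \fnspace{L}^2(G),\ \forall x \in G.
\]
For $f \in \fnspace{C}_c(G)$ and $g, h \in \fnspace{L}^2(G)$, the identity $\lip f*g, h\rip = \int_G f(x) \lip \lambda_G(x) g, h\rip \wrt x$ coupled with the $K$-bi-invariance of Haar measure and Cauchy--Schwarz applied pointwise in $x$ on $K \times K$ gives
\[
|\lip f*g, h\rip| = \bigglabs \int_K \int_K \int_G f(kxk') \lip \lambda_G(kxk') g, h\rip \wrt x \wrt k \wrt k' \biggrabs
\leq \int_G \mathcal{A}(f)(x) \fn \mathcal{A}\lpar\lip \lambda_G(\cdot)g, h\rip\rpar(x) \wrt x.
\]
Combining these two displays with the comparison $\phi_0 \leq \phi_\lambda$ then yields $|\lip f*g, h\rip| \leq \norm{g}_{\fnspace{L}^2} \norm{h}_{\fnspace{L}^2} \norm{f}_{(\lambda)}$, and taking the supremum over unit $h$ gives $\norm{f*g}_{\fnspace{L}^2} \leq \norm{f}_{(\lambda)} \norm{g}_{\fnspace{L}^2}$.

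For the faithfulness, if $f*g = 0$ in $\fnspace{L}^2(G)$ for every $g \in \fnspace{L}^2(G)$, I would take $(g_n)$ to be a $\fnspace{C}_c(G)$ approximate identity (nonnegative, integral $1$, supports shrinking to the identity). Then $f * g_n \to f$ uniformly on $G$, because $f$ is uniformly continuous with compact support, so $f = 0$. The main obstacle is the appeal to the Cowling--Haagerup--Howe estimate for the regular representation, which requires substantial harmonic-analytic machinery (the tempered structure of $\lambda_G$ and the decomposition of $\fnspace{L}^2(G)$ into tempered irreducibles); the remainder of the argument is bookkeeping, and the whole lemma essentially encodes the statement that the enveloping $C^*$-algebra of $\fnspace{E}_{(\lambda)}(G)$ dominates the reduced $C^*$-algebra of $G$.
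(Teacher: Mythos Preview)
Your proof is correct and follows essentially the same route as the paper: both invoke Theorem~\ref{thm:compare-phis} to get $\phi_0 \leq \phi_\lambda$, then apply the Cowling--Haagerup--Howe estimate $\mathcal{A}u \leq \norm{u}_{\fnspace{B}}\,\phi_0$ for matrix coefficients of the regular representation, and finish by the duality inequality $\labs\int_G uf\rabs \leq \int_G \mathcal{A}(u)\,\mathcal{A}(f)$. Your additions---the explicit averaging argument for $0 \in \conv(W\lambda)$ and the approximate-identity argument for faithfulness---fill in details the paper leaves to the reader, but the overall strategy is identical.
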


\begin{proof}
First, $\phi_0 \leq \phi_\lambda$ by Theorem \ref{thm:compare-phis}.
From \cite{CHH89}, it is known that for all $u \in \fnspace{A}(G)$,
\[
\mathcal{A} u \leq \norm{u}_\fnspace{B} \fn\phi_0.
\]
It follows that
\begin{align*}
\labs \int_{G} u(x) \fn f(x) \wrt x \rabs
&\leq \int_{G} \mathcal{A} (u)(x) \fn \mathcal{A} (f)(x) \wrt x\\
&\leq \norm{u}_\fnspace{B}  \int_{G} \phi_0(x) \fn \mathcal{A} (f)(x) \wrt x \\
&= \norm{u}_\fnspace{B} \norm{f}_{(0)} \\
&\leq  \norm{u}_\fnspace{B} \norm{f}_{(\lambda)}
\end{align*}
for all $f \in \fnspace{E}_{(\lambda)}(G)$.
The duality between $\fnspace{A}(G)$ and the reduced $C^*$-algebra of $G$ implies that there is a norm-nonincreasing homomorphism of $\fnspace{E}_{(\lambda)}(G)$ into the said $C^*$-algebra.

The last assertion is easy to check, so the homomorphism is an embedding.
\end{proof}

This representation is a star-representation if $\lambda \in \Lie{a}^{*,\Her}$.

\begin{lemma}\label{lem:B-lambda-fns}
Suppose that $\lambda \in \Lie{a}^{*,\Her}$ and $\pi \in \bar{G}$.
Then the following are equivalent:
\begin{enumerate}
\item[(a)] for all $\xi$ and $\eta$ in a dense subspace $\cH^0_\pi$ of $\cH_\pi$, there is a constant $ C(\xi,\eta)$ such that
\[
\labs\int_G f(x) \lip\pi(x) \xi,\eta\rip \wrt x \rabs
\leq C(\xi,\eta) \norm{f}_{(\lambda)}
\qquad\forall f \in \fnspace{C}_c(G);
\]

\item[(b)] for all $\xi$ and $\eta$ in $\cH_\pi$,
\[
\labs\int_G f(x) \lip\pi(x) \xi,\eta\rip \wrt x \rabs
\leq  \norm{\xi}_{\cH_\pi} \norm{\eta}_{\cH_\pi}  \norm{f}_{(\lambda)}
\quad\forall f \in \fnspace{C}_c(G);
\]

\item[(c)] for all $\xi$ and $\eta$ in a dense subspace $\cH^0_\pi$ of $\cH_\pi$, there is a constant $ C(\xi,\eta)$ such that
\[
\mathcal{A}(\lip\pi(\cdot) \xi,\eta\rip) \leq C(\xi,\eta) \fn \phi_\lambda ;
\]
\item[(d)] for all $\xi$ and $\eta$ in a dense subspace $\cH^1_\pi$ of $\cH_\pi$, containing only $K$-finite vectors, there is a constant $ C(\xi,\eta)$ such that
\[
\labs\lip\pi(\cdot) \xi,\eta\rip\rabs
\leq C(\xi,\eta) \fn \phi_\lambda ;
\]

\item[(e)] for all smooth vectors $\xi$ and $\eta$ in $\cH_\pi$, there is a constant $ C(\xi,\eta)$ such that
\[
\labs\lip\pi(\cdot) \xi,\eta\rip\rabs
\leq C(\xi,\eta) \fn \phi_\lambda  ;
\]

\item[(f)] for all $\xi$ and $\eta$ in $\cH_\pi$,
\[
\mathcal{A}(\lip\pi(\cdot) \xi,\eta\rip)
\leq   \norm{\xi}_{\cH_\pi} \norm{\eta}_{\cH_\pi}  \fn \phi_\lambda;
\]

\item[(g)] for all $u$ in $\fnspace{A}_\pi(G)$,
\[
\mathcal{A}(u)
\leq   \norm{u}_{\fnspace{B}} \fn \phi_\lambda.
\]
\end{enumerate}
\end{lemma}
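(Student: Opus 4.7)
The plan is to close a cycle of implications among the seven conditions, using Lemma~\ref{lem:duality} to switch freely between the ``integration'' formulations (a), (b), (g) and the ``$\mathcal{A}$-pointwise'' formulations (c), (f). Applying Lemma~\ref{lem:duality} to $u = \lip\pi(\cdot)\xi,\eta\rip$ yields (a) $\Leftrightarrow$ (c) immediately, and, after taking the supremum over unit vectors, (b) $\Leftrightarrow$ (f). The equivalence (f) $\Leftrightarrow$ (g) is more direct still: given any representation of $u \in \fnspace{A}_\pi(G)$ as $\sum_j \lip\pi(\cdot)\xi_j,\eta_j\rip$ with $\sum_j \lnorm\xi_j\rnorm \lnorm\eta_j\rnorm < \infty$, the subadditivity of $\mathcal{A}$ reduces (g) to (f) applied termwise, while conversely a single matrix coefficient lies in $\fnspace{A}_\pi$ with norm at most $\lnorm\xi\rnorm\lnorm\eta\rnorm$.

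The hard step, and the one I expect to be the main obstacle, is (a) $\Rightarrow$ (b): here bounds valid only on a dense subspace must be upgraded to bounds on all vectors. I would invoke the spectral-radius formula of Lemma~\ref{lem:SW}: for $\xi \in \cH^0_\pi$, applying (a) with $\eta = \xi$ and with test function $(f^** f)^{(*n)}$ gives
\[
\labs \lip \pi((f^** f)^{(*n)})\xi,\xi\rip \rabs \leq C(\xi,\xi) \lnorm (f^** f)^{(*n)} \rnorm_{(\lambda)},
\]
and the Banach $*$-algebra property of $\fnspace{E}_{(\lambda)}(G)$ from Lemma~\ref{lem:C-star-alg}(d,e) bounds the right-hand side by $C(\xi,\xi) \lnorm f \rnorm_{(\lambda)}^{2n}$. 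Taking $2n$th roots and then $n \to \infty$ annihilates the $\xi$-dependent constant, and Lemma~\ref{lem:SW} turns this into $\lnorm \pi(f) \rnorm \leq \lnorm f \rnorm_{(\lambda)}$, i.e.\ (b). The reverse implication (b) $\Rightarrow$ (a) is trivial.

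For (f) $\Rightarrow$ (d), I would use that, when $\xi$ and $\eta$ are $K$-finite, $(k, k') \mapsto \lip \pi(k x k')\xi,\eta\rip$ is a matrix coefficient of the finite-dimensional $K \times K$-representation on $\Span(\pi(K)\xi) \otimes \Span(\pi(K)\eta)^{*}$. Decomposing into $K \times K$-irreducibles and applying Cauchy--Schwarz with Schur orthogonality then gives the clean estimate
\[
\sup_{k,k' \in K} \labs \lip \pi(k x k') \xi,\eta\rip \rabs \leq \lpar \dim\Span(\pi(K)\xi) \rpar^{1/2} \lpar \dim\Span(\pi(K)\eta) \rpar^{1/2} \mathcal{A}\lip\pi(\cdot)\xi,\eta\rip(x),
\]
which combined with (f) is precisely (d) with the constant announced in Theorem~B. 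Since $\phi_\lambda$ is $K$-bi-invariant, (d) $\Rightarrow$ (c) on the dense $K$-finite subspace is then immediate.

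Finally, (f) $\Rightarrow$ (e) is obtained from the $K$-finite pointwise bound by summing: decompose smooth $\xi$ and $\eta$ via \eqref{eq:xi-decomp} with the rapid-decay estimate \eqref{eq:smooth-decomp}, bound each $\labs \lip\pi(x)\xi_\tau,\eta_\sigma\rip \rabs$ by the inequality of the previous paragraph, and use $\dim \Span \pi(K)\xi_\tau \leq \dim \cH_\tau$ together with the Weyl dimension estimate \eqref{eq:Weyl-dim-formula} to control the dimensional factors by a polynomial in $\lnorm\tau\rnorm$ and $\lnorm\sigma\rnorm$. This polynomial is absorbed into the super-polynomial decay of $\lnorm\xi_\tau\rnorm \lnorm\eta_\sigma\rnorm$, so the resulting double sum converges absolutely and bounds $\labs \lip\pi(x)\xi,\eta\rip \rabs$ pointwise by $C(\xi,\eta) \phi_\lambda(x)$, giving (e); and (e) $\Rightarrow$ (c) on the dense subspace of smooth vectors is once more immediate, closing every remaining cycle.
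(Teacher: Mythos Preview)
Your argument is correct and follows essentially the same route as the paper: the duality Lemma~\ref{lem:duality} for (a)$\Leftrightarrow$(c) and (b)$\Leftrightarrow$(f), the spectral--radius Lemma~\ref{lem:SW} together with the star-algebra properties of $\fnspace{E}_{(\lambda)}(G)$ for the upgrade (a)$\Rightarrow$(b), and the $K$-type decomposition with Weyl dimension bounds for (f)$\Rightarrow$(e). Your (a)$\Rightarrow$(b) step is in fact slightly cleaner than the paper's, which passes through the GNS representations $\sigma_\xi$ and then uses that $\pi$ is weakly equivalent to $\bigoplus_\xi \sigma_\xi$; you instead apply Lemma~\ref{lem:SW} directly to $\pi$ with $\cH_0 = \cH_\pi^0$, which the statement of that lemma already permits.

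One small slip: for a general (not necessarily irreducible) $\pi$, a $\tau$-isotypic vector satisfies $\dim\Span(\pi(K)\xi_\tau)\le \dim(\cH_\tau)^2$, not $\dim(\cH_\tau)$; the sharper bound holds only in the irreducible case. This is harmless for your (f)$\Rightarrow$(e) argument since $\dim(\cH_\tau)^2$ is still polynomial in $\lnorm\tau\rnorm$ by \eqref{eq:Weyl-dim-formula} and is absorbed by \eqref{eq:smooth-decomp}, but you should correct the exponent. (The paper's own chain for (f)$\Rightarrow$(e) bounds $|\lip\pi(\cdot)\xi_\tau,\eta_{\tau'}\rip|$ directly by $\dim(\cH_\tau)^{1/2}\dim(\cH_{\tau'})^{1/2}\,\mathcal{A}\lip\pi(\cdot)\xi_\tau,\eta_{\tau'}\rip$, citing \cite{CHH89}, which sidesteps the issue.)
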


\begin{proof}
Lemma \ref{lem:duality} shows that (a) and (c) are equivalent and that (b) and (f) are equivalent; moreover, (f) and (g) are equivalent by definition of $\fnspace{A}_\pi(G)$.
It is easy to see that (c), with $\cH_\pi^0$ replaced by $\cH_\pi^1$, and (d) are equivalent, and obvious that (e) implies (d) and that both (b) and (e) imply (c).
It remains therefore to prove that (c) implies (b) and (f) implies (e).

Suppose that (c) holds, and take $\xi \in \cH_\pi^0$. 
Then
\begin{equation}\label{eq:pre-C-star}
\labs \int_G f(x) \lip\pi(x) \xi,\xi\rip \wrt x \rabs
\leq C(\xi) \norm{f}_{(\lambda)}
\qquad\forall f \in \fnspace{C}_c(G).
\end{equation}
It follows that the Gel$'$fand--Na{\u\i}mark--Segal (GNS) representation $\sigma_\xi$ associated to $\lip\pi(\cdot) \xi,\xi\rip$ is bounded on the star algebra $\fnspace{E}_{(\lambda)}(G)$.
From Lemma \ref{lem:SW}, coupled with parts (d) and (e) of  Lemma \ref{lem:C-star-alg}, we deduce that
\[
\norm{\sigma_\xi(f)}_{\cH_{\sigma_\xi}} \leq \norm{f}_{(\lambda)}.
\]
Since $\pi$ is weakly equivalent to the direct sum of all GNS representations $\sigma_\xi$ as $\xi$ runs over $\cH_\pi^0$, it follows that
\[
\norm{\pi(f)}_{\cH_{\pi}} \leq \norm{f}_{(\lambda)},
\]
which implies (b).

Finally, if (f) holds, and $\xi$ and $\eta$ are smooth vectors, then we may decompose $\xi$ and $\eta$ as in \eqref{eq:xi-decomp}, and argue as in the Proposition of \cite[p.~105]{CHH89}; we deduce that
\begin{align*}
\labs\lip\pi(\cdot) \xi,\eta\rip\rabs
&\leq \sum_{\tau,\tau' \in \hat{K}}
        \labs\lip\pi(\cdot) \xi_\tau,\eta_{\tau'}\rip\rabs  \\
&\leq \sum_{\tau,\tau' \in \hat{K}}
        \dim(\cH_\tau)^{1/2} \dim(\cH_{\tau'})^{1/2}
        \mathcal{A}(\lip\pi(\cdot) \xi_\tau,\eta_{\tau'}\rip)  \\
&\leq \sum_{\tau,\tau' \in \hat{K}}
        \dim(\cH_\tau)^{1/2} \norm{\xi_\tau}_{\cH_\pi} \dim(\cH_{\tau'})^{1/2}
       \norm{\eta_{\tau'}}_{\cH_\pi} \fn \phi_\lambda  \\
&\leq C(\xi,\eta) \fn \phi_\lambda ,
\end{align*}
from \eqref{eq:Weyl-dim-formula} and \eqref{eq:smooth-decomp}, so (e) holds.
\end{proof}

Condition (e) could be improved to requiring that $\xi$ and $\eta$ lie in a Sobolev space; the order of differentiability needed depends on the group $G$.
The main point of (e), however, is that the space of smooth vectors is $G$-invariant while the space of $K$-finite vectors is not.

\begin{definition}
For all $\lambda \in \Lie{a}^{*,\Her}$, we define $\fnspace{C}^*_{(\lambda)}(G)$ to be the enveloping $C^*$-algebra of $\fnspace{E}_{(\lambda)}(G)$; we take $\fnspace{B}_{(\lambda)}(G)$ to be its dual space, and $\bar{G}_{(\lambda)}$ to be the collection of all unitary representations $\pi$ of $G$ such that
any of the equivalent conditions of Lemma \ref{lem:B-lambda-fns} hold.
\end{definition}

From the definitions, $\fnspace{C}^*_{(\lambda)}(G)$ is the completion of $\fnspace{C}_c(G)$ in the norm
\[
\norm{f}_{(\lambda)}= \sup\{ \norm{\pi(f)} : \pi \in \bar{G}_{(\lambda)}\},
\]
while $\fnspace{B}_{(\lambda)}(G)$ is the space of matrix coefficients of the unitary representations in $\bar{G}_{(\lambda)}$, and is a weak-star topology closed subspace of $\fnspace{B}(G)$.

The algebra  $\fnspace{C}^*_{(\lambda)}(G)$ is an ``exotic $C^*$-algebra'', in the sense that it lies ``between'' the reduced $C^*$-algebra of $G$ and the full $C^*$-algebra of $G$.
It is also exotic in the sense that it arises as the completion of a star-algebra that does not contain an approximate identity.

\subsection*{Proof of Theorem B}

We now reproduce and prove Theorem B.

\begin{theoremB}
Let $\pi$ be a unitary representation of a semisimple Lie group $G$, and suppose that $\lambda \in \Lie{a}^{*,\Her}$.
Then the following are equivalent:
\begin{equation}\label{eq:cond-1}
\mathcal{A} \lip \pi(x) \xi, \eta\rip
\leq \lnorm \xi \rnorm_{\Hil_\pi} \lnorm \eta \rnorm_{\Hil_\pi}
    \phi_\lambda(x)
\qquad\forall x \in G
\end{equation}
for all $\xi$ and $\eta$ in $\cH_\pi$, and
\begin{equation}\label{eq:cond-2}
\sup_{k,k' \in K} \labs \lip \pi (k x k') \xi, \eta\rip\rabs  \leq C(\xi, \eta) \fn\phi_\lambda(x)
\qquad\forall x \in G
\end{equation}
for all $k,k' \in K$ and for all $K$-finite $\xi, \eta \in \cH_\pi$.
Further, if these conditions hold, then we may take $C(\xi, \eta) $ to be given by
\[
C(\xi, \eta) = \dim(\Span(\pi(K)\xi))^{1/2} \lnorm \xi \rnorm_{\Hil_\pi} \dim(\Span(\pi(K)\eta))^{1/2}  \lnorm \eta \rnorm_{\Hil_\pi},
\]
and moreover, \eqref{eq:cond-2} holds for all smooth $\xi, \eta \in \cH_\pi$.

If moreover $\pi$ is irreducible, then conditions \eqref{eq:cond-1} and \eqref{eq:cond-2} both hold provided that there exist $\xi$ and $\eta$ in $\cH_\pi \setminus \{0\}$
such that
\begin{equation}\label{eq:cond-3}
\mathcal{A} \lip \pi(x) \xi, \eta\rip
\leq C(\xi,\eta)  \fn\phi_\lambda(x)
\qquad\forall x \in G.
\end{equation}
\end{theoremB}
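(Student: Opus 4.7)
The plan is to combine Lemma \ref{lem:B-lambda-fns}, which already establishes the equivalence of the $\fnspace{L}^2$-averaged bound on all vectors with various pointwise bounds on dense subspaces, with a Peter--Weyl estimate to pin down the constant $C(\xi,\eta)$; for irreducible $\pi$ we will then use a spectral radius argument exploiting the CCR property of semisimple Lie groups. Condition \eqref{eq:cond-2} restricted to $K$-finite $\xi,\eta$ is exactly condition (d) of Lemma \ref{lem:B-lambda-fns} and condition \eqref{eq:cond-1} is condition (f), so the implication \eqref{eq:cond-2}$\Rightarrow$\eqref{eq:cond-1} and the extension of \eqref{eq:cond-2} to smooth vectors follow directly from that lemma, via the chains (d)$\Rightarrow$(c)$\Rightarrow$(b)$\Leftrightarrow$(f) and (f)$\Rightarrow$(e) respectively.

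For the converse \eqref{eq:cond-1}$\Rightarrow$\eqref{eq:cond-2} with the explicit constant, I fix $K$-finite $\xi,\eta$ and $x\in G$ and observe that
\[
F(k,k') := \lip\pi(kxk')\xi,\eta\rip = \lip\pi(x)\pi(k')\xi,\pi(k^{-1})\eta\rip
\]
lies, as a function of $(k,k')\in K\times K$, in the $K\times K$-invariant subspace $V$ of $\fnspace{L}^2(K\times K)$ spanned by the $d_1 d_2$ functions $(k,k')\mapsto \lip\pi(k')\xi,\xi_i\rip \overline{\lip\pi(k^{-1})\eta,\eta_j\rip}$, where $\{\xi_i\}$ and $\{\eta_j\}$ are orthonormal bases of $\Span(\pi(K)\xi)$ and $\Span(\pi(K)\eta)$ of sizes $d_1$ and $d_2$ respectively. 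Since the reproducing kernel at $(e,e)$ of any finite-dimensional $K\times K$-invariant subspace of $\fnspace{L}^2(K\times K)$ equals its dimension (a consequence of Peter--Weyl), we obtain $|F(e,e)|^2 \leq d_1 d_2 \norm{F}_{\fnspace{L}^2(K\times K)}^2$; combining with \eqref{eq:cond-1} and the $K$-bi-invariance of $\phi_\lambda$ yields \eqref{eq:cond-2} with the stated $C(\xi,\eta)$.

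The hardest part is the last assertion, for irreducible $\pi$. From \eqref{eq:cond-3} and Lemma \ref{lem:duality}, $|\lip\pi(f)\xi,\eta\rip| \leq C\norm{f}_{(\lambda)}$ for every $f\in\fnspace{C}_c(G)$. For any $h_1,h_2\in\fnspace{C}_c(G)$, submultiplicativity of $\norm{\cdot}_{(\lambda)}$ together with $\norm{h^*}_{(\lambda)} = \norm{h}_{(\lambda)}$ (Lemma \ref{lem:C-star-alg}) give
\[
|\lip\pi(f)\pi(h_1)\xi,\pi(h_2)\eta\rip| = |\lip\pi(h_2^{*}*f*h_1)\xi,\eta\rip| \leq C\norm{h_1}_{(\lambda)}\norm{h_2}_{(\lambda)}\norm{f}_{(\lambda)},
\]
and iterating this bound with $f = (g^{*}*g)^{*n}$ for $g\in\fnspace{C}_c(G)$ yields
\[
\limsup_{n\to\infty} \labs\lip \pi(g^{*}*g)^n \pi(h_1)\xi, \pi(h_2)\eta\rip\rabs^{1/2n} \leq \norm{g}_{(\lambda)}.
\]

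Since irreducible unitary representations of a semisimple Lie group are CCR, $T := \pi(g^{*}*g)$ is a positive compact operator, with isolated top eigenvalue $\norm{\pi(g)}^2$ and nonzero spectral projection $P_1$ onto a finite-dimensional eigenspace. By irreducibility of $\pi$, both $\pi(\fnspace{C}_c(G))\xi$ and $\pi(\fnspace{C}_c(G))\eta$ are dense in $\cH_\pi$, so we may first choose $h_1\in\fnspace{C}_c(G)$ with $P_1\pi(h_1)\xi \neq 0$, and then $h_2\in\fnspace{C}_c(G)$ with $\lip P_1\pi(h_1)\xi,\pi(h_2)\eta\rip \neq 0$. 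For these choices, the discrete spectral decomposition of the compact operator $T$ forces $\labs\lip T^n\pi(h_1)\xi,\pi(h_2)\eta\rip\rabs^{1/2n} \to \norm{\pi(g)}$ as $n\to\infty$, whence $\norm{\pi(g)} \leq \norm{g}_{(\lambda)}$. Since $g$ was arbitrary, $\pi \in \bar G_{(\lambda)}$, and \eqref{eq:cond-1} follows from condition (f) of Lemma \ref{lem:B-lambda-fns}.
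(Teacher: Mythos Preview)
Your proof is correct. The equivalence of \eqref{eq:cond-1} and \eqref{eq:cond-2}, the extension to smooth vectors, and the explicit constant are handled essentially as in the paper: the paper simply cites Lemma~\ref{lem:B-lambda-fns} for the equivalence and \cite{CHH89} for the constant, while you spell out the Peter--Weyl reproducing-kernel estimate that underlies the latter.

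For the irreducible case your argument is genuinely different from the paper's. The paper stays on the function side: it uses Lemma~\ref{lem:A-conv} and the eigenfunction identity \eqref{eq:functional} to show that
\[
\mathcal{A}\bigl(\lip\pi(\cdot)\pi(f')\xi,\pi(f^{*})\eta\rip\bigr)
=\mathcal{A}\bigl(f'*\lip\pi(\cdot)\xi,\eta\rip*f\bigr)
\le C(f,f',\xi,\eta)\,\phi_\lambda,
\]
then observes that $\pi(\fnspace{C}_c(G))\xi$ and $\pi(\fnspace{C}_c(G))\eta$ are dense and contain all $K$-finite vectors (admissibility), and feeds this back into Lemma~\ref{lem:B-lambda-fns}, whose (c)$\Rightarrow$(b) step in turn rests on the abstract spectral-radius formula of Lemma~\ref{lem:SW}. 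You instead pass to the operator side via Lemma~\ref{lem:duality}, use submultiplicativity of $\norm{\cdot}_{(\lambda)}$, and then invoke the CCR property of semisimple Lie groups to extract $\norm{\pi(g)}$ as the top eigenvalue of the compact operator $\pi(g^{*}*g)$. Your route bypasses the need to identify a common dense subspace (and hence the admissibility step), and replaces the abstract Lemma~\ref{lem:SW} by an elementary eigenvalue argument; the price is Harish-Chandra's CCR theorem, which the paper's proof does not require. Both approaches are valid, and each trades one deep input for another.
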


\begin{proof}
Lemma \ref{lem:B-lambda-fns} shows that \eqref{eq:cond-1} and \eqref{eq:cond-2} are equivalent.
The description of the constant $C(\xi,\eta)$  is part of one of the main theorems of \cite{CHH89}, where an estimate in terms of the $K$-types that appear is also proved, for the particular case of a tempered representation; there are no problems in extending the argument.

Suppose that \eqref{eq:cond-3} holds.
Lemma \ref{lem:A-conv} and formula \eqref{eq:functional} imply that, for all $f, f' \in \fnspace{C}_c(G)$,
\[
\begin{aligned}
\mathcal{A} (\lip \pi(\cdot) \pi(f') \xi, \pi(f^*) \eta\rip )
&= \mathcal{A} (f' * \lip \pi(\cdot) \xi, \eta\rip * f) \\
&\leq \mathcal{A} (f') * (\mathcal{A}\lip \pi(\cdot) \xi, \eta\rip) * \mathcal{A}(f) \\
&\leq C(\xi,\eta) \fn\mathcal{A} (f') * \phi_\lambda * \mathcal{A}(f) \\
&= C(f,f',\xi,\eta) \fn\phi_\lambda.
\end{aligned}
\]
Since $\{ \pi(f') \xi : f' \in \fnspace{C}_c(G)\}$ and $\{ \pi(f) \eta : f \in \fnspace{C}_c(G)\}$ are dense subspaces of $\cH_\pi$ that contain the space $\cH_\pi^K$ of $K$-finite vectors, another application of Lemma \ref{lem:B-lambda-fns} completes the proof.
\end{proof}

\section{Applications}

These ideas give new information on the representation theory of semisimple groups.

\subsection*{Irreducible unitary representations}
There is a connection between the parameters of the Langlands classification of a unitary representation and the spherical function that controls the decay of the asymptotic expansion.

\begin{theorem}\label{thm:irred-case}
Suppose that $\pi \in \hat{G}$, and that $\lambda$ is a leading exponent of the asymptotic expansion of the $K$-finite matrix coefficients of $\pi$, as in Theorem \ref{thm:asymptotics}.
Then
\[
\mathcal{A}(u) \leq \norm{u}_{\fnspace{A}_\pi} \fn\phi_{\Re(\lambda)}
\qquad\forall u \in \fnspace{A}_\pi(G).
\]
\end{theorem}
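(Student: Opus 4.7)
The strategy is to reduce the statement to condition (d) of Lemma \ref{lem:B-lambda-fns}, which via the equivalence with condition (g) gives exactly the desired estimate
\[
\mathcal{A}(u) \leq \norm{u}_{\fnspace{B}} \fn \phi_{\Re(\lambda)}
= \norm{u}_{\fnspace{A}_\pi} \fn \phi_{\Re(\lambda)}
\qquad\forall u \in \fnspace{A}_\pi(G).
\]
To apply the lemma I first need $\Re(\lambda) \in \Lie{a}^{*,\Her}$; this follows from Kostant's criterion combined with the necessary unitarity condition mentioned after Theorem \ref{thm:Langlands-Knapp-clsfctn}, which supplies $w \in W$ with $w \Re(\lambda) = -\Re(\lambda)$. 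What remains is to produce, for a dense $\pi(K)$-invariant subspace of $K$-finite vectors $\xi, \eta$, a pointwise bound $\labs \lip \pi(\cdot) \xi, \eta \rip \rabs \leq C(\xi, \eta) \fn \phi_{\Re(\lambda)}$.

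\textbf{Step 1: asymptotic bound away from the walls.} Fix $K$-finite $\xi, \eta$ with $K$-types $\sigma, \tau$. Theorem \ref{thm:asymptotics} expresses $\lip \pi(\exp H) \xi, \eta\rip$, inside any proper subcone of $\Lie{a}^+$, as a finite sum
\[
\sum_{l} q_l(H) \fn \expe^{(\lambda_l - \rho)(H)} \fn \psi_l(H),
\]
where every $\lambda_l = w \lambda$ with $w \in W$, each $q_l$ is a polynomial of degree at most $N$ (independent of $\sigma, \tau$), and $\psi_l(H)$ stays bounded as $H \to \infty$. Since $w \Re(\lambda) \leq \Re(\lambda)$ in the Weyl order, each $\labs \expe^{(\lambda_l - \rho)(H)} \rabs$ is dominated by $\expe^{(\Re(\lambda) - \rho)(H)}$, so the sum is at most $C(\xi,\eta)(1 + |H|)^N \expe^{(\Re(\lambda) - \rho)(H)}$. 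Theorem \ref{thm:NPP} gives $\phi_{\Re(\lambda)}(\exp H) \asymp p_{\Re(\lambda)}(H) \expe^{(\Re(\lambda) - \rho)(H)}$ throughout $(\Lie{a}^+)\afterbar$, and an elementary comparison of polynomials shows $(1 + |H|)^N \leq C p_{\Re(\lambda)}(H)^M$ on any subcone. After absorbing the excess power of $p_{\Re(\lambda)}$ into the constant (which is legitimate since $\phi_{\Re(\lambda)}(\exp H)$ grows like a polynomial times the exponential), we obtain the desired bound on each proper subcone.

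\textbf{Step 2: passing from subcones to all of $G$, and closing the argument.} On a compact neighbourhood of the identity both $\labs \lip \pi(\cdot)\xi,\eta\rip \rabs$ and $\phi_{\Re(\lambda)}$ are bounded, and $\phi_{\Re(\lambda)}$ is bounded below by positivity and continuity, so the bound holds there after enlarging $C(\xi,\eta)$. The main technical obstacle is the uniformity near the walls of the Weyl chamber, where the asymptotic expansion of Theorem \ref{thm:asymptotics} breaks down: I will appeal to the uniform (up to the walls) form of Harish-Chandra's leading-exponent bound, or alternatively use the integral presentation \eqref{eq:LK-matrix-coeff} of a $K$-finite matrix coefficient of the Langlands quotient, which directly reproduces $\phi_{\Re(\lambda)}$ via the Iwasawa integral formula \eqref{eq:HC-spherical-fn-int-formula} after bounding the smooth $\cH_\sigma$-valued sections $f, f'$ in sup norm on $K$. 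With the pointwise bound in hand on a dense $\pi(K)$-invariant space of $K$-finite vectors, Lemma \ref{lem:B-lambda-fns} upgrades this to condition (g), giving the stated estimate for every $u \in \fnspace{A}_\pi(G)$.
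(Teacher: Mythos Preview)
Your overall reduction matches the paper: both invoke Theorem~B (equivalently Lemma~\ref{lem:B-lambda-fns}) so that it suffices to bound a single nontrivial $K$-finite matrix coefficient pointwise by $C\,\phi_{\Re(\lambda)}$, with no control on $C$.  The paper does not attempt your Step~1 at all; it goes straight to the integral realisation of the Langlands quotient (your option~(b) in Step~2) and obtains a global bound, so the subcone asymptotics are never needed.  Your Step~1 also has a loose end: you bound by $(1+|H|)^{N}\expe^{(\Re\lambda-\rho)(H)}$ with the $G$-dependent $N$, but when $\Re(\lambda)$ is regular the comparison polynomial $p_{\Re(\lambda)}$ is identically $1$, so the inequality $(1+|H|)^{N}\leq C\,p_{\Re(\lambda)}(H)^{M}$ fails and nothing can be ``absorbed into the constant''.

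The real gap is in Step~2, option~(b).  Bounding the sections $f,f'$ in sup norm on $K$ and using the invariance \eqref{eq:induced-rep-invariance} yields
\[
|u(x)|\leq C\int_{K}\expe^{-(\Re\lambda-\rho_{Q})(\log\tilde a(x^{-1}k))}\wrt k,
\]
where $\tilde a$ is only the $A_{Q}$-component in the $G=KM_{Q}A_{Q}N_{Q}$ decomposition.  This is \emph{not} the Harish-Chandra integral \eqref{eq:HC-spherical-fn-int-formula} for $\phi_{\Re(\lambda)}$: you have $\rho_{Q}$ on $A_{Q}$ instead of $\rho$ on $A$, and the discrepancy is precisely the missing $\rho_{M}$-weight coming from the $M_{Q}$-direction.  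The paper closes this gap by embedding the discrete series $\sigma$ into $\fnspace{L}^{2}(M_{Q})$, Iwasawa-decomposing $M_{Q}=(K\cap M_{Q})(A\cap M_{Q})(N\cap M_{Q})$, and applying Cauchy--Schwarz in $\fnspace{L}^{2}(M_{Q})$; this produces the extra factor $\expe^{\rho_{M}(\log a_{Q})}$ needed to reconstitute $\rho=\rho_{Q}+\rho_{M}$ on the full Iwasawa $A$-component $a_{Q}\tilde a$, after which the integral over $K$ is exactly $\phi_{\Re(\lambda)}(x)$.  Your sketch names the right formula but skips this essential mechanism, without which the exponent is simply wrong.
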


\begin{proof}
By Theorem B, it will suffice to show that a nontrivial $K$-finite matrix coefficient of $\pi$ satisfies an estimate of the desired form, but without control of the factor multiplying the spherical function.

By  Theorem \ref{thm:Langlands-Knapp-clsfctn}, the matrix coefficients of $\pi$ are matrix coefficients of an induced representation $\Ind_{Q}^G \sigma \otimes \mu \otimes \iota$, where  $\mu$ is hermitean and $\sigma$ is a discrete series representation of $M_Q$.
We may identify $\sigma$ with a subrepresentation of the left regular representation of $M_Q$.
It follows from \eqref{eq:induced-rep-invariance}, \eqref{eq:LK-matrix-coeff} and this identification that a nonzero $K$-finite matrix coefficient $u$ may be written in the form
\begin{equation}\label{eq:LK-matrix-coeff-1}
u(x) = \int_{K} \int_{\MQ} f(x^{-1} k', m)  \bar f'(k', m)   \wrt m \wrt k' ,
\end{equation}
where $f$ and $f'$ are $K$-finite functions on $G$ with values in $\fnspace{L}^2(\MQ)$ that satisfy the condition
\begin{equation}\label{eq:invariance-prop-1}
f(xman, m')
= \expe^{-(\lambda-\rho_Q)(\log(a)) } f(x, mm')
\end{equation}
for all $x \in G$, all $m,m' \in M_Q$, all $a \in A_Q$ and all $n \in n_Q$.

Now the group $\MQ$ has an Iwasawa decomposition $(K \cap \MQ) (A \cap \MQ) (N \cap \MQ)$, and the Haar measure on $\MQ$ may be written
\[
\begin{aligned}
\int_{\MQ} h(m) \wrt m
&= \int_{K \cap \MQ}\int_{N \cap \MQ} \int_{A \cap \MQ} h(kna) \wrt a \wrt n \wrt k \\
&= \int_{K \cap \MQ}\int_{A \cap \MQ} \int_{N \cap \MQ} \expe^{2\rho_M(\log(a))}
h(kan) \wrt n \wrt a \wrt k,
\end{aligned}
\]
where $\rho_{M} = \half \sum_{\alpha \in \Sigma^+ \cap \Sigma(Q)} \dim(\Lie{g}_\alpha) \alpha$.
The Iwasawa decomposition for $\MQ$ means that we may write $m \in \MQ$ in the form $k_Q a_Q n_Q$, where $k_Q \in {K \cap \MQ}$, $a_Q \in A \cap \MQ$ and $n_Q \in N \cap \MQ$.

The assumptions on $f$ imply that
\[
\lpar \int_{\MQ} \labs f(k', m) \rabs^2 \wrt m \rpar^{1/2}
\]
is uniformly bounded as $k'$ varies over $K$.
Since $f(k'k, m) = f(k',km)$ for all $k$ in $K$, $k' \in K \cap \MQ$ and all $m \in \MQ$, the $\fnspace{L}^2(\MQ)$-function $f(k' ,\cdot)$ is $K \cap \MQ$-finite under left translations, and the finitely many irreducible representations of $K \cap \MQ$ that appear are independent of $k \in K$.
It follows from the $K$-finiteness of the $\fnspace{L}^2(\MQ)$-valued function $f$ on $K$ that
\begin{equation}\label{eq:quasi-reg-est}
\lpar \int_{A \cap \MQ}\int_{N\cap \MQ} \labs f(k', a n) \rabs^2 \expe^{2\rho_M(\log(a))} \wrt n \wrt a \rpar^{1/2} \leq C,
\end{equation}
uniformly for $k' \in K$.
The same holds for $f'$, with a constant $C'$.

From the formula \eqref{eq:LK-matrix-coeff-1} for the matrix coefficient $u$, the Iwasawa decomposition of $M_Q$, the invariance property \eqref{eq:invariance-prop-1}, the right invariance of Haar measure on $K$,  H\"older's inequality, and \eqref{eq:quasi-reg-est} for $f'$, we see that
\begin{align*}
|u(x)|
&= \labs \int_{K} \int_{\MQ} f(x^{-1} k', m)  \bar f'(k', m)   \wrt m \wrt k' \rabs\\
&\leq \int_{K} \int_{K \cap \MQ} \int_{A \cap \MQ} \int_{N \cap \MQ}
\labs f(x^{-1}k' , kan) \rabs \labs f'(k', kan) \rabs  \expe^{2\rho_M(\log(a))}  \wrt n \wrt a \wrt k \wrt k' \\
&= \int_{K} \int_{K \cap \MQ} \int_{A \cap \MQ} \int_{N \cap \MQ}
\labs f(x^{-1}k'k , an) \rabs \labs f'(k'k, an) \rabs  \expe^{2\rho_M(\log(a))}  \wrt n \wrt a \wrt k \wrt k' \\
&= \int_{K} \int_{A \cap \MQ} \int_{N \cap \MQ}
\labs f(x^{-1}k' , an) \rabs \labs f'(k', an) \rabs  \expe^{2\rho_M(\log(a))}  \wrt n \wrt a \wrt k' \\
&\leq \int_{K}  \lpar \int_{A \cap \MQ} \int_{N \cap \MQ}
\labs f(x^{-1}k', an)\rabs^2  \expe^{2\rho_M(\log(a))}   \wrt n \wrt a \rpar^{1/2} \\
&\hspace{3cm} \times \lpar \int_{A \cap \MQ} \int_{N \cap \MQ}
\labs  f'(k', an) \rabs^2  \expe^{2\rho_M(\log(a))} \wrt n \wrt a \rpar^{1/2}  \wrt k' \\
&\leq C' \int_{K}  \lpar \int_{A \cap \MQ} \int_{N \cap \MQ}
\labs f(x^{-1}k', an)\rabs^2  \expe^{2\rho_M(\log(a))}   \wrt n \wrt a \rpar^{1/2}  \wrt k' .
\end{align*}

We write
\[
x^{-1}k'
= \tilde k \tilde m \tilde a \tilde n
= \tilde k k_Q a_Q n_Q \tilde a \tilde n
\]
for some choice of $\tilde k \in K$, $\tilde m \in \MQ$, $\tilde a \in A_Q$ and $\tilde n \in N_Q$, and then $k_Q \in K \cap \MQ$, $a_Q \in A\cap\MQ$ and $n_Q \in N \cap \MQ$.
Since $A_Q$ and $\MQ$ commute,
\[
x^{-1}k' = (\tilde k k_Q) (a_Q \tilde a) (n_Q \tilde n),
\]
and $\tilde k k_Q  \in  K$, $a_Q \tilde a \in A$ and $n_Q \tilde n \in N$.
In particular, $a_Q \tilde a$ is the $A$-component of the Iwasawa decomposition in $G$ of $x^{-1}k'$.

From the invariance formula \eqref{eq:invariance-prop-1} and integration arguments,
\begin{equation*}
\begin{aligned}
&\int_{A \cap \MQ} \int_{N \cap \MQ}
\labs f(x^{-1}k', an)\rabs^2  \expe^{2\rho_M(\log(a))}   \wrt n \wrt a  \\
&   =  \int_{A \cap \MQ} \int_{N \cap \MQ}
\labs f(\tilde k k_Q a_Q n_Q \tilde a \tilde n, an)\rabs^2  \expe^{2\rho_M(\log(a))}   \wrt n \wrt a  \\
&   =\int_{A \cap \MQ} \int_{N \cap \MQ}
\labs  \expe^{-(\lambda-\rho_Q)(\log(\tilde a)) }f(\tilde k k_Q  , a_Q n_Q an)\rabs^2  \expe^{2\rho_M(\log(a))}   \wrt n \wrt a  \\
&   =  \expe^{-2(\Re(\lambda)-\rho_Q)(\log(\tilde a)) } \expe^{2\rho_M(\log(a_Q))}  \\
&\hspace{3cm} \int_{A \cap \MQ} \int_{N \cap \MQ}
\labs  f(\tilde k k_Q , a_Qn_Q an)\rabs^2   \expe^{2\rho_M(\log(a_Qa))}  \wrt n \wrt a \\
&\leq C^2  \expe^{-2(\Re(\lambda)-\rho_Q)(\log(\tilde a)) } \expe^{2\rho_M(\log(a_Q))} .
\end{aligned}
\end{equation*}
It may be checked that $\rho_Q(\log(\tilde a))+\rho_M(\log(a_Q))  = \rho(\log(\tilde a a_Q) $ and $\lambda(\log(a_Q) = 0$, so
\begin{equation*}
\begin{aligned}
|u(x)|
& \leq C'C \int_{K} \expe^{-(\Re(\lambda)-\rho_Q)(\log(\tilde a)) } \expe^{\rho_M(\log(a_Q))}   \wrt k' \\
& = C'C \int_{K} \expe^{-(\Re(\lambda)-\rho)(\log(\tilde a a_Q)) }   \wrt k' \\
& =   C'C \fn \phi_{\Re(\lambda)}(x),
\end{aligned}
\end{equation*}
as required.
\end{proof}

This in turn allows us to describe the decay of all unitary representations.

\begin{corollary}\label{cor:minimal}
Suppose that $\pi \in \bar{G}$.
Then there exists a unique minimal $\phi_{\lambda^*}$, where $\lambda^* \in ((\Lie{a}^*)^+)\afterbar$, such that
\begin{equation}\label{eq:Glambda}
\mathcal{A} \lip \pi(\cdot) \xi, \eta\rip
\leq \lnorm \xi \rnorm_{\Hil_\pi} \lnorm \eta \rnorm_{\Hil_\pi}
    \phi_{\lambda^*}
\qquad\forall \xi, \eta \in \cH_\pi.
\end{equation}
\end{corollary}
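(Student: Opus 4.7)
My plan is to reduce to the irreducible case via Theorem A, invoke Theorem \ref{thm:irred-case} for each irreducible weak subrepresentation, and then extract a minimum from the resulting collection of admissible parameters. Let $L(\pi) \subseteq ((\Lie{a}^*)^+)\afterbar$ denote the set of $\mu$ for which \eqref{eq:Glambda} holds with $\lambda^* = \mu$. Theorem A shows that $\mu \in L(\pi)$ if and only if the analogous estimate holds for every $\sigma \in \hat G$ with $\sigma \preceq \pi$, and Theorem \ref{thm:irred-case} furnishes a leading exponent $\lambda_\sigma \in ((\Lie{a}^*)^+)\afterbar$ for which $\sigma$ satisfies the estimate with $\mu = \lambda_\sigma$. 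Invoking Theorem \ref{thm:compare-phis}, this rewrites cleanly as
\[
L(\pi) = \{\mu \in ((\Lie{a}^*)^+)\afterbar : \lambda_\sigma \in \conv(W\mu) \text{ for every } \sigma \in \hat{G} \text{ with } \sigma \preceq \pi\}.
\]

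Next I would verify that $L(\pi)$ is a nonempty, closed, bounded subset of the dominant chamber, upward-closed in the dominance order. Non-emptiness: $\rho \in L(\pi)$, since $\phi_\rho \equiv 1$ dominates the normalized matrix coefficients of any unitary representation. Closedness: continuity of $\mu \mapsto \phi_\mu(x)$ via Harish-Chandra's integral formula \eqref{eq:HC-spherical-fn-int-formula}. Boundedness: membership in $L(\pi)$ forces $\phi_\mu$ to be a bounded function on $G$, and so by the Helgason--Johnson criterion $\mu$ lies in the compact set $\conv(W\rho) \cap ((\Lie{a}^*)^+)\afterbar$. Upward closure in the dominance order follows from Theorem \ref{thm:compare-phis}. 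A Zorn-type argument then produces at least one dominance-minimal element: every decreasing chain in the compact set $L(\pi)$ admits a subsequential limit, which still lies in $L(\pi)$ by closedness, and provides a lower bound for the chain; Zorn's lemma delivers a dominance-minimal $\lambda^* \in L(\pi)$.

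The main obstacle is the uniqueness assertion, which requires $L(\pi)$ to have a unique dominance-minimum rather than merely some collection of pairwise incomparable minimal elements. The natural candidate for $\lambda^*$ is the dominance-supremum
\[
\lambda^* = \sup\{\lambda_\sigma : \sigma \in \hat{G}, \ \sigma \preceq \pi\},
\]
equivalently, the unique dominant parameter whose Weyl-invariant convex hull coincides with the closed convex Weyl-invariant envelope $C := \overline{\conv}\bigl(\bigcup_{\sigma \preceq \pi} W \lambda_\sigma\bigr)$; every $\mu \in L(\pi)$ satisfies $\conv(W\mu) \supseteq C$, so minimality of $\mu$ forces $\conv(W\mu) = C$ and pins $\mu$ down uniquely in the dominant chamber. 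Verifying that $C$ really has the form $\conv(W\lambda^*)$ for some dominant $\lambda^*$ reduces to the lattice property of the majorisation order on the dominant Weyl chamber---that bounded subsets have joins---which I would establish geometrically by exhibiting a unique dominance-maximal point of the compact convex set $C \cap ((\Lie{a}^*)^+)\afterbar$. This lattice-theoretic step is the delicate technical ingredient; once in hand, uniqueness of $\lambda^*$ is immediate and the corollary follows.
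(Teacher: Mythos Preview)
Your overall strategy---reduce to irreducibles via Theorem~A, invoke Theorem~\ref{thm:irred-case}, then take a supremum over the resulting set of dominant parameters---matches the paper's. The divergence is in execution. The paper bypasses both your Zorn argument and your uniqueness discussion with a single explicit construction: it introduces the basis $\{H_1,\dots,H_r\}$ of $\Lie{a}$ dual to the simple roots (so $\alpha_i(H_j)=\delta_{ij}$), observes that for dominant $\lambda,\mu$ the condition $\lambda\in\conv(W\mu)$ is equivalent to $\lambda(H_j)\le\mu(H_j)$ for every $j$, and then simply \emph{defines} $\lambda^*$ coordinate-wise by $\lambda^*(H_j)=\sup_{\lambda\in\Lambda}\lambda(H_j)$. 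This one line delivers existence, uniqueness, and the lattice property simultaneously, and is exactly the ``delicate technical ingredient'' you flagged but did not supply.

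Your uniqueness paragraph also contains a genuine error. You assert that minimality of $\mu\in L(\pi)$ forces $\conv(W\mu)=C$, where $C=\overline{\conv}\bigl(\bigcup_\sigma W\lambda_\sigma\bigr)$, and then propose to verify that $C$ has the form $\conv(W\lambda^*)$. Neither claim is true in general. Take $G$ of type $A_2$ and $\Lambda=\{\omega_1,\omega_2\}$ (the two fundamental weights): then $C$ is the regular hexagon with vertex set $W\omega_1\cup W\omega_2$, while the unique minimal $\lambda^*$ is $\tfrac{2}{3}\rho$ and $\conv(W\lambda^*)$ is a strictly larger hexagon. The correct statement is only that $\conv(W\lambda^*)$ is the \emph{smallest Weyl permutohedron} containing $C$, not that it equals $C$. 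Once you drop that mis-step and use the dual-basis coordinates, the argument you want is immediate and the Zorn detour becomes unnecessary.
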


\begin{proof}
We write $\pi$ as a direct integral of irreducible representations, and apply the previous corollary to each irreducible component representation.
Define the subset $\Lambda$ of $\Lie{a}^{*,\Her}$ to be the set of $\Re(\lambda)$ as $\lambda$ runs over the leading exponents in the asymptotic expansions of the matrix coefficients of the irreducible representations that are weakly contained in $\pi$.

By fibering $\hat{G}$ according to the $\Re(\lambda)$ that arise in this way, we deduce that
\begin{equation}\label{eq:fibered}
\mathcal{A} \lip \pi(\cdot) \xi, \eta\rip
\leq \int_{\Lambda} \phi_\lambda \wrt \mu(\lambda),
\end{equation}
where $\mu$ is a positive measure on $\Lambda$, and
\[
\int_{\Lambda} \wrt \mu(\lambda)
\leq \norm{\xi}_{\cH_\pi} \norm{\eta}_{\cH_\pi} .
\]
For each $\lambda \in \Lambda$, it is possible to find matrix coefficients of $\pi$ such that the associated measure $\mu$ is supported in arbitrarily small neighbourhoods of $\Lambda$, and so the integral in \eqref{eq:fibered} may be made arbitrarily close to a multiple of $\fn\phi_\lambda$ (in the topology of locally uniform convergence).

To conclude, we let $\{H_1, \dots, H_r\}$ be a basis of $\Lie{a}$ dual to $\{\alpha_1, \dots, \alpha_r\}$, that is, $\lip \alpha_i, H_j \rip = \delta_{ij}$ for all $\alpha_j \in \Delta$ and all $i \in \{1, \dots, r\}$ (where $\delta_{ij}$ denotes the Kronecker delta).
Now $\phi_{\lambda^*}$ controls the decay of the matrix coefficients of $\pi$ if and only if $\lambda(H_j) \geq \lambda(H_j)$ for all $\lambda \in \Lambda$. We therefore choose $\lambda^*$ such that
\[
\lambda^*(H_j) = \sup\{ \lambda_\sigma(H_j) : \lambda \in \Lambda \},
\]
and this $\lambda^*$ has the desired properties, by Theorem
\ref{thm:compare-phis}.
\end{proof}

Note that if the Weyl group $W$ does not contain $-I$, then $\phi_{\lambda^*}$ need not be hermitean.
We decompose $\pi$ because we need hermitean spherical functions to apply Theorem B.

\subsection*{$\fnspace{L}^{q+}(G)$ representations}
These representations have been considered extensively.
It has been long known that all matrix coefficients of all unitary representations of real-rank one groups lie in $\fnspace{L}^{q+}(G)$ when the $K$-finite matrix coefficients do \cite{C83}; the arguments there do not extend easily to the general case.

Recently Samei and Wiersma \cite{SW18} extended this to general semisimple groups, which served as a source of inspiration for this paper.
Essentially, they proved results similar to those in Section 3 of this paper, with ``belonging to $\fnspace{L}^q(G)$'' instead of ``domination of the root-mean-square average by a spherical function''.
The next corollary connects our results and theirs.
Recall that $\bar{G}_{(\lambda^*)}$ denotes the set of unitary representations of $G$ for which \eqref{eq:Glambda} holds.

\begin{corollary}
Suppose that $0< t < 1$.
Then $\bar{G}_{(t\rho)}$ is the collection of unitary representations of $G$ all of whose matrix coefficients lie in $\fnspace{L}^{q+}(G)$, where $q  = 2/(1-t)$.
\end{corollary}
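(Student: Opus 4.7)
My strategy is to identify $\bar{G}_{(t\rho)}$ with the collection of $\fnspace{L}^{q+}(G)$-representations by matching $\fnspace{C}^*_{(t\rho)}(G)$ against the analogous $\fnspace{L}^{q+}$-exotic $C^*$-algebra of Samei and Wiersma \cite{SW18}. Two preliminaries are required. First, since the longest Weyl element $w_0$ satisfies $w_0 \rho = -\rho$, we have $w_0 (t\rho) = -t\rho$, so by Kostant's criterion $t\rho \in \Lie{a}^{*,\Her}$ and Theorem B is applicable. Second, $\phi_{t\rho} \in \fnspace{L}^{q+}(G)$: by Theorem \ref{thm:NPP}, $\phi_{t\rho}(\exp H)$ is comparable on $(\Lie{a}^+)\afterbar$ to $p_{t\rho}(H) \expe^{-(1-t)\rho(H)}$, and by \eqref{eq:Haar-measure} the Haar density on $A^+$ has dominant exponential $\expe^{2\rho(H)}$; hence $\int_G \phi_{t\rho}^s \wrt x$, modulo polynomial corrections, behaves like $\int_{\Lie{a}^+} \expe^{(2-s(1-t))\rho(H)} \wrt H$, which is finite exactly when $s > 2/(1-t) = q$.

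For the inclusion $\bar{G}_{(t\rho)} \subseteq \lset \pi : \text{all matrix coefficients lie in } \fnspace{L}^{q+}(G)\rset$, take $\pi \in \bar{G}_{(t\rho)}$. Condition (e) of Lemma \ref{lem:B-lambda-fns} gives the pointwise bound $\labs \lip \pi(\cdot)\xi,\eta\rip \rabs \leq C(\xi,\eta) \phi_{t\rho}$ for all smooth $\xi, \eta \in \cH_\pi$, so every smooth matrix coefficient of $\pi$ lies in $\fnspace{L}^{q+}(G)$ by the first preliminary. Since the smooth vectors form a dense $\pi(G)$-invariant subspace of $\cH_\pi$, the $\fnspace{L}^{q+}$-analog of Lemma \ref{lem:B-lambda-fns} established in \cite{SW18} propagates this dense bound to all matrix coefficients of $\pi$.

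For the converse, suppose every matrix coefficient of $\pi$ is in $\fnspace{L}^{q+}(G)$. I would verify condition (a) of Lemma \ref{lem:B-lambda-fns} on the dense subspace of $K$-finite vectors: for such $\xi, \eta$, the Samei--Wiersma analog of Lemma \ref{lem:B-lambda-fns} yields $\labs \int_G f(x) \lip \pi(x)\xi,\eta\rip \wrt x \rabs \leq C(\xi,\eta) \norm{f}_{(\fnspace{L}^{q+})}$ for all $f \in \fnspace{C}_c(G)$, where $\norm{\cdot}_{(\fnspace{L}^{q+})}$ denotes the Samei--Wiersma exotic $C^*$-norm attached to $\fnspace{L}^{q+}(G)$. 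The remaining task is the duality estimate $\norm{f}_{(\fnspace{L}^{q+})} \leq C \norm{f}_{(t\rho)}$, which I would establish by expanding $\norm{f}_{(t\rho)} = \int_G \mathcal{A}(f) \phi_{t\rho} \wrt x$, using $\phi_{t\rho} \in \fnspace{L}^{q+}(G)$ from the preliminary, and applying Hölder's inequality. This gives condition (a) of Lemma \ref{lem:B-lambda-fns}, hence $\pi \in \bar{G}_{(t\rho)}$.

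The hardest step is the duality comparison between the two exotic norms, equivalently the identification of $\fnspace{C}^*_{(t\rho)}(G)$ with the Samei--Wiersma $\fnspace{L}^{q+}$-completion of $\fnspace{C}_c(G)$. Both directions ultimately rest on Lemma \ref{lem:SW} (the spectral-radius formula) and on the structural parallel between Lemma \ref{lem:B-lambda-fns} and its $\fnspace{L}^{q+}$-counterpart in \cite{SW18}: pointwise bounds by $\phi_{t\rho}$ and integral $\fnspace{L}^{q+}$ bounds on matrix coefficients are interchangeable modulo Hölder-type estimates involving $\phi_{t\rho}$ itself, but making this interchange precise -- especially on non-$K$-bi-invariant test functions -- is the most technical part of the argument.
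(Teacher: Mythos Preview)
Your forward direction is essentially the paper's: pointwise domination of $K$-finite (or smooth) matrix coefficients by $\phi_{t\rho}$, the estimate $\phi_{t\rho}\in\fnspace{L}^{q+}(G)$ from Theorem~\ref{thm:NPP}, and then \cite{SW18} to propagate to all matrix coefficients.

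The converse, however, has a genuine gap. You want $\norm{f}_{(\fnspace{L}^{q+})}\le C\,\norm{f}_{(t\rho)}$, but your proposed H\"older step bounds $\norm{f}_{(t\rho)}=\int_G\mathcal{A}(f)\,\phi_{t\rho}$ \emph{from above} by something involving an $\fnspace{L}^{p}$-norm of $\mathcal{A}(f)$; it cannot produce a \emph{lower} bound on $\norm{f}_{(t\rho)}$. More structurally, the inequality you seek is a comparison of exotic $C^*$-norms, and since each norm is a supremum over a collection of representations, establishing it directly is tantamount to the corollary itself; there is no cheap route via H\"older on $\phi_{t\rho}$.

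The paper bypasses this entirely. For the converse it first reduces to cyclic and then (via weak containment, using that the $\fnspace{L}^{q+}$ condition with uniform constants passes to weakly contained representations) to irreducible $\pi$. The decisive step, absent from your plan, is a Sobolev embedding argument from \cite{C79}: for irreducible $\pi$ with $K$-finite matrix coefficients in $\fnspace{L}^{q+\epsilon}(G)$, one obtains the \emph{pointwise} bound
\[
\labs\lip\pi(k\exp(H)k')\xi,\eta\rip\rabs
\le C\,\expe^{-2(q+\epsilon)^{-1}\rho(H)}
\le C'\,\phi_{(t+\epsilon)\rho}(\exp H).
\]
Now Theorem~B applies (with $\lambda=(t+\epsilon)\rho$), giving the constant-free estimate $\mathcal{A}\lip\pi(\cdot)\xi,\eta\rip\le\norm{\xi}\norm{\eta}\,\phi_{(t+\epsilon)\rho}$, and one lets $\epsilon\downarrow 0$. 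The passage from integral $\fnspace{L}^{q+}$ information to pointwise information via Sobolev embedding on irreducibles is the missing ingredient; without it there is no way to feed the hypothesis into the $\phi_\lambda$-machinery.
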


\begin{proof}
Suppose that $\pi \in \bar{G}_{(t\rho)}$.
Then the $K$-finite matrix coefficients of $\pi$ are dominated by multiples of $\phi_{t\rho}$, and the integral formula for the Cartan decomposition and the estimates of Narayanan, Pasquale and Pusti \cite{{NPP14}} show that $\phi_{t\rho} \in \fnspace{L}^{q+}(G)$ when $q = 2/(1-t)$.
By \cite[Theorem 1.5]{SW18}, all matrix coefficients of $\pi$ lie in $\fnspace{L}^{q+}(G)$.

Conversely, we must show that if the matrix coefficients of $\pi$ lie in $\fnspace{L}^{q+}(G)$ where $q \geq 2$, then $\pi \in \bar{G}_{(t\rho)}$.
Now $\pi$ may be written as a direct sum of cyclic representations, each of which has a cyclic vector, and it will suffice to show that each cyclic component lies in $\bar{G}_{(t\rho)}$.
In other words, it suffices to suppose that $\pi$ has a cyclic vector.

Suppose then that $\pi$ has a cyclic vector $\theta$ and that $\lip\pi(\cdot)\theta,\theta\rip \in \fnspace{L}^{q+\epsilon}(G)$.
By \cite[Theorem 1.5]{SW18}, it follows that $\fnspace{A}_\pi(G) \subseteq \fnspace{L}^{q+2\epsilon}(G)$ and, by the closed graph theorem, that there is a constant $C$, possibly depending on $\pi$, $q$ and $\epsilon$, such that
\[
\norm{\lip\pi(\cdot)\xi,\eta\rip}_{q+2\epsilon}
\leq C \norm{\xi}_{\cH_\pi} \norm{\eta}_{\cH_\pi}
\qquad\forall \xi, \eta \in \cH_\pi.
\]
This condition passes to the irreducible unitary representations $\sigma$ that are weakly contained in $\pi$.
If we can show that each such $\sigma$ satisfies
\[
\mathcal{A} \lip \sigma(\cdot) \xi, \eta\rip
\leq \lnorm \xi \rnorm_{\Hil_\sigma} \lnorm \eta \rnorm_{\Hil_\sigma}
    \phi_{t\rho}
\qquad\forall \xi, \eta \in \cH_\sigma,
\]
then $\pi$ will have the same property.
Thus without loss of generality, we may suppose that $\pi$ is irreducible.

Suppose finally that $\pi$ is irreducible and $\theta \in \cH_\pi \setminus \{0\}$ such that $\lip\pi(\cdot)\theta,\theta\rip \in \fnspace{L}^{q+}(G)$.
It follows by a convolution argument that $\lip\pi(\cdot)\xi,\eta\rip \in \fnspace{L}^{q+}(G)$ for all $K$-finite vectors $\xi$ and $\eta$.
By a Sobolev embedding argument, as in \cite[Corollaire 2.2.4]{C79}, and the estimates for the spherical functions $\phi_{t\rho}$ when $0 < t < 1$, we deduce that
\[
\labs \lip\pi(k \exp(H) k')\xi,\eta\rip \rabs
\leq C \exp( - 2(q+\epsilon)^{-1} \rho(H))
\leq C' \phi_{(t+\epsilon)\rho}(\exp H)
\]
for all $H \in (\Lie{a}^+)^-$ and all $k, k' \in K$, and for all $\epsilon \in \R^+$.
We lose control of the constants as $\epsilon$ tends to $0$.

By Theorem B, it follows that
\[
\mathcal{A} \lip \sigma(\cdot) \xi, \eta\rip
\leq \lnorm \xi \rnorm_{\Hil_\sigma} \lnorm \eta \rnorm_{\Hil_\sigma}
    \phi_{(t+\epsilon)\rho}
\qquad\forall \xi, \eta \in \cH_\sigma,
\]
for all positive $\epsilon$, and letting $\epsilon$ tend to $0$ concludes the proof.
\end{proof}

\subsection*{Tensor products}
We get some information about tensor products.
\begin{corollary}
Take $\pi \in \bar{G}_{(\lambda)}$ and $\pi' \in \bar{G}_{(\lambda')}$, where $\lambda, \lambda' \in (\Lie{a}^+)\afterbar$.
Then $\pi \otimes\pi' \in \bar{G}_{(\lambda'')}$, where $\lambda'' \in (\Lie{a}^*)\afterbar$ and $\lambda'' \geq \lambda+\lambda'-\rho$.
\end{corollary}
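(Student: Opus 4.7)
The plan is to reduce to $K$-finite matrix coefficients via Theorem B, exploit the fact that matrix coefficients of the diagonal $G$-action on a tensor product factorise on simple tensors, and then dominate the resulting product of spherical functions by a single spherical function via the sharp asymptotics of Theorem \ref{thm:NPP}.

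Take $K$-finite vectors $\xi_1, \eta_1 \in \cH_\pi$ and $\xi_2, \eta_2 \in \cH_{\pi'}$.
The hypotheses $\pi \in \bar{G}_{(\lambda)}$ and $\pi' \in \bar{G}_{(\lambda')}$ together with Lemma \ref{lem:B-lambda-fns}(d) yield pointwise bounds
\[
\labs \lip \pi(\cdot)\xi_1,\eta_1\rip \rabs \leq C_1 \fn \phi_\lambda
\qquad\text{and}\qquad
\labs \lip \pi'(\cdot)\xi_2,\eta_2\rip \rabs \leq C_2 \fn \phi_{\lambda'}.
\]
On $\cH_\pi \otimes \cH_{\pi'}$ with diagonal $G$-action, simple-tensor matrix coefficients factor as products:
\[
\lip (\pi \otimes \pi')(x)(\xi_1 \otimes \xi_2), \eta_1 \otimes \eta_2 \rip
= \lip \pi(x)\xi_1,\eta_1\rip \fn \lip \pi'(x)\xi_2,\eta_2\rip,
\]
so $\labs \lip (\pi \otimes \pi')(x)(\xi_1 \otimes \xi_2), \eta_1 \otimes \eta_2 \rip \rabs \leq C_1 C_2 \fn \phi_\lambda(x) \fn \phi_{\lambda'}(x)$ for all $x \in G$.

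The key step, and the chief obstacle, is the spherical-function comparison: whenever $\lambda'' \in \Lie{a}^{*,\Her} \cap ((\Lie{a}^*)^+)\afterbar$ satisfies $\lambda'' \geq \lambda + \lambda' - \rho$ in the Weyl-chamber order, I claim that $\phi_\lambda \fn \phi_{\lambda'} \leq C \fn \phi_{\lambda''}$. Applying the upper NPP bound to each of $\phi_\lambda$ and $\phi_{\lambda'}$ and the matching lower NPP bound to $\phi_{\lambda''}$ gives, on $(\Lie{a}^+)\afterbar$,
\[
\frac{\phi_\lambda(\exp H) \fn \phi_{\lambda'}(\exp H)}{\phi_{\lambda''}(\exp H)}
\leq C \fn \frac{p_\lambda(H) \fn p_{\lambda'}(H)}{p_{\lambda''}(H)} \fn \expe^{(\lambda + \lambda' - \rho - \lambda'')(H)}.
\]
The hypothesis forces the exponential factor to be non-increasing along $\Lie{a}^+$; when the inequality $\lambda'' \geq \lambda + \lambda' - \rho$ is strict on every extremal ray of $\Lie{a}^+$ the exponential decay absorbs the polynomial growth, and the boundary case is handled either by enlarging $\lambda''$ by $\epsilon \omega$ for an arbitrary dominant weight $\omega$ and letting $\epsilon \to 0^+$, or by comparing the polynomials directly when $\lambda''$ lies on the same wall as $\lambda + \lambda' - \rho$. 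The estimate then extends from $A^+$ to all of $G$ by $K$-bi-invariance of every $\phi_\mu$ and the Cartan decomposition.

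Combining the two parts above, for every pair of $K$-finite simple tensors the matrix coefficient of $\pi \otimes \pi'$ is dominated pointwise by a constant multiple of $\phi_{\lambda''}$. Since the algebraic tensor product of the $K$-finite subspaces of $\cH_\pi$ and $\cH_{\pi'}$ is a dense subspace of $\cH_\pi \otimes \cH_{\pi'}$ whose vectors are $K$-finite for the diagonal $K$-action, Lemma \ref{lem:B-lambda-fns}(d)$\Rightarrow$(b) applied to $\pi \otimes \pi'$ and the parameter $\lambda''$ gives $\pi \otimes \pi' \in \bar{G}_{(\lambda'')}$, as claimed.
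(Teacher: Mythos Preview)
Your argument is the paper's: factorise matrix coefficients on simple tensors, dominate $\phi_\lambda\phi_{\lambda'}$ by a multiple of $\phi_{\lambda''}$, and bootstrap from a dense $K$-finite subspace via Theorem~B (Lemma~\ref{lem:B-lambda-fns}). The paper's own proof is a two-sentence sketch that does not spell out the spherical-function comparison at all, so your NPP-based justification of $\phi_\lambda\phi_{\lambda'}\lesssim\phi_{\lambda''}$ is additional detail the paper leaves implicit.

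One small caution on the boundary case: the $\epsilon$-perturbation works cleanly only if you take the limit $\epsilon\to 0^+$ \emph{after} applying Theorem~B, i.e.\ at the level of the constant-free estimate \eqref{eq:cond-1} (where $\phi_{\lambda''+\epsilon\omega}\to\phi_{\lambda''}$ pointwise suffices), rather than at the level of the pointwise product bound $\phi_\lambda\phi_{\lambda'}\leq C_\epsilon\phi_{\lambda''+\epsilon\omega}$, where the constants $C_\epsilon$ may blow up; and this route needs $\lambda''+\epsilon\omega\in\Lie{a}^{*,\Her}$, which is not automatic for arbitrary dominant $\omega$.
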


\begin{proof}
The finite sums of products of matrix coefficients of $\pi$ and matrix coefficients of $\pi'$ forms a dense subset of the set of matrix coefficients of the tensor product $\pi \otimes\pi'$; Theorem B is used to pass from the dense subspace to the whole Hlbert space of $\pi \otimes\pi'$.
\end{proof}

\subsection*{Classification of unitary representations}
The following corollary of Corollary \ref{cor:minimal} is a minor observation on the Langlands classification of unitary representations.

\begin{corollary}
Let $\pi$ be an irreducible unitary representation $\pi$ of $G$, and $\{\lambda_j\}$ be the set of leading exponents in the asymptotic expansion of the matrix coefficients of $\pi$, as in Theorems \ref{thm:asymptotics} and \ref{thm:Langlands-Knapp-clsfctn}.
Let $Q$ be the parabolic subgroup determined by the set  $\Delta_Q$ of simple roots given by $\alpha_i \in \Delta_Q$ if and only if $\lip\Re(\lambda_j), \alpha_i\rip = 0$ for all $j$.
Then $\pi \in \bar{G}_{H,\temp}$ when $H = M_Q$ but not when $H = M_{Q'}$, where $Q'$ is a larger parabolic subgroup of $G$ than $Q$.
\end{corollary}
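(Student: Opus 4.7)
The plan is to derive both halves of the statement from the pointwise decay estimate of Theorem \ref{thm:irred-case} combined with the sharp spherical-function asymptotics of Theorem \ref{thm:NPP}. Write $\Lie{a}_M$ for the span of $\{H_\alpha:\alpha\in\Delta_Q\}$; then $A\cap M_Q=\exp\Lie{a}_M$ is the split Cartan of $M_Q$, $\Lie{a}=\Lie{a}_M\oplus\Lie{a}_Q$, and the hypothesis forces $\Re(\lambda_1)\in((\Lie{a}^*)^+)\afterbar$ to be orthogonal to every $\alpha\in\Delta_Q$. Hence $\Re(\lambda_1)$ is fixed by the Weyl subgroup $W_M$ generated by the simple reflections in $\Delta_Q$, and $\Re(\lambda_1)|_{\Lie{a}_M}=0$; the standard permutation-of-roots argument (each $s_\alpha$ for $\alpha\in\Delta_Q$ permutes $\Sigma^+\setminus\Sigma(Q)$) shows that $\rho_Q$ is likewise $W_M$-invariant, so $\rho_Q|_{\Lie{a}_M}=0$ and $\rho|_{\Lie{a}_M}$ coincides with the half-sum $\rho_M$ of positive roots of $M_Q$.

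For the $M_Q$-temperedness, Theorem \ref{thm:irred-case} together with Theorem B gives the uniform bound
\[
|\lip\pi(k_1\exp(H)k_2)\xi,\eta\rip|\leq C(\xi,\eta)\,\phi_{\Re(\lambda_1)}(\exp H),\qquad H\in\Lie{a},\,k_1,k_2\in K,
\]
for $K$-finite vectors $\xi,\eta\in\cH_\pi$. Using the $K$-bi-invariance of $\phi_{\Re(\lambda_1)}$ to transport $H\in(\Lie{a}_M^+)\afterbar$ into $(\Lie{a}^+)\afterbar$ via some $w\in W$, and then invoking Theorem \ref{thm:NPP}, the $W_M$-fixity of $\Re(\lambda_1)$ and of $\rho_Q$ yields
\[
\phi_{\Re(\lambda_1)}(\exp H)\leq C\,p(H)\,\expe^{-\rho_M(H)}\qquad\forall H\in(\Lie{a}_M^+)\afterbar.
\]
The Cartan decomposition of $M_Q$ has Haar Jacobian with dominant exponential $\expe^{2\rho_M(H)}$, so every $K$-finite matrix coefficient of $\pi$ restricts to $\fnspace{L}^{2+\epsilon}(M_Q)$ for every $\epsilon>0$, and the Cowling--Haagerup--Howe criterion applied within $M_Q$ proves $\pi\in\bar{G}_{M_Q,\temp}$.

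For the maximality, suppose $Q'\supsetneq Q$, so $\Delta_{Q'}\supsetneq\Delta_Q$; pick $\alpha\in\Delta_{Q'}\setminus\Delta_Q$. By the defining property of $\Delta_Q$ there is a $j$ with $\lip\Re(\lambda_j),\alpha\rip\neq 0$, so $\Re(\lambda_j)$ has nonzero restriction to $\Lie{a}_{M_{Q'}}$. Applying Theorem \ref{thm:asymptotics} (after a Weyl motion and the corresponding $N_K(A)$-translate of the $K$-finite vectors to return the probing direction to $(\Lie{a}^+)\afterbar$) exposes a summand in the matrix coefficient with exponential factor $\expe^{(\lambda_j-\rho)(H)}$ that, restricted to $\Lie{a}_{M_{Q'}}$, decays strictly slower than $\expe^{-\rho_{M_{Q'}}(H)}$; this violates $\fnspace{L}^{2+}(M_{Q'})$-integrability, so $\pi\notin\bar{G}_{M_{Q'},\temp}$.

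The main obstacle is the first half's conversion from estimates natural on $(\Lie{a}^+)\afterbar$ of $G$ to estimates on $(\Lie{a}_M^+)\afterbar$ of $M_Q$: the Weyl element used to bring $H\in(\Lie{a}_M^+)\afterbar$ into $(\Lie{a}^+)\afterbar$ need not lie in $W_M$, and one must verify that the combined $W_M$-invariance of $\Re(\lambda_1)$ and $\rho_Q$ guarantees that the surviving exponent is still $-\rho_M$, while the polynomial factors from Theorem \ref{thm:NPP} remain $p(H)$-like under this motion. In the second half, the subtlety is that the witness $\alpha$ need only pair nontrivially with some (possibly non-dominant) $\Re(\lambda_j)$, so one must choose $K$-finite vectors that isolate the appropriate Weyl-conjugate summand in the asymptotic expansion, and verify that this contribution is not cancelled by other summands.
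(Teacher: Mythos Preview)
The paper gives no proof of this corollary; it is stated as ``a minor observation'' following Corollary~\ref{cor:minimal}. Your outline is a natural reconstruction of the intended argument, and the overall strategy---bound $K$-finite matrix coefficients pointwise by $\phi_{\Re(\lambda_1)}$ via Theorems~\ref{thm:irred-case} and~B, then control $\phi_{\Re(\lambda_1)}$ on $A\cap M_Q$ via Theorem~\ref{thm:NPP}, then invoke the $\fnspace{L}^{2+}$ temperedness criterion on $M_Q$---is the right one.

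There is, however, a genuine issue in the step you flag as the ``main obstacle'', and your proposed resolution via $W_M$-invariance alone is not sufficient. For $H\in(\Lie{a}_M^+)^-$ one must pass to the $W$-dominant representative $wH\in(\Lie{a}^+)^-$ with $w$ generally \emph{not} in $W_M$; after applying Theorem~\ref{thm:NPP} the exponent becomes $(\Re(\lambda_1)-\rho)(wH)$. Knowing only that $\Re(\lambda_1)$ is $W_M$-fixed gives no control over $\Re(\lambda_1)(wH)=(w^{-1}\Re(\lambda_1))(H)$, and a direct computation in $\group{SL}(3,\R)$ with $\Re(\lambda_1)=a(1,1,-2)$ and $H=t(1,-1,0)$ shows the exponent is $(3a-2)t$, which exceeds $-\rho_M(H)=-t$ once $a>1/3$. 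So $W_M$-fixity of $\Re(\lambda_1)$ and $\rho_Q$ does \emph{not} force the surviving exponent to be $-\rho_M$.

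What saves the argument is the full strength of the hypothesis ``$\lip\Re(\lambda_j),\alpha_i\rip=0$ for \emph{all} $j$''. Since the $\lambda_j$ range over the $W$-orbit of $\lambda_1$ (Theorem~\ref{thm:asymptotics}), this says $\Lie{a}_M\perp W\Re(\lambda_1)$, whence $\Re(\lambda_1)(wH)=(w^{-1}\Re(\lambda_1))(H)=0$ for \emph{every} $w\in W$ and $H\in\Lie{a}_M$. The exponent then reduces to $-\rho(wH)$, and the standard fact that a dominant weight is maximised on a $W$-orbit at the dominant representative gives $\rho(wH)\geq\rho(H)=\rho_M(H)$ (the last equality because $\rho_Q|_{\Lie{a}_M}=0$). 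This yields $\phi_{\Re(\lambda_1)}(\exp H)\leq C\,p(H)\,\expe^{-\rho_M(H)}$ and the $\fnspace{L}^{2+}(M_Q)$ conclusion follows. (In the $\group{SL}(3)$ example above, the full hypothesis forces $\Delta_Q=\emptyset$, so $M_Q=M$ is compact and the issue evaporates; the point is that your weaker reading of the hypothesis would have allowed a nontrivial $\Delta_Q$ there.) Your negative direction is fine in outline.
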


\subsection*{Representations on homogeneous spaces of $G$}
Benoist and Kobayashi \cite{BK15} considered the quasiregular representation $\pi$ of a semisimple group on functions on the homogeneous space $G/H$, and showed that the matrix coefficients of this representations satisfy pointwise estimates for a dense set of vectors; more precisely, they prove estimates of the form
\begin{equation}\label{eq:Benoist-Kobayashi}
\labs \lip \pi(\exp Y) \xi, \eta\rip \rabs
\leq C(\pi, \xi, \eta) \fn\expe^{-\rho_{\Lie{q}}^{\min} (Y)}
\qquad\forall Y \in \Lie{a}^+
\end{equation}
for vectors $\xi$ and $\eta$ in a dense subspace of $\fnspace{L}^2(G/H)$, and show that the decay term is best possible.
They then deduce $\fnspace{L}^{q+}(G)$ estimates for all matrix coefficients of $\pi$, where $q$ is an even integer, and hence conclude that certain representations cannot appear in the decomposition of the representations on $\fnspace{L}^2(G/H)$ into irreducible components.
Control of the constants that appear in \eqref{eq:Benoist-Kobayashi} is difficult, and so it is not obvious that the representations that appear in the decomposition satisfy similar decay estimates to the original representation.
In light of the estimate of Theorem  \ref{thm:NPP}, we can replace the estimate \eqref{eq:Benoist-Kobayashi} with
\[
\labs \lip \pi(\exp Y) \xi, \eta\rip \rabs
\leq C'(\pi, \xi, \eta) \fn\phi_{\rho_{\Lie{q}}-\rho} (\exp(Y)),
\]
and $\phi_{\rho_{\Lie{q}}-\phi}$  is hermitean because \eqref{eq:Benoist-Kobayashi} is sharp. 
We then use Theorem B to deduce sharper growth restrictions on the matrix coefficients of the representations that appear in the decomposition of $\pi$.

\subsection*{Restrictions of representations to subgroups}

Much as immediately above, if we restrict a unitary representation of a semisimple group $G$ to a closed semisimple (or reductive) subgroup $H$, then the pointwise estimates for $K$-finite matrix coefficients give rise to pointwise estimates for matrix coefficients of the restricted representation corresponding to a dense set of vectors.
These in turn imply estimates for all matrix coefficients of the restricted representation and the representations that appear in its decomposition into irreducibles.

\subsection*{Isolation of the trivial representation}
If $\pi$ in $\hat{G}$ does not weakly contain the trivial representation, then we may choose a spherical function $\phi_\lambda$ in Theorem B that vanishes at infinity in $G$, and $\lambda$ is in the interior of $\conv(W\rho)$.

If $G$ is a semisimple Lie group that does not contain normal subgroups locally isomorphic to $\group{SO}(1,n)$ or $\group{SU}(1,n)$, and $\pi$ is a representation of $G$ that does not strongly contain the trivial representation, then $\pi$ does not weakly contain the trivial representation. In this case, there is an element $\kappa$ of $(\Lie{a}^*)^+$ such that $\kappa \preceq \rho$ and every $K$-finite matrix coefficient of every unitary representation $\pi$ of $G$ without trivial subrepresentations may be dominated by a multiple of $\phi_{\kappa}$; moreover $\phi_\kappa$ decays exponentially at infinity.
More precisely,
\[
\sup_{k,k' \in K} \labs \lip \pi (k x k') \xi, \eta\rip\rabs
\leq \dim(\Span(\pi(K)\xi))^{1/2} \lnorm \xi \rnorm_{\Hil_\pi} \dim(\Span(\pi(K)\eta))^{1/2}  \lnorm \eta \rnorm_{\Hil_\pi} \phi_\kappa(x)
\]
for all $x \in G$ and all $k,k' \in K$, and for all $\xi, \eta \in \cH^K$ (the space of $K$-finite vectors in $\cH_\pi$).
In particular this holds even on the walls.
This answers a question of R.J. Zimmer (personal communication).

The identification of $\kappa$ was the work of several authors, including R.E. Howe \cite{Ho82} and H. Oh \cite{Oh02}.

\section{Afterthoughts}

The estimates here arise from deep results of Harish-Chandra, Langlands and others, combined with functional analytic techniques.
The results of \cite{CHH89} were used to prove that the algebras $\fnspace{E}_{(\lambda)}(G)$ have faithful representations, but what was actually used is the measure theoretic \emph{principe de majoration} of Herz \cite{Hz70}, not the harder estimates of \cite{C78}.

To explain heuristically why the functional analysis enables us to pass from results that hold in proper subcones of the cone $(\Lie{a}^*)^+$ to results that hold globally, we recall \eqref{eq:SW}:
\[
  \lnorm \pi(f) \rnorm
= \sup_{\xi \in \mathcal{H}_0}  \lim_{n \to \infty}
            \lip  \lpar \pi(f^* * f)^{(*n)}\rpar \xi, \xi \rip^{1/2n}  .
\]
For semisimple Lie groups, we have information on the behaviour of convolution powers of positive $\fnspace{L}^1(G)$-functions, going back to Oseledets \cite{Os68} and Guivarc'h \cite{Gu90}; this can be parlayed into information about convolution powers of arbitrary functions $f^**f$.
In the special case of powers of the heat kernel, this information is very precise: see \cite{AS92}.
The main point is that these powers $(f^**f)^{(*n)}$ concentrate in the radial directions, rather than spreading; indeed, in the Cartan decomposition, the bulk of the mass is near to $K a_n K$, where $a_n$ is a point in $A^+$ of the form $\exp(nH)$; $H$ depends on the ``initial function'' $f^* * f$.

The asymptotic behaviour of spherical functions shows that $\phi(\exp(nH))$ behaves like a finite sum of terms of the form
$p(nH) \expe^{(\alpha-\rho)(nH)}$, where $p$ is a polynomial.
As $(f^**f)^{(*n)}$ ``concentrates'' somewhat, we see heuristically that
\[
\lpar \int_G (f^* * f)^{(*n)} \phi_\lambda(x) \wrt x \rpar^{1/2n}
\]
should converge, and the limit should be expressible in terms of $\alpha$ and $H$.

There are global estimates for the spherical functions $\phi_\lambda$ (see Theorem \ref{thm:NPP}), so Theorem B provides global estimates for matrix coefficients that have positive features of both the asymptotic expansion and the $\fnspace{L}^{q+}(G)$ estimates; in particular, they hold everywhere in $G$.
For many irreducible representations, the spherical function parameter is sharp.
However, it is probably possible but quite difficult to prove sharper results.
In particular, for the group $\group{SL}(2,\R)$, estimates have been proved \cite{BCNT22} for the matrix coefficients of almost all tempered representations in which the polynomial term of $\phi_0$  does not appear.


\begin{thebibliography}{100}


\bibitem[AS82]{AS92}
J.-Ph. Anker and A. G. Setti,
``Asymptotic finite propagation speed for heat diffusion on certain. Riemannian manifolds'',
J. Funct. Anal. 103 (1992), 50–61.

\bibitem[Ar76]{Ar76}
G. Arsac,
``Sur l'espace de Banach engendr\'e par les coefficients d'une representation unitaire'',
\textit{Publ. D\'ept. Math. Lyon} \textbf{13} (1976), 1--101.

\bibitem[BK15]{BK15}
Y. Benoist and T. Kobayashi, ``Tempered reductive homogeneous spaces'',
\emph{J. Eur. Math. Soc. (JEMS)} \textbf{17} (2015), 3015--3036.

\bibitem[BCNT22]{BCNT22}
T. Bruno, M. G. Cowling, F. Nicola and A. Tabacco,
``Estimates for matrix coefficients of representations'',
\textit{Amer. J. Math.} \textbf{144} (2022), 943--965.




\bibitem[C78]{C78}
M. Cowling,
``The Kunze--Stein phenomenon'',
\emph{Ann. Math.} (2) \textbf{107} (1978), 209--234.

\bibitem[C79]{C79}
M. Cowling,
``Sur les coefficients des repr\'esentations unitaires des groupes de Lie simples'' (French), in:
\emph{Analyse harmonique sur les groupes de Lie (S\'em.\ Nancy--Strasbourg 1976--1978), II}, pp. 132-178.
Lecture Notes in Math. 739, Springer, Berlin, 1979.

\bibitem[C83]{C83}
M. Cowling,
``Harmonic analysis on some nilpotent Lie groups (with application to the representation theory of some semisimple Lie groups)'', in:
\emph{Topics in Modern Harmonic Analysis, Vol. I (Turin/Milan, 1982)}, pp. 81--123.
Ist. Naz. Alta Mat. Francesco Severi, Rome, 1983.

\bibitem[CHH88]{CHH89}
M. Cowling, U. Haagerup, and R. Howe,
``Almost $L^2$ matrix coefficients'',
\emph{J. reine angew. Math.} \textbf{387} (1988), 97--110.

\bibitem[Di60]{Di60}
J. Dixmier,
\textit{$C^*$ Algebras}.
North-Holland Publishing Company, Amsterdam--New York--Oxford, 1977.


\bibitem[Ey64]{Ey64}
P. Eymard,
``L'alg\`ebre de Fourier d'un groupe localement compact'',
\textit{Bull. Soc. Math. France} \textbf{92} (1964), 181--236.

\bibitem[Fe60]{Fe60}
J. M. G. Fell,
``The dual spaces of $C^*$-algebras'',
\textit{Trans. Amer. Math. Soc.} \textbf{94} (1960), 365--403.


\bibitem[GGN18]{GGN18}
A. Ghosh, A. Gorodnik and A. Nevo,
``Best possible rates of distribution of dense lattice orbits in homogeneous spaces'',
\textit{J. reine angew. Math.} \textbf{745} (2018), 1555--188.

\bibitem[GN15]{GN15}
A. Gorodnik and A. Nevo,
``Quantitative ergodic theorems and their number-theoretic applications'',
\textit{Bull. Amer. Math. Soc.} \textbf{52} (2015), 65--113.


\bibitem[Gr69]{Gr69}
F. P. Greenleaf,
\textit{Invariant Means on Topological Groups and Their Applications}.
Van Nostrand Reinhold Co., New York--Toronto--London, 1969.

\bibitem[Gu90]{Gu90}
Y. Guivarc'h,
``Produits de matrices al\'eatoires et applications aux propri\'et\'es g\'eometriques des sous-groupes du groupe lin\'eaire'',
\textit{Ergod. Th. \& Dynam. Sys.}  \textbf{10} (1990), 483--512.

\bibitem[Ha15]{Ha15}
B. C. Hall,
\textit{Lie Groups, Lie Algebras, and Representations.
An Elementary  Introduction},
Second Edition.
Graduate Texts in Mathematics 222.
Springer, Cham--Heidelberg--New York--Dordrecht--London, 2015.

\bibitem[He84]{He84}
S. Helgason,
\textit{Groups and Geometric Analysis.
Integral Geometry, Invariant Differential Operators, and Spherical Functions}.
Academic Press, Orlando--San Diego--San Francisco--New York--London--Toronto--Montreal--Sydney--Tokyo--Sao Paulo, 1984.

\bibitem[Hz70]{Hz70}
C. S. Herz,
``Sur la ph\'enom\`ene de Kunze--Stein'',
\textit{C. R. Acad. Sci. Paris}, \textbf{271} (1970), 491--493.

\bibitem[HR63]{HR63}
E. Hewitt and K. A. Ross,
\textit{Abstract Harmonic Analysis.  Volume I}.
Die Grundlehren der mathematischen Wissenschaften, Band 115.
Springer-Verlag, Berlin--Heidelberg--New York, 1963.

\bibitem[Ho82]{Ho82}
R. Howe,
``On a notion of rank for unitary representations of the classical groups'', pp. 223--331 in:
A. Fig\`a-Talamanca (ed.),
\textit{Harmonic Analysis and Group Representations}.
Liguori, Naples, 1982.

\bibitem[Hu72]{Hu72}
J. E. Humphreys,
\textit{Introduction to Lie Algebras and Representation Theory}.
Springer-Verlag, New York--Heidelberg--Berlin, 1972.

\bibitem[Kn86]{Kn86}
A. W. Knapp,
\textit{Representation Theory of Semisimple Groups.
An Overview Based on Examples}.
Princeton University Press, Princeton, 1986.

\bibitem[Ko69]{Ko69}
B. Kostant,
``On the existence and irreducibility of certain series of representations'',
\textit{Bull. Amer. Math. Soc.} \textbf{75}, 627--642.

\bibitem[LS21]{LS21}
T. de Laat and T. Siebenand,
``Exotic group $C^*$-algebras of simple Lie groups with real rank one'',
\textit{Ann. Inst. Fourier}, \textbf{71} (2021),  2117--2136.


\bibitem[MNS00]{MNS00}
G. A. Margulis, A. Nevo and E. M. Stein,
``Analogs of Wiener's ergodic theorems for semisimple Lie groups. II'',
\textit{Duke Math. J.} \textbf{103} (2000), 233--259.


\bibitem[Pi84]{Pi84}
J.-P. Pier,
\emph{Amenable Locally Compact Groups}.
John Wiley \& Sons, New York, 1984.

\bibitem[KS83]{KS83}
A. W. Knapp and B. Speh,
``Status of classification of irreducible unitary representations'', in:
\emph{Harmonic Analysis  (Minnesota, 1981)},
Springer-Verlag, Berlin--Heidelberg--New York, 1982.


\bibitem[NPP14]{NPP14}
E. K. Narayanan, A. Pasquale and S. Pusti,
``Asymptotics of Harish-Chandra expansions, bounded hypergeometric functions associated with root systems, and applications'',
\emph{Adv. in Math.} \textbf{252}  (2014), 227--259.


\bibitem[Oh02]{Oh02}
H. Oh,
``Uniform pointwise bounds for matrix coefficients of unitary representations
and applications to Kazhdan constants'',
\emph{Duke Math. J.} \textbf{113} (2002), 133--192.

\bibitem[Os68]{Os68}
V. I. Oseledets,
``A multiplicative ergodic theorem. Lyapunov characteristic numbers for dynamical systems'',
\textit{Trans. Moscow Math. Soc.} \textbf{19} (1968), 197--231.

\bibitem[SW18]{SW18}
E. Samei and M. Wiersma,
``Exotic $C^*$-algebras of geometric groups'',
\texttt{arXiv:1809.07007}, 22 pages.


\end{thebibliography}
\end{document}